\DeclareMathOperator{\Div}{div}
\newcommand{\AP}{A_{\Vert}}
\newcommand{\ColorWord}[2]{\color{#1} #2 \color{black} }
\numberwithin{equation}{section}
\theoremstyle{plain}
\newtheorem{thm}[equation]{Theorem}
\newcommand{\refthm}[1]{\emph{\ColorWord{blue}{Theorem} \ref{#1}}}
\newtheorem{lemma}[equation]{Lemma}
\newcommand{\reflemma}[1]{\emph{\ColorWord{blue}{Lemma} \ref{#1}}}
\newtheorem{prop}[equation]{Proposition}
\newcommand{\refprop}[1]{\emph{\ColorWord{blue}{Proposition} \ref{#1}}}
\newtheorem{cor}[equation]{Corollary}
\newcommand{\refcor}[1]{\emph{\ColorWord{blue}{Corollary} \ref{#1}}}
\theoremstyle{definition}
\newtheorem{defin}[equation]{Definition}
\newcommand{\refdef}[1]{\emph{Definition \ref{#1}}}
\theoremstyle{remark}
\newtheoremstyle{named}{}{}{\itshape}{}{\bfseries}{}{.5em}{#1 #3}
\theoremstyle{named}
\title{Equivalence between solvability of the Dirichlet and Regularity problem under an \(L^1\) Carleson condition on \(\partial_t A\)}
\author{Martin Ulmer}
\date{\today}
\begin{document}

\maketitle

\begin{abstract}
\noindent
We study an elliptic operator \(L:=\mathrm{div}(A\nabla \cdot)\) on the upper half space. It is known that solvability of the Regularity problem in \(\dot{W}^{1,p}\) implies solvability of the adjoint Dirichlet problem in \(L^{p'}\). Previously, Shen (\cite{shen_relationship_2007}, 2007) established a partial reverse result. In our work, we show that if we assume an \(L^1\)-Carleson condition on only \(|\partial_t A|\) the full reverse direction holds. As a result, we obtain equivalence between solvability of the Dirichlet problem \((D)^*_{p'}\) and the Regularity problem \((R)_p\) under this condition.

As a further consequence, we can extend the class of operators for which the \(L^p\) Regularity problem is solvable by operators satisfying the mixed \(L^1-L^\infty\) condition introduced in \cite{ulmer_solvability_2025}. Additionally in the case of the upper half plane, this class includes operators satisfying this \(L^1\)-Carleson condition on \(|\partial_t A|\).
\end{abstract}

\tableofcontents

\section{Introduction}

In this work, let \(\Omega:=\mathbb{R}^{n+1}_+:=\mathbb{R}^n\times (0,\infty), n\geq 1\) be the upper half space and set \(L:=\Div(A\nabla\cdot)\) as a uniformly elliptic operator with bounded measurable coefficients. More specifically, \(A(x,t)\) is a real not necessarily symmetric \(n+1\) by \(n+1\) matrix and there exists \(\lambda_0>0\) such that
\begin{align}
    \lambda_0 |\xi|^2\leq \xi^T A(x,t) \xi \leq \lambda_0^{-1}|\xi|^2 \qquad \textrm{ for all }\xi\in \mathbb{R}^{n+1},\label{eq:DefinitionOfUniformElliptic}
\end{align}
and a.e. \((x,t)=(x_1,...,x_n,t)\in \mathbb{R}^{n+1}_+\). We are interested in the solvability of the Regularity boundary value problem given by
\[\begin{cases} Lu=\Div(A\nabla u)=0 &\textrm{in }\Omega, \\ \nabla_Tu=\nabla_Tf &\textrm{on }\partial\Omega,\end{cases}\]
with boundary data \(f\) in the homogeneous Sobolev space \(\dot{W}^{1,p}(\partial\Omega)\) (see \refdef{def:L^pregularityProblem}). In contrast to the Dirichlet boundary value problem, where the boundary data belongs to \(L^p\) (see \refdef{def:L^pDirichletProblem}), the Regularity problem focuses on the behavior of the tangential derivative (\(\nabla_T\)) of the solution \(u\) on the boundary.
\medskip

The question we would like to address is quite well established and dates back to Dahlberg (cf. \cite{dahlberg_estimates_1977} and \cite{dahlbert_poisson_1979}). He showed the existence of solutions to the Dirichlet boundary value problem with boundary data in \(L^2\) for the Laplacian on Lipschitz graph domains. Notably, he made the following observation: 
Finding harmonic functions with \(L^2\) boundary data on a Lipschitz graph domain \(\Omega\) for
\[\begin{cases} \Delta u=0 &\textrm{in }\Omega\\ u=f &\textrm{on } \partial\Omega\end{cases}\qquad \textrm{is equivalent to solving } \qquad \begin{cases} Lu=0 &\textrm{in }\mathbb{R}^n\times(0,\infty)\\  u= f &\textrm{on } \mathbb{R}^n\end{cases}\]
for an elliptic operator \(L\) depending on the Lipschitz function of \(\Omega\). This suggests that solving a simpler elliptic PDE on a more complicated Lipschitz graph domain is equivalent to solving a more complicated elliptic PDE on the simpler domain of the upper half space.
Depending on which flattening one uses in the argument, the operator \(L\) has some additional properties. Among the most established ones are the Carleson condition (or also Dahlberg-Kenig-Pipher (DKP) condition) and the \(t-\)independence condition. The literature regarding the DKP condition is extensive, addressing not only the Dirichlet boundary value problem but also a range of different elliptic or parabolic boundary value problems (cf. the survey article \cite{dindos_boundary_2023} and references therein). Similarly, the \(t-\)independent condition yields solvability of various other boundary value problems (cf. \cite{kenig_new_2000}, \cite{kenig_regularity_2008}, \cite{hofmann_regularity_2015}, \cite{hofmann_regularity_2015}, \cite{castro_boundedness_2016}, \cite{nystrom_l2_2017}, \cite{auscher_dirichlet_2018}, \cite{hofmann_dirichlet_2022}). Broadly speaking, the DKP condition controls the Lipschitz constant of the matrix \(A\) close to the boundary and requires it to grow less than the function \(1/t\) close to the boundary. The \(t-\)independence condition on the other hand asserts \(A(x,t)=A(x)\), i.e. that \(A\) is independent in transversal direction with potentially almost arbitrarily bad behavior in any direction parallel to the boundary.

\medskip
Both of these structurally different conditions - the DKP and \(t\)-independent condition - are sufficient for solvability of the Dirichlet and Regularity boundary value problem. This raises the question of whether there are other sufficient conditions or improvements of these two that still allow us to solve the Dirichlet or Regularity boundary value problem. Finding such improvements would reduce the number of operators for which we do not know whether any boundary value problem is solvable. It is also noteworthy that \cite{modica_construction_1980} and \cite{caffarelli_completely_1981} provide examples of linear uniformly elliptic operators for which the Dirichlet boundary value problem is not solvable. Furthermore, the article \cite{kenig_new_2000} demonstrates that if the matrix \(A\) is nonsymmetric, we cannot expect to obtain \(L^2\) solvability for the Dirichlet problem with \(t-\)independent coefficients. In these cases, the best we can hope for is solvability with \(L^p\) data for potentially large \(p\). Given that the solvability range of the regularity problem is dual to that of the Dirichlet problem, we can only expect solvability for the Regularity problem for potentially small \(p>1\) in our context.
\medskip

Although the \(t-\)independent condition for the Dirichlet problem in the symmetric case was well established by Jerison and Kenig via a ``Rellich" identity in \cite{jerison_dirichlet_1981}, the extension from symmetric to nonsymmetric matrices in the Dirichlet and Regularity problem necessitated new tools and took some time. It was only after the Kato conjecture was resovled (cf. \cite{Auscher_Kato}) that the Dirichlet boundary value problem (cf. \cite{hofmann_square_2015}) and Regularity boundary value problem (cf. \cite{hofmann_regularity_2015}) could be proved for nonsymmetric matrices under the \(t-\)independence condition and by use of these Kato tools.
\medskip

In addition to the two previously mentioned conditions, there is also the Dini-condition from \cite{fabes_necessary_1984}, where the authors showed that \(t\)-independence can be relaxed if we have continuous coefficients. More precisely, they assume a symmetric \(A\) with continuous coefficients, a bounded \(C^1\)-domain \(\Omega\), and that the modulus of continuity 
\[\eta(s)=\sup_{P\in \partial\Omega, 0<r<s}|A_{ij}(P-rV(P))-A_{ij}(P)|\]
with outer normal vector field \(V\) satisfies the Dini-type condition
\begin{align}\int_0\frac{\eta(s)^2}{s}ds<\infty.\label{DiniTypeCond}\end{align}
Under these assumptions, they show \(\omega\in B_2(\sigma)\subset A_\infty(\sigma)\), i.e. solvability of the \(L^2\) Dirichlet problem. This Dini-type condition has also been shown to be somewhat necessary in the following sense: For a given function \(\beta\) with \(\int_0 \frac{\beta(s)^2}{s}ds=+\infty\) the article \cite{caffarelli_completely_1981} constructs symmetric continuous matrices with modulus \(\eta(s)\leq \beta(s)\) which admit elliptic measures \(\omega\) that are completely singular with respect to the surface measure. Later, \cite{dahlberg_absolute_1986} extended condition \eqref{DiniTypeCond} to include also symmetric matrices with merely bounded and measurable coefficients by examining perturbations from continuous matrices. Lastly, \cite{alfonseca_analyticity_2011} demonstrates that a sufficiently small \(L^\infty\) perturbation of a symmetric \(t\)-independent matrix still allows the resulting operator to solve the Dirichlet problem. This condition is also a generalization of the \(t-\)independence condition and even applies for complex coefficients, although it is still limited to symmetric matrices. All of these conditions were studied in the context of the Dirichlet but not the regularity problem.

\medskip

Another condition that was studied in \cite{ulmer_solvability_2025} is the mixed \(L^1-L^\infty\) condition which states that
\begin{align}
\int_0^\infty \Vert \partial_t A(\cdot, t)\Vert_{L^\infty}dt<\infty. \label{cond:mixedL1LInftyCond}
\end{align}
From \cite{ulmer_solvability_2025} we know that if \(|\partial_t A|\leq C/t\) and \eqref{cond:mixedL1LInftyCond} is satisfied, then the Dirichlet problem is solvable for some \(1<p<\infty\). It is clear that this condition generalizes the \(t-\)independence condition, but it differs from both the Dini-condition and the condition in \cite{alfonseca_analyticity_2011}. Even if the Dini-condition (or the condition in \cite{alfonseca_analyticity_2011}) serves as sufficient condition for unbounded domains, for nonsymmetric matrices and for the Regularity problem - which is not established yet - there are examples of matrices in \cite{ulmer_solvability_2025} that satisfy either condition but not the other. Interestingly, for \(n=1\), the same methods also lead to an improvement. In the upper half plane, we only need to assume an \(L^1\) Carleson condition on \(|\partial_t A|\) instead of \eqref{cond:mixedL1LInftyCond} (cf. \cite{ulmer_solvability_2025}).
\medskip

Since these new conditions ensure solvability for the Dirichlet problem, it is natural to ask whether they also enable us to solve the Regularity problem. We will derive this result as a consequence of our main theorem.
\medskip

To determine the solvability of either boundary value problem, it can be useful to analyze their relationships. In particular, \cite{kenig_neumann_1993} established that on Lipschitz domains solvability of the Regularity problem \((R)_p\) with data in \(\dot{W}^{1,p}\) implies solvability of the Dirichlet problem \((D)^*_{p'}\) of the adjoint operator with boundary data in \(L^{p'}\) for the dual exponent (see also \cite{dindos_regularity_2012} for the endpoint \(p=1\)). While their result is only formulated for symmetric operators, it is well established that this proof also applies for nonsymmetric operators. In that sense we can say that the Dirichlet problem is generally easier to solve than the Regularity problem. Interestingly, there is a partial reverse result from \cite{shen_relationship_2007} which is also stated for symmetric operators only but also holds for nonsymmetric ones. It states that if the Dirichlet problem \((D)^*_{p'}\) is solvable then either the Regularity problem \((R)_p\) is solvable or \((R)_q\) is not solvable for any \(1<q<\infty\). Which of the two cases is present though depends on the given PDE. For instance, we know that under the DKP condition or the \(t-\)independence condition we are in the second case of solvability of both problems in dual ranges of \(p\) and we have equivalence between solvability of the Dirichlet problem \((D)^*_{p'}\) and the regularity problem \((R)_{p}\). The main result of this work is that this reverse implication also holds if we assume the \(L^1\) Carleson condition on \(|\partial_t A|\), as detailed in \eqref{cond:ExplanationForCarlesonCondition}.

\begin{thm}\label{MAINTHM}
    Assume \(L:=\mathrm{div}(A\nabla \cdot)\) is a uniformly elliptic operator with bounded, merely measurable coefficients and let \(\Omega=\mathbb{R}^{n+1}_+\). Let \(p>1\). If the \(L^{p'}\) Dirichlet problem is solvable for the adjoint \(L^*\), there exists \(C>0\) such that \(|\partial_t A|\leq C/t\) and 
    \begin{align}\sup_{(y,s)\in B(x,t,t/2)}|\partial_t A|\qquad \textrm{ is a Carleson measure},\label{cond:L1CarlesonCondOnPartialtA}\end{align}
    then the Regularity boundary value problem is solvable for \(f\in \dot{W}^{1,p}(\partial\Omega)\).
\end{thm}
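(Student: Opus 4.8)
The plan is to combine Shen's dichotomy with a direct verification that $(R)_q$ is solvable for \emph{some} exponent $q\in(1,\infty)$. Indeed, since $(D)^*_{p'}$ is assumed solvable, \cite{shen_relationship_2007} forces the alternative that either $(R)_p$ is solvable or $(R)_q$ is solvable for no $q\in(1,\infty)$; hence producing a single exponent $q$ with $(R)_q$ solvable rules out the second possibility and yields $(R)_p$ for the $p$ dual to the given $p'$. To produce such a $q$ I would, as usual, first reduce $(R)_q$ to the a priori estimate $\Vert\widetilde N(\nabla u)\Vert_{L^q(\partial\Omega)}\lesssim\Vert\nabla_T f\Vert_{L^q(\partial\Omega)}$ for solutions of $Lu=0$ with, say, compactly supported Lipschitz data $f=u|_{\partial\Omega}$, the passage from this a priori estimate to genuine solvability being handled by the standard approximation of $A$ by coefficients for which existence is classical (e.g.\ smooth coefficients obtained by mollification at the scale of $t$), which preserves the bound $|\partial_t A|\le C/t$ and the Carleson condition with uniform constants.

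The a priori estimate is the heart of the matter, and both hypotheses enter there. The mechanism I have in mind uses two ingredients. The first is a base estimate --- say the $L^2$, or an $L^{q_0}$ with $q_0$ small, version --- obtained via a Rellich-type identity: one uses that $v:=\partial_t u$ satisfies $Lv=-\Div\big((\partial_t A)\nabla u\big)$, so that $v$ is a solution up to a divergence-form error whose density is pointwise $\le(C/t)|\nabla u|$ and carries the Carleson measure $|\partial_t A|\,dX$; testing the equation for $u$ against $\partial_t u$ times a cut-off and integrating by parts then produces an identity in which the only genuinely dangerous bulk term is $\iint|\partial_t A|\,|\nabla u|^2\,dX$, which by the Carleson embedding theorem is bounded, locally, by the Carleson norm of $|\partial_t A|$ times $\Vert\widetilde N(\nabla u)\Vert_{L^2}^2$ (the remaining terms live on $\partial\Omega$ or are supported away from the region of interest, where the derivative of the cut-off helps); the pointwise bound $|\partial_t A|\le C/t$ is exactly what makes the interior estimates for $u$ and $\partial_t u$ scale invariant throughout this manipulation. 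The second ingredient is Shen's real-variable machinery: the solvability of $(D)^*_{p'}$ supplies weak reverse Hölder / self-improvement estimates for $\nabla u$ (in the spirit of \cite{shen_relationship_2007}), which, combined with the base estimate, upgrade it to the $L^q$ bound above, with $q$ in a range tied to the reverse Hölder exponent of $\omega_{L^*}$.

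The principal obstacle I anticipate is that the Carleson norm of $|\partial_t A|$ is only finite, not small, so the error term in the Rellich-type identity cannot simply be absorbed into the left-hand side. The standard remedy --- a stopping-time decomposition of $\Omega$ into sawtooth subregions on which this norm is as small as desired, proving the base estimate on each and reassembling --- together with the need to carry everything out in $L^q$ rather than $L^2$, the bookkeeping of the normal component of $\nabla u$ on $\partial\Omega$ (which is not prescribed by the Regularity data), and the complete absence of any control on the tangential oscillation of $A$, is where the technical difficulty concentrates. Once the a priori estimate is established, existence follows by approximation, and the theorem follows from Shen's dichotomy as described above.
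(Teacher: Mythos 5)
Your reduction via Shen's dichotomy is sound as far as it goes: given $(D)^*_{p'}$, exhibiting a single exponent $q\in(1,\infty)$ with $(R)_q$ solvable would indeed force $(R)_p$. But the route you propose to that base estimate has a genuine gap. A Rellich-type identity obtained by testing $Lu=0$ against $\partial_t u$ requires the symmetry of $A$ to convert $A\nabla u\cdot\nabla\partial_t u$ into a full $t$-derivative of $A\nabla u\cdot\nabla u$ (up to the $(\partial_t A)\nabla u\cdot\nabla u$ error); for the nonsymmetric matrices allowed in the theorem this manipulation is unavailable, and this is not a technicality: already in the $t$-independent case ($\partial_t A\equiv 0$, so both hypotheses hold with constant zero) the $L^2$ Regularity estimate can fail for nonsymmetric coefficients (cf.\ the discussion around \cite{kenig_new_2000}), and the known proof of $(R)_q$ for some $q$ in that setting is the Kato/square-function machinery of \cite{hofmann_regularity_2015}, not an integration-by-parts identity. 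Hence no argument that uses only the stated hypotheses can deliver your $L^2$ (or ``small $q_0$'' Rellich-flavored) base estimate. Two further steps of your scheme also do not close: the Carleson norm of $\sup_{B(x,t,t/2)}|\partial_t A|$ is finite but not small, and it does not become small on sawtooth or stopping-time subregions, so the absorption of $\iint|\partial_t A|\,|\nabla u|^2$ has no evident fix; and the boundary term produced by the Rellich computation contains the (unprescribed) normal derivative, which you cannot trade for tangential data without symmetry/$t$-independence.

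For comparison, the paper does not pass through Shen's dichotomy or any estimate on $\partial_t u$ at all. It proves the $(R)_p$ bound directly by dualizing $\tilde N(\nabla u)$ against the Kenig--Pipher quasi-dualizing function $h$ of \cite{kenig_neumann_1995}, solving the adjoint Poisson problem $L^*v=\mathrm{div}(h)$, and using the $(D)^*_{p'}$ hypothesis only through square-function and nontangential bounds on $v$. Repeated integration by parts then reduces everything to $L^p$ area-function bounds for the approximation operator $\mathcal{P}_t=e^{-t^2L_{||}^t}$ (namely for $\nabla_{||}\mathcal{P}_t f$, $\partial_t\mathcal{P}_t f$, $t\nabla_{||}\partial_t\mathcal{P}_t f$), which are proved by splitting $\partial_t\mathcal{P}_t=W_1+W_2$, establishing Carleson-function ($L^\infty$) and Hardy--Sobolev-atom to $L^1$ endpoint bounds with semigroup kernel and off-diagonal estimates, and concluding by real interpolation; the $L^1$ Carleson condition on $\partial_t A$ enters through Carleson-measure/nontangential duality. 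This is precisely the technology that substitutes for the unavailable Rellich identity, so to repair your proposal you would need to replace the base-estimate step by an argument of this (or the \cite{hofmann_regularity_2015}) type rather than by absorption of the Carleson term.
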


Since \cite{ulmer_solvability_2025} established solvability of the Dirichlet problem for some \(1<p<\infty\) under the \(L^1-L^\infty\) condition \eqref{cond:mixedL1LInftyCond} for all dimensions \(n\geq 2\) and under the \(L^1\) Carleson condition on \(|\partial_t A|\) for \(n=1\), we can conclude solvability of the Regularity problem as presented in the following two corollaries.

\begin{cor}\label{cor:MainCor1}
     Assume \(L:=\mathrm{div}(A\nabla \cdot)\) is a uniformly elliptic operator with bounded, merely measurable coefficients and let \(\Omega=\mathbb{R}^{n+1}_+\). If there exists \(C>0\) such that \(A\) satisfies \(|\partial_t A|\leq C/t\) and \eqref{cond:mixedL1LInftyCond}, then there exists \(p>1\) such that \eqref{eq:RPinequalityElliptic} holds and hence the Regularity boundary value problem is solvable for \(f\in \dot{W}^{1,p}(\partial\Omega)\).
\end{cor}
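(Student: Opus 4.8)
The plan is to reduce to \refthm{MAINTHM} by first manufacturing solvability of a Dirichlet problem for the adjoint operator. The adjoint $L^{*}=\Div(A^{T}\nabla\cdot)$ has coefficient matrix $A^{T}$, which satisfies \eqref{eq:DefinitionOfUniformElliptic} with the same ellipticity constant $\lambda_{0}$; moreover $\partial_{t}(A^{T})=(\partial_{t}A)^{T}$, so $|\partial_{t}(A^{T})|=|\partial_{t}A|\le C/t$ and $\int_{0}^{\infty}\|\partial_{t}(A^{T})(\cdot,t)\|_{L^{\infty}}\,dt=\int_{0}^{\infty}\|\partial_{t}A(\cdot,t)\|_{L^{\infty}}\,dt<\infty$. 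Hence $A^{T}$ lies in the scope of the Dirichlet solvability result of \cite{ulmer_solvability_2025}, and there is some $p_{0}\in(1,\infty)$ for which the $L^{p_{0}}$ Dirichlet problem is solvable for $L^{*}$. I would then put $p:=p_{0}'$ and aim to establish \eqref{eq:RPinequalityElliptic} for this exponent.

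At that point the Dirichlet hypothesis of \refthm{MAINTHM} and the bound $|\partial_{t}A|\le C/t$ are in hand, and only the transversal-regularity input is missing: it is supplied here as the mixed condition \eqref{cond:mixedL1LInftyCond} rather than the $L^{1}$-Carleson condition \eqref{cond:L1CarlesonCondOnPartialtA}. To invoke \refthm{MAINTHM} as a black box one would need \eqref{cond:mixedL1LInftyCond} to imply \eqref{cond:L1CarlesonCondOnPartialtA}, which (with the natural normalisation of the latter) it does not --- even granted $|\partial_{t}A|\le C/t$ the two conditions are incomparable, as one sees by taking a matrix whose $\|\partial_{t}A(\cdot,t)\|_{L^{\infty}}$ is comparable to $1/t$ on $t$-intervals of length $\sim 2^{k}/k^{2}$ centred at $t\sim 2^{k}$ and $0$ elsewhere. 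I would therefore reopen the proof of \refthm{MAINTHM} and locate every use of \eqref{cond:L1CarlesonCondOnPartialtA}. I expect these to occur only in the estimates of the error terms recording the failure of $A$ to be $t$-independent --- integrals that are \emph{first order} in $\partial_{t}A$ and are controlled by a Carleson-measure/square-function bound paired with a non-tangential maximal function. Each such term is equally controlled by $\int_{0}^{\infty}\|\partial_{t}A(\cdot,t)\|_{L^{\infty}}\,dt$ times that maximal function, which is precisely the estimate driving \cite{ulmer_solvability_2025}; feeding these bounds into the argument of \refthm{MAINTHM} should then produce \eqref{eq:RPinequalityElliptic} for $p=p_{0}'$, and hence solvability of the Regularity problem for $f\in\dot{W}^{1,p}(\partial\Omega)$.

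Conceptually, both routes amount to comparing $L$ with the $t$-independent operator $\Div(A(\cdot,0)\nabla\cdot)$, whose boundary trace $A(\cdot,0)=\lim_{t\to 0^{+}}A(\cdot,t)$ exists in $L^{\infty}$ under \eqref{cond:mixedL1LInftyCond} and whose Regularity problem is solvable in some range of exponents by the $t$-independent theory (\cite{hofmann_regularity_2015}); together with Shen's dichotomy (\cite{shen_relationship_2007}), which reduces matters to solvability of $(R)_{q}$ for \emph{some} $q$, this gives an alternative way to organise the proof. The main obstacle I anticipate is exactly the bookkeeping flagged above --- checking that no appeal to \eqref{cond:L1CarlesonCondOnPartialtA} in the proof of \refthm{MAINTHM} secretly relies on the quadratic, self-improving (stopping-time) character of a Carleson condition, so that each can be traded for the scale-invariance-free bound supplied by \eqref{cond:mixedL1LInftyCond}. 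I expect no essential difficulty, following the estimates already present in \cite{ulmer_solvability_2025}; the cleanest packaging would be to isolate from the proof of \refthm{MAINTHM} an intermediate statement asserting that solvability of $(D)^{*}_{p'}$ together with $|\partial_{t}A|\le C/t$ and a first-order transversal control on $\partial_{t}A$ imply $(R)_{p}$, of which both \refthm{MAINTHM} and the present Corollary are instances.
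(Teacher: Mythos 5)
Your opening reduction is exactly the paper's proof of this corollary: the paper offers nothing beyond applying the Dirichlet result of \cite{ulmer_solvability_2025} (the hypotheses $|\partial_tA|\le C/t$ and \eqref{cond:mixedL1LInftyCond} are invariant under transposition, so it applies to $L^{*}$) and then invoking \refthm{MAINTHM} with the dual exponent $p=p_0'$. Your objection to the black-box step is also well taken: with the Whitney-sup normalisation \eqref{cond:ExplanationForCarlesonCondition}, condition \eqref{cond:mixedL1LInftyCond} together with $|\partial_tA|\le C/t$ does \emph{not} imply \eqref{cond:L1CarlesonCondOnPartialtA}, and your spike construction is correct: spikes of height $\sim 2^{-k}$ and width $2^{k}/k^{2}$ at $t\sim 2^{k}$ make the mixed integral $\sum_k k^{-2}$ finite, while $\sup_{B(x,t,t/2)}|\partial_tA|\gtrsim 2^{-k}$ for $t$ in an interval of length $\sim 2^{k}$, so the Carleson average over a ball of radius $2^{K}$ grows like $K$. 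So the corollary is not a purely formal consequence of the statement of \refthm{MAINTHM}; what the paper leaves implicit is that its estimates also run when the Carleson/nontangential pairing is replaced by the $L^1$--$L^\infty$ bound.

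That said, as a proof your proposal stops precisely where the work lies: the claim that every appeal to \eqref{cond:L1CarlesonCondOnPartialtA} can be traded for \eqref{cond:mixedL1LInftyCond} is asserted ("I expect no essential difficulty") rather than verified. Concretely, $\Vert\sup_{B(x,t,t/2)}|\partial_tA|\Vert_{\mathcal{C}}$ is paired with the mean-valued nontangential maximal function through \eqref{eq:DualityCarelsonNormNontangential} in the stopping-time estimate of $III$ in Section \ref{section:MainProof}, in term $II$ of \reflemma{lemma:PointwiseCarlesonFunctionBoundW_2}, in $J_2^{s,Q}$ of \refprop{prop:L^2SquareFctBoundNablaP} and in $II_2$ of \refprop{prop:L^2SquareFctBoundW_1}, and a Whitney $L^\infty$ bound on $\partial_t\AP$ also enters the Cacciopolli-type inequality \refprop{prop:CacciopolliTypeInequality}; in each instance one must check that the factor multiplying $|\partial_tA|$ has spatial $L^1$ norm controlled uniformly in the transversal variable, so that Fubini against $\int_0^\infty\Vert\partial_tA(\cdot,s)\Vert_{L^\infty}\,ds$ closes the estimate, and that the substitutes still feed the $L^\infty$/Carleson and Hardy--Sobolev/$L^1$ endpoints of the interpolation scheme in Section \ref{section:RealInterpolation}. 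None of this is carried out in the proposal. Your alternative organisation is also not a shortcut: Shen's dichotomy \cite{shen_relationship_2007} requires solvability of $(R)_q$ for \emph{some} $q$ as input, which for these operators is exactly what is at stake, and comparing $L$ with $\mathrm{div}(A(\cdot,0)\nabla\cdot)$ would need a perturbation theorem you do not supply. So either you read the corollary as the paper does -- a direct application of \refthm{MAINTHM}, in which case your argument coincides with the paper's one-line deduction but your own counterexample shows an additional justification is needed -- or you must actually execute the replacement argument, which the proposal only sketches.
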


\begin{cor}\label{cor:MainCor2}
    Assume \(L:=\mathrm{div}(A\nabla \cdot)\) is a uniformly elliptic operator with bounded, merely measurable coefficients and let \(\Omega=\mathbb{R}^{2}_+\). If there exists \(C>0\) such that \(|\partial_t A|\leq C/t\) and 
    \[\sup_{(y,s)\in B(x,t,t/2)}|\partial_t A|\qquad \textrm{ is a Carleson measure}\]
    then there exists \(p>1\) such that \eqref{eq:RPinequalityElliptic} holds and hence the Regularity boundary value problem is solvable for \(f\in \dot{W}^{1,p}(\partial\Omega)\).
\end{cor}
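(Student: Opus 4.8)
The plan is to obtain Corollary \ref{cor:MainCor2} by combining Theorem \ref{MAINTHM} with the solvability of the Dirichlet problem established in \cite{ulmer_solvability_2025} for the upper half plane. The only point that requires a moment's care is that Theorem \ref{MAINTHM} takes as hypothesis solvability of the Dirichlet problem for the \emph{adjoint} operator \(L^*\), whereas the data we are given is a condition on \(A\) itself; so the first step is to check that the structural hypothesis is stable under passing to the adjoint.

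To this end I would note that \(L^*=\mathrm{div}(A^T\nabla\cdot)\) has coefficient matrix \(A^T\), which is uniformly elliptic with the same constant \(\lambda_0\) as \(A\) (only the symmetric part of \(A\) enters \eqref{eq:DefinitionOfUniformElliptic}, and \(A\) and \(A^T\) share it) and satisfies \(\partial_t(A^T)=(\partial_t A)^T\), whence \(|\partial_t(A^T)|=|\partial_t A|\) pointwise. Consequently \(A^T\) also obeys \(|\partial_t A^T|\le C/t\) and the property that \(\sup_{(y,s)\in B(x,t,t/2)}|\partial_t A^T|\) is a Carleson measure, with the same constants. Next I would invoke \cite{ulmer_solvability_2025} in the case \(n=1\): for an operator on \(\mathbb{R}^2_+\) whose coefficient matrix satisfies \(|\partial_t A|\le C/t\) together with the \(L^1\) Carleson condition on \(|\partial_t A|\), the \(L^q\) Dirichlet problem is solvable for some \(q\in(1,\infty)\). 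Applying this to \(L^*\) (legitimate by the previous paragraph) produces some \(q>1\) for which the \(L^q\) Dirichlet problem is solvable for \(L^*\); setting \(p:=q'\in(1,\infty)\) we conclude that the \(L^{p'}\) Dirichlet problem is solvable for \(L^*\).

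With this \(p>1\) fixed, all hypotheses of Theorem \ref{MAINTHM} hold: the \(L^{p'}\) Dirichlet problem for \(L^*\) is solvable, \(|\partial_t A|\le C/t\), and \eqref{cond:L1CarlesonCondOnPartialtA} holds for \(A\). Theorem \ref{MAINTHM} then gives solvability of the Regularity problem with data in \(\dot W^{1,p}(\partial\Omega)\); in particular the estimate \eqref{eq:RPinequalityElliptic} holds for this \(p\), which is exactly the assertion. I do not anticipate any substantive obstacle here: the entire mathematical content is carried by Theorem \ref{MAINTHM} and by \cite{ulmer_solvability_2025}, and the only things to handle carefully are the adjoint-invariance of the Carleson hypothesis and the matching of dual exponents. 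Corollary \ref{cor:MainCor1} would be obtained identically, using in place of the planar Dirichlet result the Dirichlet solvability of \cite{ulmer_solvability_2025} under the mixed condition \eqref{cond:mixedL1LInftyCond}.
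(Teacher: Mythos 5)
Your proposal is correct and coincides with the paper's (implicit) argument: the corollary is obtained exactly by applying the \(n=1\) Dirichlet solvability result of \cite{ulmer_solvability_2025} to \(L^*\) — noting, as you do, that the hypotheses \(|\partial_t A|\le C/t\) and the Carleson condition are invariant under transposition — and then invoking \refthm{MAINTHM} with the dual exponent. Your explicit check of adjoint-invariance and exponent duality is the only content beyond what the paper states, and it is handled correctly.
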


Since the mixed \(L^1-L^\infty\) condition \eqref{cond:mixedL1LInftyCond} implies the \(t-\)independent condition, \refcor{cor:MainCor1} is also an alternative proof of solvability of the Regularity problem for \(t\)-independent operators. This result has been proved in \cite{hofmann_regularity_2015} and our proof only relies on integration by parts techniques and all the tools the resolved Kato conjecture provides including the semi group theory, but does not need single layer potentials like \cite{hofmann_regularity_2015}.
\medskip

\subsection*{Overview over the proof}

The main proof is provided in Section \ref{section:MainProof}. This section demonstrates how the proof of \refthm{MAINTHM} reduces to demonstrating area function bounds for the approximation operator \(\mathcal{P}_tf:=e^{-t^2L^t_{||}}f\). Specifically, for \(p>1\), which is derived from the solvability of the Dirichlet problem, we need to establish \(L^p\) norm bounds of the area functions of \(\nabla_{||}\mathcal{P}_tf, \partial_t\mathcal{P}f\), and \(t\nabla_{||}\partial_t\mathcal{P}_tf\) in terms of the \(L^p\) norm of the gradient of the boundary data \(f\).
\smallskip

Before we continue, we would like to point out that, if we choose the matrix \(A\) to be \(t-\)independent and we choose \(\mathcal{P}_tf:=e^{-t^2L_{||}}\), we can read the main proof in Section \ref{section:MainProof} as an alternative version of the proof in \cite{hofmann_regularity_2015} because almost all the necessary area function bounds in \(L^p\) have already been established for this choice of \(\mathcal{P}_t\). The exception is that - to the best of the author's knowledge - the area function bounds on \(\nabla_{||}\mathcal{P}_t\) have not appeared before and were circumvented by other estimates provided by the proof of the Kato conjecture.
\smallskip

In our case, however, establishing these \(L^p\) area function bounds is the central component of this work. To do this, we are going to split \(\partial_t\mathcal{P}_t\) into the sum of two different operators \(W_1f\) and \(W_2f\) and examine the area function expressions separately (see Section \ref{section:rho}). Our goal is to apply the real interpolation method to get \(L^p\) bounds for the area function expressions. This reduces matters to proving Hardy-Sobolev space to \(L^1\) boundedness of the area functions and boundedness of the corresponding Carleson function in \(L^\infty\). The real interpolation method and its application in our case are presented in Section \ref{section:RealInterpolation}. The remaining two sections now deal with proving all the remaining area functions bounds on \(\nabla_{||} \mathcal{P}_t, W_1\) and \(W_2\) respectively.
\smallskip

We begin in Section \ref{section:W2} by proving a pointwise bound of the Carleson function of \(W_2\). This result implies the needed \(L^\infty\) bound for the Carleson function, as well as an \(L^\beta\) to \(L^\beta\) bound for the area function of \(W_2\) for \(\beta>2\). This \(L^\beta\) bound is then utilized to derive the Hardy-Sobolev to \(L^1\) bound for the area function of \(W_2\). With these bounds in hand, real interpolation yields all \(L^p\) to \(L^p\) bounds for the area function expressions involving \(W_2\). 
\smallskip

In Section \ref{section:W1}, we turn our attention to the bounds for \(\nabla_{||}\mathcal{P}_t\) and \(W_1\) whose proofs require all the bounds established in the previous section. We will mostly deal with \(\nabla_{||} \mathcal{P}_t\) and \(W_1\) simultaneously, since their proofs are very similar and use the same tools. First, we establish \(L^2\) to \(L^2\) bounds of the area functions of \(\nabla_{||}\mathcal{P}_t\) and \(W_1\) which rely on the previously established \(L^2\) to \(L^2\) bound of the area function of \(W_2\). Next, we prove the Carleson function bound for \(\nabla_{||}\mathcal{P}_t\) and \(W_1\), and the Hardy-Sobolev to \(L^1\) bound of the area functions of \(\nabla_{||}\mathcal{P}_t\) and \(W_1\); both of these results depend on the \(L^2\) to \(L^2\) bound of the area function of \(\nabla_{||}\mathcal{P}_t\) or \(W_1\) respectively. Finally, we use real interpolation to establish the full range of \(L^p\) to \(L^p\) boundedness for the area function expressions involving \(\nabla_{||}\mathcal{P}_t\) and \(W_1\).
\smallskip

We provide this overview of all significant steps to highlight the dependencies between the proofs of these bounds. In particular, understanding the bounds for \(W_1\) requires the full comprehension of the bounds on \(W_2\) first.

\section*{Acknowledgements}
The author wishes to express sincere gratitude to Jill Pipher and
Martin Dindo\v{s} for their valuable and
insightful discussions on this subject,
which significantly contributed to the
refinement of the applied methods
\section{Preliminaries}

\subsection{Notation and Setting}
We will work on the upper half space \(\Omega=\mathbb{R}^{n+1}_+=\mathbb{R}^n\times (0,\infty)\), where we call the last component the \(t\)-direction, or \(t-\)component, and use the following definitions:
\begin{itemize}
    \item A boundary cube centered at \(x\in\partial\Omega\) with radius \(r\) is denoted by \(\Delta_r(x)=\Delta(x,r):=B(x,r)\cap\partial\Omega\);
    \item A Carleson region over a boundary ball is written as \(T(\Delta_r(x)):=B(x,r)\cap\Omega\);
    \item The surface measure is the \(n\)-dimensional Hausdorff measure restricted on the boundary of the domain and denoted by \(\sigma:=\mathcal{H}^{n}|_{\partial\Omega}\);
    \item The matrix \(A(x,t)\in\mathbb{R}^{(n+1)\times(n+1)}\) is bounded and elliptic, i.e. there exists \(\lambda>0\) with
    \[\lambda|\xi|^2\leq \xi^T A(x,t)\xi\leq |\xi|^2\qquad\textrm{ for every }\xi\in \mathbb{R}^{n+1}, \textrm{ and a.e. }(x,t)\in\Omega.\]
    Here \(A\) is potentially nonsymmetric, and we set the different components as
    \[A(x,t)=\begin{pmatrix} \AP(x,t) & b(x,t)\\c(x,t) & d(x,t)\end{pmatrix},\]
    where \(\AP(x,t)\in\mathbb{R}^{n\times n}, c(x,t)\in\mathbb{R}^{1\times n}, b(x,t)\in\mathbb{R}^{ n\times 1}, d(x,t)\in\mathbb{R}\). All of these components are functions in \((x,t)\) that area merely measurable. Then we call \(L:=\mathrm{div}(A\nabla \cdot)\) and elliptic operator and \(L^*:=\mathrm{div}(A^*\nabla \cdot)\) its adjont operator, where \(A^*\) is the transpose of \(A\).
    To formulate the \(L^1\)-Carelson condition \eqref{cond:L1CarlesonCondOnPartialtA} we also assume that each component is weakly differentiable in \(t\) almost everywhere. 
    \item The set \(\mathcal{D}_k(\Delta):=\{Q_l;l\}\) is a dyadic decomposition of a boundary ball \(\Delta\) which consists of a family of boundary balls \(Q_l\subset 3\Delta\) with size comparable to \(2^{-k}\), with finite overlap and such that they cover \(\Delta\), i.e. \(\chi_\Delta\leq \sum_{l}\chi_{Q_l}\leq N\chi_{3\Delta}\) for some fixed \(N\) independent of scale \(k\); 
    \item We define the family of elliptic operators \((L^t_{||})_{t>0}=(\mathrm{div}_{||}(\AP(\cdot,t)\nabla_{||} \cdot))_{t>0}\), where the subscript \(||\) means that we take the gradient or divergence only with respect to the first \(n\) components or the \(x-\)components only. If clear from context, we might also write \(L^t=L_{||}^t\) dropping the subscript. Let us also note here that \(L_{||}^t\) is an operator on functions \(v:\mathbb{R}^n\to\mathbb{R}\);
    \item A nontangential cone with aperture \(\alpha>0\) is given by \[\Gamma_\alpha(x):=\{(y,t)\in\Omega; |x-y|<\alpha t\},\]
    and \(\Gamma_\alpha^\tau(x):=\Gamma_\alpha(x)\cap\{(y,t)\in\Omega; t\leq \tau\}\) denotes the at height \(\tau>0\) truncated cone;
    \item The mean-valued nontangential maximal function is defined as \[\tilde{N}_\alpha^{(p)}(F)(x):=\sup_{(y,s)\in\Gamma_\alpha(x)}\Big(\fint_{B(y,s,s/2)}|F(z,\tau)|^pdzd\tau\Big)^{1/p}, \]
    with \(p\geq 1\). If we drop the superscript \(p\) we mean \(p=1\). Furthermore, we set the at height \(\tau>0\) truncated version as 
    \[\tilde{N}_\alpha^\tau(F)(x):=\sup_{(y,s)\in\Gamma^\tau_\alpha(x)}\fint_{B(y,s,s/2)}|F(z,\tau)|dzd\tau, \]
    and the away truncated version for \(\tau>0\) as
    \begin{align}\tilde{N}_{\alpha,\tau}(F)(x):=\sup_{(y,s)\in\Gamma_\alpha(x)\cap\{(y,s)\in \Omega; s\geq \tau \}}\fint_{B(y,s,s/2)}|F(z,\tau)|dzd\tau; \label{def:NontangentialMaximalFucntionTruncatedAway}\end{align}
    \item The area function of \(F\in L^2_{loc}\) is set as
    \begin{align}\mathcal{A}_\alpha(F)(x):=\Big(\int_{\Gamma_\alpha(x)}\frac{|F(x,t)|^2}{t^{n+1}}dxdt\Big)^{1/2},\label{recall:areafunctiondef}\end{align}
    and the square function is set as \(S(F)(x):=\mathcal{A}(tF)(x)\);
    \item The Carleson function is defined by
    \begin{align}\mathcal{C}(f)(x):=\Big(\sup_{r>0}\frac{1}{|\Delta(x,r)|}\int_{T(\Delta(x,r))}\frac{|f(y,t)|^2}{t}dydt\Big)^{1/2}\label{recall:carelsonfunctiondef}\end{align}
    for \(x\in \partial\Omega=\mathbb{R}^n\);
    \item We say that a measure \(\mu:\Omega\to  [0,\infty]\) is a \textit{Carleson measure} if there exists a \(C>0\) such that for every boundary ball \(\Delta\)
    \[\mu(T(\Delta))\leq C\sigma(\Delta).\]
    The smallest such constant \(C\) is also called the Carleson norm of \(\mu\) and denoted by \(\Vert \mu\Vert_{\mathcal{C}}\). Hence the \(L^1\)-Carleson condition \eqref{cond:L1CarlesonCondOnPartialtA} means that 
    \begin{align}
        \sup_{\Delta\subset\partial\Omega\textrm{ boundary ball}}\frac{1}{\sigma(\Delta)}\int_{T(\Delta)}\sup_{B(x,t,t/2)}|\partial_t A| dxdt\leq C<\infty.\label{cond:ExplanationForCarlesonCondition}
    \end{align}

    An important property of the Carleson measure is that for every function \(F:\mathbb{R}^{n+1}_+\to\mathbb{R}\), with \(\tilde{N}(F)\in L^1(d\sigma)\) we have
    \begin{align}
        \int_{\mathbb{R}^{n+1}_+}|F(x,t)|d\mu\leq C\Big\Vert \sup_{B(x,t,t/2)}|\partial_t A| dxdt\Big\Vert_\mathcal{C}\int_{\mathbb{R}^{n}}\tilde{N}(F)(x)dx.\label{eq:DualityCarelsonNormNontangential}
    \end{align}
    This property can be formulated also more generally and in an \(L^p\)-version and can be found for the nontangential maximal function in Proposition 3 in \cite{coifman_new_1985}, or Proposition 3.11 or Corollary 3.12 in \cite{milakis_harmonic_2013}, and for the mean-valued nontangential maximal function in Lemmas A.18 and A.20 in \cite{mourgoglou_lp-solvability_2022}.
\end{itemize}

Let us define the Dirichlet boundary value problem with boundary data in \(L^p\).

\begin{defin}[\((D)_p^L\)]\label{def:L^pDirichletProblem}
    We say the \textit{\(L^p\) Dirichlet boundary value problem} is solvable for \(L\) if for all boundary data for \(f\in C^\infty_C(\Omega)\cap L^p(\partial\Omega)\) the unique solution \(u\in W_{loc}^{1,2}(\Omega)\) of
    \[\begin{cases}
        Lu=0&\Omega,
        \\
        u=f&\partial\Omega,
    \end{cases}\]
    satisfies
    \begin{align}\Vert \tilde{N}(u)\Vert_{L^p(\partial\Omega)}\lesssim\Vert f\Vert_{\dot{L}_{1}^p(\partial\Omega)},\label{eq:DPinequalityElliptic}\end{align}
    where the implied constants are independent of \(u\) and \(f\). In this case we also write that \((D)^L_p\) holds for \(L\). For the adjoint operator we also write in short \((D)^{L^*}_p=(D^*)_p\).
\end{defin}

Furthermore, we call a Borel function \(g:\partial\Omega\to\mathbb{R}\) a \textit{Haj\l{}asz upper gradient} of \(f:\partial\Omega\to\mathbb{R}\) if
\[|f(X)-f(Y)|\leq |X-Y|(g(X)+g(Y))\qquad\textrm{for a.e. }X,Y\in \partial\Omega.\]
We denote the collection of all Haj\l{}asz upper gradients of \(f\) as \(\mathcal{D}(f)\) and define \(\dot{L}_1^p(\partial\Omega)\) by all \(f\) with
\[\Vert f\Vert_{\dot{L}^p_1(\partial\Omega)}:= \inf_{g\in\mathcal{D}(f)}\Vert g\Vert_{L^p(\partial\Omega)}<\infty.\]
This space is also called \textit{homogeneous Haj\l{}asz Sobolev space}. In the case of a flat boundary we have that \(\dot{L}_1^p(\mathbb{R}^n)=\dot{W}^{1,p}(\mathbb{R}^n)\) with comparable norms (see \cite{mourgoglou_regularity_2023}). 

We can define the Regularity problem with boundary data in \(\dot{L}_1^p(\partial\Omega)\) but in our case this could also be done equivalently with \(\dot{W}^{1,p}\).

\begin{defin}[\((R)_p^L\)]\label{def:L^pregularityProblem}
    We say the \textit{\(L^p\) Regularity boundary value problem} is solvable for \(L\) if for all boundary data for \(f\in C^\infty_C(\Omega)\cap \dot{L}^p_{1}(\partial\Omega)\) the unique solution \(u\in W_{loc}^{1,2}(\Omega)\) of
    \[\begin{cases}
        Lu=0&\Omega,
        \\
        u=f&\partial\Omega,
    \end{cases}\]
    satisfies
    \begin{align}\Vert \tilde{N}(\nabla u)\Vert_{L^p(\partial\Omega)}\lesssim\Vert f\Vert_{\dot{L}_{1}^p(\partial\Omega)},\label{eq:RPinequalityElliptic}\end{align}
    where the implied constants are independent of \(u\) and \(f\). In this case we also write that \((R)^L_p\) holds for \(L\).
\end{defin}

\subsection{The approximation operator \(\mathcal{P}_t\)}\label{section:rho}
Recall that  \(L^t:=L_{||}^t:=\mathrm{div}_{||}(\AP(x,t)\nabla_{||}\cdot)\) is an elliptic operator on the right upper block \(\AP\) for each fixed \(t>0\). For this family of operators we can define the approximation operator \[\mathcal{P}_t:=e^{-t^2L_{||}^t}.\]
Without the dependence of the operator \(L_{||}^t\) on \(t\), this \(\mathcal{P}_t\) is the ellipticized heat semigroup, and solves the heat equation on the upper half space if \(t^2\) is replaced by \(t\) . Since this operator does not satisfy any PDE directly, we can decouple the dependencies in \(t\) and define
\[Wf(x,t,s):=e^{-tL_{||}^s}f(x)\]
as the solution to the ("t-independent") heat equation
\[\begin{cases} \partial_t Wf(x,t,s)-L_{||}^s Wf(x,t,s)=0, &(x,t)\in \Omega, \\ Wf(x,0,s)=f(x), &x\in\partial\Omega\end{cases}\]
for fixed \(s>0\). Taking the partial derivative of \(Wf\) in \(s\) yields
\[\begin{cases} \partial_t \partial_sWf(x,t,s)-L_{||}^s \partial_s Wf(x,t,s)=\mathrm{div}_{||}(\partial_s \AP(x,s)\nabla_{||}Wf(x,t,s)), &(x,t)\in \Omega, \\ Wf(x,0,s)=0, &x\in\partial\Omega.\end{cases}\]
By Duhamel's principle, we can obtain an explicit formula for \(\partial_s Wf(x,t,s)\). Hence we obtain
\begin{align*}
\partial_t\mathcal{P}_tf(x)&=[\partial_tWf(x,t^2,s) + \partial_sWf(x,t^2,s)]|_{s=t}
\\
&=2tL_{||}^te^{-t^2L_{||}^t}f(x) + \int_0^t2\tau e^{(t^2-\tau^2)L_{||}}\mathrm{div}(\partial_t\AP(x,t)\nabla_{||}e^{-\tau^2L_{||}^t}f(x))d\tau
\\
&=:W_1f(x,t) + W_2f(x,t).
\end{align*}
This argument is similar to the argument presented in \cite{ulmer_solvability_2025}. Please note that if the operator \(L_{||}^t\) is independent of \(t\), then \(W_2f\equiv 0\).
\medskip

When clear from context and when there is no chance of confusing the present derivative with the full one is, we will drop the subscript \(||\).

\subsection{Properties of the heat semi-group and \(\mathcal{P}_t\)}

First, we would like to note the \(L^2-L^2\) boundedness of the following operators.

\begin{prop}[\cite{hofmann_lp_2022}]\label{prop:L2boundednessOfSemigroupOperators}
    Let \(L_{||}:=\mathrm{div}_{||}(\AP(x)\nabla_{||} \cdot)\) be a \(t\)-independent operator and let \(f\in L^2(\mathbb{R}^n)\). Then for \(T_t\in\{e^{-tL_{||}}, t\partial_te^{-tL_{||}}, \sqrt{t}\nabla_{||} e^{-tL_{||}}\}\) we have
    \begin{align*}
        \Vert T_tf\Vert_{L^2}\leq C\Vert f\Vert_{L^2},
    \end{align*}
    where the implicit constant \(C\) only depends on the ellipticity constant of \(\AP\). Furthermore, we also have for \(f\in W^{1,2}(\mathbb{R}^n)\)
    \[\Vert \nabla_{||} e^{-tL_{||}}f\Vert_{L^2}\leq C\Vert \nabla_{||} f\Vert_{L^2}.\]
\end{prop}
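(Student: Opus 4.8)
The plan is to prove each of the four bounds in \refprop{prop:L2boundednessOfSemigroupOperators} using the standard machinery attached to the resolved Kato conjecture for the $t$-independent operator $L_{||}=\mathrm{div}_{||}(\AP\nabla_{||}\cdot)$. The operator $L_{||}$ is maximal accretive (up to the elliptic rescaling), so it generates a bounded analytic semigroup $e^{-tL_{||}}$ on $L^2(\mathbb{R}^n)$; this immediately gives $\Vert e^{-tL_{||}}f\Vert_{L^2}\leq C\Vert f\Vert_{L^2}$ with $C$ depending only on the accretivity/ellipticity constant. Analyticity of the semigroup gives the first-order estimate $\Vert tL_{||}e^{-tL_{||}}f\Vert_{L^2}\leq C\Vert f\Vert_{L^2}$, which is exactly the claim for $T_t=t\partial_t e^{-tL_{||}}$ since $\partial_t e^{-tL_{||}}=-L_{||}e^{-tL_{||}}$.

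For $T_t=\sqrt{t}\,\nabla_{||}e^{-tL_{||}}$, the key is the square-function estimate $\int_0^\infty\Vert\sqrt{t}\,\nabla_{||}e^{-tL_{||}}f\Vert_{L^2}^2\,\frac{dt}{t}\lesssim\Vert f\Vert_{L^2}^2$, but one actually wants the stronger \emph{pointwise-in-}$t$ bound, which follows from the Kato estimate $\Vert\nabla_{||}g\Vert_{L^2}\sim\Vert L_{||}^{1/2}g\Vert_{L^2}$ applied to $g=e^{-tL_{||}}f$: indeed $\Vert\sqrt{t}\,\nabla_{||}e^{-tL_{||}}f\Vert_{L^2}\sim\Vert\sqrt{t}\,L_{||}^{1/2}e^{-tL_{||}}f\Vert_{L^2}=\Vert(tL_{||})^{1/2}e^{-tL_{||}}f\Vert_{L^2}\leq C\Vert f\Vert_{L^2}$, the last inequality again by analyticity of the semigroup (the function $z\mapsto z^{1/2}e^{-z}$ is bounded on a sector). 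Finally, for the last estimate with $f\in W^{1,2}$, one writes $\nabla_{||}e^{-tL_{||}}f\sim L_{||}^{1/2}e^{-tL_{||}}f=e^{-tL_{||}}L_{||}^{1/2}f$ (using that $L_{||}^{1/2}$ commutes with the semigroup), then applies the $L^2$-boundedness of $e^{-tL_{||}}$ and the Kato equivalence $\Vert L_{||}^{1/2}f\Vert_{L^2}\sim\Vert\nabla_{||}f\Vert_{L^2}$ in the reverse direction to land back on $\Vert\nabla_{||}f\Vert_{L^2}$.

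Since this is a known result cited from \cite{hofmann_lp_2022}, the honest proof is simply to invoke the Kato conjecture solution together with standard analytic semigroup theory; the only point requiring care is tracking that all constants depend solely on the ellipticity constant $\lambda$ of $\AP$ (and the dimension $n$), which is guaranteed because both the accretivity of $L_{||}$ and the constant in the Kato equivalence are quantitative functions of the ellipticity constant. The main (and only real) obstacle is the Kato estimate $\Vert L_{||}^{1/2}f\Vert_{L^2}\sim\Vert\nabla_{||}f\Vert_{L^2}$ itself, which is the deep input and is exactly what we are allowed to assume as an already-established tool.
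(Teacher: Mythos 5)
The paper does not prove this proposition at all; it is imported as a black box from \cite{hofmann_lp_2022}, so there is no in-paper argument to compare yours against. Your sketch is the standard proof and is essentially correct: maximal accretivity of (minus) \(L_{||}\) gives the uniform \(L^2\) bound on \(e^{-tL_{||}}\); analyticity of the semigroup (boundedness of \(z\mapsto ze^{-z}\) on a sector) gives \(\Vert t\partial_t e^{-tL_{||}}f\Vert_{L^2}\lesssim\Vert f\Vert_{L^2}\); and the final estimate \(\Vert\nabla_{||}e^{-tL_{||}}f\Vert_{L^2}\lesssim\Vert\nabla_{||}f\Vert_{L^2}\) is indeed the one place where, for nonsymmetric \(\AP\), you must invoke the Kato square-root equivalence \(\Vert L_{||}^{1/2}g\Vert_{L^2}\simeq\Vert\nabla_{||}g\Vert_{L^2}\) in both directions together with the commutation \(L_{||}^{1/2}e^{-tL_{||}}=e^{-tL_{||}}L_{||}^{1/2}\) on \(D(L_{||}^{1/2})=W^{1,2}\), and all constants there are quantitative in \(n\) and the ellipticity constant, as you say. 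One simplification worth noting: the bound for \(\sqrt{t}\,\nabla_{||}e^{-tL_{||}}\) does not need Kato at all, since ellipticity and integration by parts give \(\lambda\Vert\nabla_{||}e^{-tL_{||}}f\Vert_{L^2}^2\leq\big|\int\AP\nabla_{||}e^{-tL_{||}}f\cdot\nabla_{||}e^{-tL_{||}}f\,dx\big|=\big|\langle\partial_t e^{-tL_{||}}f,\,e^{-tL_{||}}f\rangle\big|\lesssim t^{-1}\Vert f\Vert_{L^2}^2\), using only the two semigroup bounds already established; your route through \((tL_{||})^{1/2}e^{-tL_{||}}\) and the Kato estimate is also valid, just heavier than necessary for that piece. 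Do watch the sign convention: with the paper's \(L_{||}=\mathrm{div}_{||}(\AP\nabla_{||}\cdot)\) it is \(-L_{||}\) that is accretive, so the semigroup should be understood accordingly.
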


We have the following bounds of the kernel \(K_t(x,y)\) of the semigroup \(e^{tL_{||}}\).

\begin{prop}[Prop 4.3 in \cite{hofmann_lp_2022} or Theorem 6.17 in  \cite{ouhabaz_analysis_2004}]\label{Prop:KernelBounds}
    For any \(l\in\mathbb{N}\) there exists \(C=C(n,\lambda,l), \beta=\beta(n,\lambda)>0\) such that
    \begin{align}
        |\partial_t^l K_t(x,y)|\leq C_l t^{-\frac{n}{2}-l}e^{-\beta\frac{|x-y|^2}{t}}
    \end{align}
    for all \(x,y\in\mathbb{R}^n\).
\end{prop}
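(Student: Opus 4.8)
The plan is to treat this as the standard consequence of analyticity of the semigroup generated by \(L_{||}\) together with Gaussian (Davies--Gaffney type) off-diagonal estimates, so that the only genuine input is the classical case \(l=0\). Write \(L:=L_{||}\). Since \(\AP\) is real, bounded, measurable and elliptic, the form \((u,v)\mapsto\int\AP\nabla_{||} u\cdot\nabla_{||} v\) is bounded and coercive, hence \(-L\) generates a bounded analytic semigroup \(e^{-tL}\) on \(L^2(\R^n)\); in particular \(\partial_t^l e^{-tL}=(-L)^l e^{-tL}\) and \(\Vert(-L)^l e^{-tL}\Vert_{L^2\to L^2}\le C_l\,t^{-l}\) by analyticity. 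So everything reduces to upgrading the \(L^2\)-operator bounds to pointwise kernel bounds with a Gaussian factor.

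\emph{Step 1 (the case \(l=0\)).} First establish \(|K_t(x,y)|\le C\,t^{-n/2}e^{-\beta|x-y|^2/t}\), the classical Aronson/Nash/Davies estimate for real divergence-form operators. One proves (i) the \(L^2\) Davies--Gaffney estimate \(\Vert e^{-tL}(\chi_F g)\Vert_{L^2(E)}\lesssim e^{-c\,\Dist(E,F)^2/t}\Vert g\Vert_{L^2(F)}\) by testing \(\partial_t u=Lu\) against \(e^{2\lambda\varphi}u\) for a bounded Lipschitz weight \(\varphi\) and running a Gronwall argument; (ii) the ultracontractivity \(\Vert e^{-tL}\Vert_{L^1\to L^\infty}\lesssim t^{-n/2}\) via De Giorgi--Nash--Moser local boundedness of weak (sub)solutions (equivalently a Nash inequality argument), which is available because the coefficients are real; and (iii) the pointwise Gaussian bound by the Davies exponential perturbation trick, conjugating \(L\) by \(e^{\pm\lambda\varphi}\), controlling the perturbed semigroup on \(L^2\) and \(L^2\to L^\infty\), and optimizing \(\lambda\sim|x-y|/t\). (Alternatively one could argue purely by PDE: \(\partial_t^l u\) solves the same equation \(\partial_t(\partial_t^l u)=L\partial_t^l u\) since \(L\) is \(t\)-independent, so parabolic interior estimates give \(\Vert\partial_t^l u\Vert_{L^\infty}\lesssim r^{-2l}\Vert u\Vert_{L^\infty}\) on shrinking cylinders, which combined with Step 1 already yields the bound after localization.)

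\emph{Step 2 (off-diagonal bounds for the derivatives) and Step 3 (composition).} Using analyticity, derive \(\Vert(-L)^l e^{-tL}(\chi_F g)\Vert_{L^2(E)}\le C_l\,t^{-l}e^{-c\,\Dist(E,F)^2/t}\Vert g\Vert_{L^2(F)}\), either by writing \((-L)^l e^{-tL}=\tfrac{l!}{2\pi i}\oint_{|z-t|=t/2}(z-t)^{-l-1}e^{-zL}\,dz\) over a circle inside the sector of analyticity and invoking complex-time Davies--Gaffney bounds, or by factoring \((-L)^l e^{-tL}=\big((-L)e^{-tL/l}\big)^l\) and composing the single-factor bound for \((-L)e^{-sL}=\partial_s e^{-sL}\). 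Then split the time interval in three, \(\partial_t^l e^{-tL}=e^{-\frac{t}{3}L}\big[(-L)^l e^{-\frac{t}{3}L}\big]e^{-\frac{t}{3}L}\): the outer factors satisfy off-diagonal bounds \(L^1\to L^2\) and \(L^2\to L^\infty\) with constant \(\lesssim t^{-n/4}e^{-c\Dist^2/t}\) (Step 1 plus the Davies trick and its \(L^1\)/\(L^\infty\) endpoint versions), and the middle factor satisfies an \(L^2\to L^2\) off-diagonal bound with constant \(\lesssim t^{-l}e^{-c\Dist^2/t}\). A standard composition lemma for off-diagonal estimates — decompose \(\R^n\) into dyadic annuli around \(x\) and around \(y\), insert the corresponding cut-offs, apply the three bounds and sum the resulting geometric–Gaussian series — shows the composite maps \(L^1\to L^\infty\) with kernel bounded by \(C_l\,t^{-n/4}\cdot t^{-l}\cdot t^{-n/4}\,e^{-\beta|x-y|^2/t}\), i.e. exactly the claimed bound on \(\partial_t^l K_t(x,y)\).

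The main obstacle is purely that of assembling the classical ingredients correctly rather than any new difficulty: the two facts that need real work are the non-symmetric De Giorgi--Nash--Moser ultracontractivity in Step 1 and the off-diagonal estimate for \((-L)^l e^{-tL}\) in Step 2, where the complex-time Davies--Gaffney bounds require a little care near the edge of the sector of analyticity; the composition step is bookkeeping. Since the statement is verbatim Prop.~4.3 of \cite{hofmann_lp_2022} / Thm.~6.17 of \cite{ouhabaz_analysis_2004}, in the paper it is simply quoted.
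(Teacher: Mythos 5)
The paper gives no proof of this proposition; it is quoted verbatim from Prop.~4.3 in \cite{hofmann_lp_2022} / Theorem 6.17 in \cite{ouhabaz_analysis_2004}, and your outline — the \(l=0\) Aronson bound via Davies--Gaffney estimates, De Giorgi--Nash--Moser ultracontractivity and the Davies perturbation trick, then analyticity plus a Cauchy-formula or composition argument for \(\partial_t^l\) — is exactly the standard argument carried out in those references, valid for real, possibly nonsymmetric, merely measurable elliptic coefficients. Your observation that integrating over the circle \(|z-t|=t/2\) keeps the Gaussian exponent \(\beta\) independent of \(l\) (only \(C_l\) grows) is the right point of care, and since all constants depend only on \(n\) and the ellipticity constant they are uniform over the family \(L^t_{||}\), which is what the paper needs.
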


The kernel bounds give rise to the following local bounds.
\begin{prop}[Prop 11 in \cite{hofmann_dirichlet_2022} and Cor. 5.6 in \cite{ulmer_solvability_2025}, proof of Lemma 6.4 in \cite{ulmer_solvability_2025}]\label{prop:PROP11}
    Let \(x\in \partial\Omega=\mathbb{R}^n\) and \((y,t)\in\Gamma_\alpha(x)\), then there exists \(C=C(n,\lambda,\alpha)>0\) such that
    \begin{enumerate}[(i)]
        \item \begin{align}
        W_1f(y,t)=tL_{||}e^{-t^2L_{||}^t}f(y,t)\leq C M[\nabla_{||}f](x);
    \end{align}
    \item \begin{align}
        e^{-t^2L_{||}^t}(f-(f)_{\Delta_{\alpha t}(x)})(y,t)\leq C M[\nabla_{||}f](x);
    \end{align}
    \item \begin{align}
        \fint_{\Delta_{\alpha t/2}(x)}|\nabla_{||} e^{-t^2L_{||}^t}f(z,\tau)|^2dz d\tau\leq C M[\nabla_{||}f]^2(x)\label{eq:NablaBounded}; \qquad\textrm{and}
    \end{align}
     \item \begin{align}
        \fint_{\Delta_{\alpha t/2}(x)}|s^2\nabla_{||} L_{||}e^{-t^2L_{||}^t}f(z,\tau)|^2dz d\tau\leq C M[\nabla_{||}f]^2(x)\label{eq:NablaLBounded}.
    \end{align}
    \end{enumerate}
\end{prop}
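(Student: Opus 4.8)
The plan is to prove all four estimates as \emph{local} bounds --- pointwise in the cone for (i) and (ii), and as Whitney $L^{2}$-averages for (iii) and (iv) --- on the action of $e^{-sL^{t}_{||}}$ and of its variants $sL^{t}_{||}e^{-sL^{t}_{||}}$, $\sqrt{s}\,\nabla_{||}e^{-sL^{t}_{||}}$ and $s^{3/2}\nabla_{||}L^{t}_{||}e^{-sL^{t}_{||}}$ on the datum $f$ (which is smooth, compactly supported, and accessed only through $\nabla_{||}f$). The two ingredients are \refprop{prop:L2boundednessOfSemigroupOperators} (the $L^{2}\!\to\!L^{2}$ bounds, with constants depending only on ellipticity) and \refprop{Prop:KernelBounds} (Gaussian bounds on the kernel and its time derivatives), together with the Davies--Gaffney $L^{2}$ off-diagonal estimates $\|\nabla_{||}e^{-sL}g\|_{L^{2}(E)}\lesssim s^{-1/2}e^{-c\,\Dist(E,F)^{2}/s}\|g\|_{L^{2}(F)}$ and the analogous ones for $sLe^{-sL}$ and $s^{3/2}\nabla_{||}Le^{-sL}$, which follow from these by the standard exponential-perturbation/composition argument. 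Since \refprop{prop:L2boundednessOfSemigroupOperators} and \refprop{Prop:KernelBounds} are stated for a fixed operator, the structural point is that in every estimate below the transversal variable $t$ (resp.\ $\tau$) is \emph{frozen}: one applies the two propositions to the single operator $L^{t}_{||}$, whose constants depend only on $\lambda$, and only integrates in the transversal variable at the very end.

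For (i) and (ii) I would argue pointwise via the kernel. As constants lie in the kernel of $L^{t}_{||}$, we have $e^{-sL^{t}_{||}}1=1$ and $L^{t}_{||}e^{-sL^{t}_{||}}1=0$; so, with $K_{s}$ the kernel of $e^{-sL^{t}_{||}}$ and $c:=(f)_{\Delta_{\alpha t}(x)}$ (a ball of radius $\sim t$ containing $y$, since $(y,t)\in\Gamma_{\alpha}(x)$), one has the representations $W_{1}f(y,t)=tL^{t}_{||}e^{-t^{2}L^{t}_{||}}f(y)=\int (tL^{t}_{||}K_{t^{2}})(y,z)(f(z)-c)\,dz$ and $e^{-t^{2}L^{t}_{||}}(f-c)(y)=\int K_{t^{2}}(y,z)(f(z)-c)\,dz$, where, by \refprop{Prop:KernelBounds}, $|K_{t^{2}}(y,z)|\lesssim t^{-n}e^{-\beta|y-z|^{2}/t^{2}}$ and $|(tL^{t}_{||}K_{t^{2}})(y,z)|\lesssim t^{-n-1}e^{-\beta|y-z|^{2}/t^{2}}$ (the latter using the $l=1$ case, since $L^{t}_{||}e^{-sL^{t}_{||}}$ has kernel $-\partial_{s}K_{s}$). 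Decomposing $\mathbb{R}^{n}$ into $\Delta_{2t}(y)$ and the dyadic annuli $R_{j}=\Delta_{2^{j+1}t}(y)\setminus\Delta_{2^{j}t}(y)$, one estimates $\fint_{R_{j}}|f-c|$ by a Poincar\'e inequality plus a telescoping chain of ball-averages; every ball in the chain has radius $\gtrsim t$, so its average of $|\nabla_{||}f|$ is $\lesssim_{\alpha}M[\nabla_{||}f](x)$ once $|x-y|<\alpha t$, giving $\fint_{R_{j}}|f-c|\lesssim 2^{j}t\,M[\nabla_{||}f](x)$. Hence the $j$-th annulus contributes a factor $2^{j(n+1)}e^{-\beta4^{j}}$ times the claimed right-hand side, and summing the rapidly convergent series proves (i) and (ii).

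For (iii) and (iv) I would localize and invoke the off-diagonal estimates. Since $e^{-sL^{\tau}_{||}}$ preserves constants, $\nabla_{||}e^{-\tau^{2}L^{\tau}_{||}}f=\nabla_{||}e^{-\tau^{2}L^{\tau}_{||}}(f-c)$ and likewise for $\tau^{2}\nabla_{||}L^{\tau}_{||}e^{-\tau^{2}L^{\tau}_{||}}$, so decompose $f-c=(f-c)\chi_{2\Delta}+\sum_{j\ge1}(f-c)\chi_{R_{j}}$ with $\Delta:=\Delta_{Ct}(y)\supset\Delta_{\alpha t/2}(x)$. On the local piece the $\sqrt{s}\,\nabla_{||}e^{-sL}$ bound of \refprop{prop:L2boundednessOfSemigroupOperators} --- and, for (iv), the composite bound $\|\tau^{2}\nabla_{||}L^{\tau}_{||}e^{-\tau^{2}L^{\tau}_{||}}\|_{L^{2}\to L^{2}}\lesssim\tau^{-1}$ obtained by splitting the heat step and using the $sLe^{-sL}$ bound as well --- gives an $L^{2}(\mathbb{R}^{n})$-norm $\lesssim\tau^{-1}\|f-c\|_{L^{2}(2\Delta)}$, and Poincar\'e turns the resulting $\fint_{\Delta_{\alpha t/2}(x)}|\,\cdot\,|^{2}\lesssim t^{-(n+2)}\|f-c\|_{L^{2}(2\Delta)}^{2}$ into $\lesssim\fint_{2\Delta}|\nabla_{||}f|^{2}\lesssim M[\,|\nabla_{||}f|^{2}\,](x)$. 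On the far pieces the off-diagonal estimate gives an $L^{2}(\Delta_{\alpha t/2}(x))$-norm $\lesssim\tau^{-1}e^{-c4^{j}}\|f-c\|_{L^{2}(R_{j})}$, and the same Poincar\'e/telescoping chain gives $\|f-c\|_{L^{2}(R_{j})}^{2}\lesssim 2^{j(n+2)}t^{n+2}M[\,|\nabla_{||}f|^{2}\,](x)$, so the $j$-th term contributes $2^{j(n+2)}e^{-2c4^{j}}M[\,|\nabla_{||}f|^{2}\,](x)$ to the Whitney average; summing in $j$ (after the triangle inequality to reassemble $f-c$), integrating the resulting bound --- uniform in $\tau\sim t$ --- over the Whitney interval of length $\sim t$, and dividing by $|\Delta_{\alpha t/2}(x)|$, yields (iii) and (iv). Here $M[\nabla_{||}f]^{2}$ is read as the maximal function of $|\nabla_{||}f|^{2}$.

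I expect the main obstacle to be organizational rather than any single deep estimate: one must verify that the off-diagonal estimates for the composite operators $\sqrt{s}\,\nabla_{||}e^{-sL^{t}_{||}}$ and $s^{3/2}\nabla_{||}L^{t}_{||}e^{-sL^{t}_{||}}$ hold with constants uniform in the frozen transversal parameter, and that each telescoping Poincar\'e chain only ever involves balls of radius $\gtrsim t$ --- this is precisely what lets $M[\nabla_{||}f]$ be transferred from the interior point $(y,t)$ to the boundary point $x$. Neither difficulty is serious once \refprop{prop:L2boundednessOfSemigroupOperators} and \refprop{Prop:KernelBounds} are in hand, which is why this proposition is credited to the earlier works; the genuinely new content here is only the packaging of those ingredients for the $t$-dependent family $(L^{t}_{||})_{t>0}$.
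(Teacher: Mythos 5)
The paper does not actually prove this proposition: it is imported from Prop.~11 in \cite{hofmann_dirichlet_2022} and Cor.~5.6 / Lemma~6.4 in \cite{ulmer_solvability_2025}, the only added remark being that (iv) follows by rerunning the Lemma~6.4 argument with \(e^{-t^2L_{||}}f\) replaced by \(tL_{||}e^{-t^2L_{||}}f\). Your reconstruction follows exactly the route those sources take: conservation of constants by the semigroup (which indeed holds for real divergence-form operators and merits a one-line justification), the Gaussian bounds of \refprop{Prop:KernelBounds} applied to the frozen-slice operator \(L^{t}_{||}\) with constants depending only on ellipticity, dyadic annuli with Poincar\'e plus a telescoping chain of averages over balls of radius \(\gtrsim t\) (which is what lets you replace averages near \(y\) by \(M[\nabla_{||}f](x)\)), and the \(L^2\) bounds of \refprop{prop:L2boundednessOfSemigroupOperators} together with off-diagonal estimates for the composite operators obtained by splitting the heat step. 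For parts (i), (iii) and (iv) your bookkeeping is correct and this is essentially the intended proof.

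There is, however, a concrete slip in (ii). Your per-annulus estimate uses the kernel bound \(|K_{t^2}(y,z)|\lesssim t^{-n}e^{-\beta|y-z|^2/t^2}\), so the \(j\)-th annulus contributes \(t^{-n}e^{-\beta 4^j}(2^jt)^n\cdot 2^jt\,M[\nabla_{||}f](x)=2^{j(n+1)}e^{-\beta 4^j}\,t\,M[\nabla_{||}f](x)\); the factor \(2^{j(n+1)}e^{-\beta 4^j}\) multiplies the claimed right-hand side only in case (i), where the kernel of \(tL_{||}e^{-t^2L^t_{||}}\) carries the extra \(t^{-1}\). Thus your argument proves \(|e^{-t^2L^t_{||}}(f-(f)_{\Delta_{\alpha t}(x)})(y)|\lesssim t\,M[\nabla_{||}f](x)\), not the unweighted inequality you assert. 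No refinement can remove the factor \(t\): already for \(A=I\) and \(f\) equal to a linear function on a ball of radius much larger than \(t\) (suitably truncated), with \(|y-x|\sim\alpha t\), the left-hand side is comparable to \(t\) while \(M[\nabla_{||}f](x)\sim 1\), so the bound \(\leq CM[\nabla_{||}f](x)\) cannot hold uniformly in \(t\) with \(C\) independent of \(f\). In other words, the displayed statement (ii) is not scale-invariant and must be read with the factor \(t\) (equivalently, with a \(1/t\) normalization of the left-hand side); you should either record the \(t\)-weighted estimate, which is what your computation yields, or flag this normalization explicitly instead of claiming the inequality as printed.
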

The proof of \eqref{eq:NablaLBounded} is not provided here but works completely analogously to the proof of Lemma 6.4 in \cite{ulmer_solvability_2025}, if we replace the operator \(e^{-t^2L_{||}}f\) by \(tL_{||}e^{-t^2L_{||}}f\).
\smallskip

A direct consequence of \refprop{prop:PROP11} is the following:
\begin{cor}\label{cor:NontangentialBoundsOnW_1andW_2}
    For \(f\in W^{1,p}(\partial \Omega), p>1\), it holds that
    \begin{enumerate}[(i)]
        \item \[\Vert \tilde{N}^{(2)}(\nabla_{||}\mathcal{P}_sf)\Vert_{L^p(\partial\Omega)}, \Vert \tilde{N}(\nabla_{||}\mathcal{P}_sf)\Vert_{L^p(\partial\Omega)}\lesssim\Vert \nabla_{||} f\Vert_{L^p(\partial\Omega)},\textrm{ and}\]
        \item \[\Vert \tilde{N}(W_1\mathcal{P}_sf)\Vert_{L^p(\partial\Omega)}\lesssim\Vert \nabla_{||} f\Vert_{L^p(\partial\Omega)}.\]
    \end{enumerate}
\end{cor}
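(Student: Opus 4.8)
This corollary is a direct consequence of Proposition~\ref{prop:PROP11}: the plan is to show that each nontangential maximal function on the left is pointwise dominated by a constant multiple of the Hardy--Littlewood maximal function $M[\nabla_{||}f]$, and then to invoke the $L^p$-boundedness of $M$, which is available since $p>1$ and $\nabla_{||}f\in L^p(\partial\Omega)$ (because $f\in W^{1,p}(\partial\Omega)$).

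For (i) I would first reduce to controlling $\tilde N^{(2)}(\nabla_{||}\mathcal{P}_sf)$, since $\tilde N(F)\leq \tilde N^{(2)}(F)$ pointwise by the Cauchy--Schwarz inequality. Fix $x\in\partial\Omega$ and $(y,s)\in\Gamma_\alpha(x)$. Every $(z,\tau)\in B(y,s,s/2)$ satisfies $\tau\sim s$ and $|z-x|\lesssim s$, so these points lie in a cone $\Gamma_{\alpha'}(x)$ with an enlarged aperture $\alpha'=\alpha'(\alpha)$, and $\tau$ stays comparable to $s$ across the whole Whitney ball. Hence the $L^2$-average of $\nabla_{||}\mathcal{P}_\tau f(z)=\nabla_{||}e^{-\tau^2L_{||}^\tau}f(z)$ over $B(y,s,s/2)$ is of the type bounded in \eqref{eq:NablaBounded} of Proposition~\ref{prop:PROP11}(iii) (used with aperture $\alpha'$, the variation of the time parameter within a Whitney ball being harmless), which gives
\[
\fint_{B(y,s,s/2)}|\nabla_{||}\mathcal{P}_\tau f(z)|^2\,dz\,d\tau\leq C\,M[\nabla_{||}f]^2(x),\qquad C=C(n,\lambda,\alpha).
\]
Taking the supremum over $(y,s)\in\Gamma_\alpha(x)$ yields $\tilde N^{(2)}(\nabla_{||}\mathcal{P}_sf)(x)\leq C\,M[\nabla_{||}f](x)$, and the maximal theorem then gives $\Vert\tilde N^{(2)}(\nabla_{||}\mathcal{P}_sf)\Vert_{L^p(\partial\Omega)}\lesssim\Vert M[\nabla_{||}f]\Vert_{L^p(\partial\Omega)}\lesssim\Vert\nabla_{||}f\Vert_{L^p(\partial\Omega)}$; together with the reduction this proves (i). For (ii) I would argue in the same way, now starting from the pointwise estimate in Proposition~\ref{prop:PROP11}(i): for $(z,\tau)\in B(y,s,s/2)\subset\Gamma_{\alpha'}(x)$ one has $W_1 f(z,\tau)=\tau L_{||}e^{-\tau^2L_{||}^\tau}f(z)\leq C\,M[\nabla_{||}f](x)$, so averaging over the Whitney ball and taking the supremum over $\Gamma_\alpha(x)$ gives $\tilde N(W_1\mathcal{P}_sf)(x)\leq C\,M[\nabla_{||}f](x)$, and the maximal theorem concludes.

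The proof is essentially bookkeeping and I expect no genuine obstacle; the one point needing slight care is the passage to the enlarged aperture $\alpha'$ — checking that every Whitney ball $B(y,s,s/2)$ with $(y,s)\in\Gamma_\alpha(x)$ lies in the region where Proposition~\ref{prop:PROP11} applies — together with the standard fact that the $L^p$ norm of a (mean-valued) nontangential maximal function is comparable across apertures, so that the $\alpha$-dependence of the constants does no harm.
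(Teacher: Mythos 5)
Your proposal is correct and is exactly the argument the paper intends: the paper derives this corollary directly from Proposition \ref{prop:PROP11} (the pointwise/Whitney-averaged bounds by \(M[\nabla_{||}f]\)) combined with the \(L^p\)-boundedness of the Hardy--Littlewood maximal function for \(p>1\). The aperture-enlargement bookkeeping you flag is the only point of care, and it is handled just as you describe.
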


Furthermore, we have off-diagonal estimates
\begin{prop}[Off-diagonal estimates, Prop. 3.1 in \cite{auscher_necessary_2007}]\label{prop:Off-diagonalEstimates}
    For \(T\in \{e^{-tL},\sqrt{t}e^{-tL},t\partial_te^{-tL}\}\) there exists \(C,\alpha>0\) such that
    \[\Vert Th\Vert_{L^2(E)}\lesssim e^{-\frac{\alpha d(E,F)^2}{t}}\Vert h\Vert_{L^2(\mathbb{R}^n)}\] 
    for all \(h\in L^2(\mathbb{R}^n)\) with \(\mathrm{supp}(h)\subset F\).
\end{prop}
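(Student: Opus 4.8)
\emph{Proof proposal.} This is the classical Davies--Gaffney (Gaussian off-diagonal) $L^2$ estimate for the analytic semigroup generated by the accretive operator $-L$, and the approach I would take is the \emph{Davies perturbation / exponential weight} method — a self-contained energy argument at the level of the parabolic equation, in the same spirit as (and a soft companion of) the pointwise kernel bounds recorded in \refprop{Prop:KernelBounds}. Fix closed sets $E,F\subset\mathbb{R}^n$ and $h\in L^2(\mathbb{R}^n)$ with $\supp h\subset F$, and put $u(\cdot,t):=e^{-tL}h$, so that $u$ solves the forward parabolic equation $\partial_t u=\mathrm{div}(A\nabla u)$ with $u(\cdot,0)=h$; the ellipticity \eqref{eq:DefinitionOfUniformElliptic} makes the associated form accretive, hence $-L$ generates a bounded analytic semigroup on $L^2(\mathbb{R}^n)$ and all computations below are legitimate for $h$ in a dense class. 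Let $\varphi(x):=\min\{d(x,F),N\}$, a bounded $1$-Lipschitz function (the truncation parameter $N\to\infty$ is sent only at the end), fix $\rho>0$, and consider the weighted energy $I_\rho(t):=\int_{\mathbb{R}^n}e^{2\rho\varphi(x)}|u(x,t)|^2\,dx$. Differentiating in $t$, integrating by parts, expanding $\nabla(e^{2\rho\varphi}u)=e^{2\rho\varphi}(\nabla u+2\rho u\,\nabla\varphi)$, and using ellipticity together with Young's inequality gives the differential inequality
\[I_\rho'(t)\le-\lambda\int_{\mathbb{R}^n}e^{2\rho\varphi}|\nabla u|^2\,dx+C\rho^2\,I_\rho(t),\qquad C=C(\lambda).\]

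Discarding the nonnegative gradient term and applying Grönwall yields $I_\rho(t)\le e^{C\rho^2 t}I_\rho(0)=e^{C\rho^2 t}\Vert h\Vert_{L^2}^2$, since $\varphi\equiv0$ on $\supp h$. As $\varphi\ge d(E,F)$ on $E$ (once $N\ge d(E,F)$), this gives $\Vert e^{-tL}h\Vert_{L^2(E)}^2\le e^{-2\rho d(E,F)+C\rho^2 t}\Vert h\Vert_{L^2}^2$, and minimising the exponent over $\rho$ (i.e.\ $\rho\simeq d(E,F)/t$) produces the claimed $\Vert e^{-tL}h\Vert_{L^2(E)}\lesssim e^{-\alpha d(E,F)^2/t}\Vert h\Vert_{L^2}$ with $\alpha=\alpha(\lambda)>0$, after $N\to\infty$. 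Keeping instead the good term, multiplying the differential inequality by $e^{-C\rho^2 t}$ and integrating in time, the same scheme produces the time-averaged gradient estimate $\int_{t/2}^t\Vert\nabla e^{-sL}h\Vert_{L^2(E)}^2\,ds\lesssim e^{-\alpha d(E,F)^2/t}\Vert h\Vert_{L^2}^2$. The very same computation with $\varphi$ also runs for complex times $z$ in the sector of analyticity, giving $\Vert e^{-zL}h\Vert_{L^2(E)}\lesssim e^{-\alpha d(E,F)^2/|z|}\Vert h\Vert_{L^2}$ with $\alpha$ depending on the opening angle.

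To reach the fixed-time bounds for $T=\sqrt t\,\nabla e^{-tL}$ and $T=t\partial_t e^{-tL}=-tLe^{-tL}$ I would use the semigroup identity $e^{-tL}=e^{-(t/2)L}\circ e^{-(t/2)L}$ and the stability of Davies--Gaffney estimates under composition. The outer factors satisfy such an estimate on their own: writing $tLe^{-(t/2)L}$ as a Cauchy integral $\frac{c\,t}{2\pi i}\oint_\gamma\frac{e^{-zL}}{(z-t/2)^2}\,dz$ over a circle $\gamma$ of radius $\simeq t$ around $t/2$ inside the sector and inserting the complex-time estimate above yields the off-diagonal bound for $tLe^{-(t/2)L}$; for $\sqrt t\,\nabla e^{-(t/2)L}$ one combines the parabolic Caccioppoli inequality applied to the solution $e^{-sL}g$ on parabolic cylinders of radius $\simeq\sqrt t$ with the already-proven off-diagonal estimate for $e^{-sL}$, using also the plain $L^2\!\to\!L^2$ bounds of \refprop{prop:L2boundednessOfSemigroupOperators}. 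One then decomposes $\mathbb{R}^n$ into the dyadic annuli $G_0:=\{d(x,F)<\sqrt t\}$ and $G_j:=\{2^{j-1}\sqrt t\le d(x,F)<2^j\sqrt t\}$ for $j\ge1$, writes $Th=\sum_j(\text{outer factor})(\chi_{G_j}e^{-(t/2)L}h)$, applies the two off-diagonal bounds to the pairs $(E,G_j)$ and $(G_j,F)$, and sums: for every $j$ at least one of $d(E,G_j)$ and $d(G_j,F)$ is $\gtrsim d(E,F)$ while $d(G_j,F)\gtrsim2^j\sqrt t$, so after splitting the Gaussian exponents in half the series collapses to $\lesssim e^{-\alpha' d(E,F)^2/t}$.

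The genuinely routine part is the energy identity and the Grönwall step for $e^{-tL}$ — essentially one line once analyticity of the semigroup is in hand. The \textbf{main obstacle} is the passage to $\sqrt t\,\nabla e^{-tL}$ and $t\partial_t e^{-tL}$: the weighted-energy method only controls the gradient \emph{averaged in time}, so one really needs the composition/annular-decomposition machinery (equivalently, Cauchy's formula for the time derivative and Caccioppoli for the gradient), and a little care is required so that the polynomial losses in Caccioppoli and the number of annuli are absorbed by the Gaussian factor. A secondary, standard point to be mindful of is the justification of the formal manipulations — differentiation under the integral, the integration by parts, and the truncation of $\varphi$ — which use only density of nice data and accretivity of the form, and therefore remain valid for merely bounded measurable, possibly non-symmetric $A$.
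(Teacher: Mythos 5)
The paper does not actually prove this proposition; it is imported verbatim from the cited reference (Prop.\ 3.1 in \cite{auscher_necessary_2007}), and your Davies--Gaffney / exponential-weight argument --- weighted energy identity, Gr\"onwall, optimization in $\rho$, then composition with Caccioppoli and an annular decomposition to pass to $\sqrt{t}\,\nabla e^{-tL}$ and $t\partial_t e^{-tL}=-tLe^{-tL}$ --- is precisely the standard proof behind that citation, and it is sound for bounded measurable, possibly nonsymmetric $A$ since only accretivity of the form is used. One small remark: the operator family in the statement should be read as $\sqrt{t}\,\nabla e^{-tL}$ (as you correctly treated), the displayed $\sqrt{t}\,e^{-tL}$ being a typo, and this is the form in which the proposition is invoked later in the paper.
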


First, we note that since \(W_1f\) and \(W_2f\) satisfy some PDE, we have a Cacciopolli inequality and Cacciopolli-type inequality respectively.
\begin{prop}[Lemma 4.2 and Lemma 4.3 in \cite{ulmer_solvability_2025}]\label{prop:CacciopolliTypeInequality}
    Let \(B(x,t,4r)\subset \Omega\), then 
    \begin{align}
        \int_{B(x,t,r)} |\nabla_{||}W_1f(y,s)|^2 dyds\lesssim \frac{1}{r^2}\int_{B(x,t,2r)} |W_1f(y,s)|^2 dyds,\label{lemma:w_tSqFctBound1}
    \end{align}
    and
    \begin{align}
        \fint_{B(x,t,r)} |\nabla_{||}W_2f(y,s)|^2 dyds&\lesssim  \frac{1}{r^2}\fint_{B(x,t,2r)} |W_2f(y,s)|^2 dyds\nonumber
        \\
        &\qquad + \Vert\partial_s\AP(y,s)\Vert_{L^\infty(B(x,t,2r))}^2M[\nabla_{||}f]^2(x).\label{eq:secondsummand}
    \end{align}
\end{prop}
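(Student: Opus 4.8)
The plan is to run the classical interior energy estimate for the divergence-form equations that \(W_1f\) and \(W_2f\) satisfy. Recall that \(\partial_t\mathcal{P}_tf=W_1f+W_2f\) splits \(\partial_t\mathcal{P}_tf\) into the contribution of differentiating the exponent \(t^2\) and the contribution of differentiating the coefficient slice \(\AP(\cdot,t)\). Decoupling the two occurrences of \(t\) via \(Wf(x,\rho,\sigma):=e^{-\rho^2L_{||}^\sigma}f(x)\), one checks (as in \cite{ulmer_solvability_2025}) that in the variable \(m=\rho^2\) the map \(m\mapsto e^{-mL_{||}^\sigma}f\) solves the \(\sigma\)-frozen heat equation \(\partial_m u=\mathrm{div}_{||}(\AP(\cdot,\sigma)\nabla_{||}u)\); differentiating this equation in \(m\) shows that \(W_1f\) is (the diagonal value of) a solution of the same \emph{homogeneous} equation, and differentiating it in \(\sigma\) and invoking Duhamel shows that \(W_2f\) is (the diagonal value of) a solution of the \emph{inhomogeneous} equation with right-hand side \(\mathrm{div}_{||}(\partial_\sigma\AP(\cdot,\sigma)\,\nabla_{||}e^{-mL_{||}^\sigma}f)\), whose flux factor is on the diagonal precisely \(\partial_t\AP(\cdot,t)\nabla_{||}\mathcal{P}_sf\). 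Since \(B(x,t,4r)\subset\Omega\) forces \(r\le t/4\), on \(B(x,t,2r)\) we have \(s\approx t\), so we may freeze the coefficient slice at \(\sigma=t\); the resulting error \(\|\AP(\cdot,s)-\AP(\cdot,t)\|_{L^\infty(B(x,t,2r))}\lesssim r\,\|\partial_tA\|_{L^\infty(B(x,t,2r))}\lesssim r/t\le 1/4\) is harmless, and this is where the hypothesis \(|\partial_tA|\le C/t\) first enters.

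For \eqref{lemma:w_tSqFctBound1}: with \(\sigma=t\) fixed, \(W_1f\) is, up to the harmless weight \(2\rho\approx t\), the diagonal restriction of a solution of a homogeneous parabolic equation. In the \(m\)-variable the region \(\{(y,s):(y,s)\in B(x,t,r)\}\) becomes a set of spatial radius \(\approx r\) and temporal width \(\approx tr\); one covers it by \(\approx t/r\) standard backward parabolic cylinders of spatial radius \(r\), applies the usual Caccioppoli inequality on each (a cut-off \(\eta\) adapted to each cylinder, ellipticity on the left, Cauchy--Schwarz and Young on the cross terms), sums over the finitely overlapping enlargements, and changes variables back — whereupon the weight cancels between the two sides — yielding \(\int_{B(x,t,r)}|\nabla_{||}W_1f|^2\lesssim r^{-2}\int_{B(x,t,2r)}|W_1f|^2\).

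For \eqref{eq:secondsummand}: repeat this for \(W_2f\). Testing the inhomogeneous equation against \(\eta^2W_2f\) produces, beyond the terms already handled, the pairing \(\int\partial_t\AP(\cdot,t)\,\nabla_{||}\mathcal{P}_sf\cdot\nabla_{||}(\eta^2W_2f)\). Young's inequality turns this into an \(\varepsilon\int\eta^2|\nabla_{||}W_2f|^2\) piece (absorbed into the left side), an \(r^{-2}\int_{B(x,t,2r)}|W_2f|^2\) piece, and \(C_\varepsilon\int_{B(x,t,2r)}|\partial_t\AP|^2|\nabla_{||}\mathcal{P}_sf|^2\). Bounding \(|\partial_t\AP|\) by its \(L^\infty(B(x,t,2r))\)-norm and applying \eqref{eq:NablaBounded} of \refprop{prop:PROP11} in the form \(\fint_{B(x,t,2r)}|\nabla_{||}\mathcal{P}_sf|^2\lesssim M[\nabla_{||}f]^2(x)\), the last piece is \(\lesssim\|\partial_t\AP\|^2_{L^\infty(B(x,t,2r))}\,M[\nabla_{||}f]^2(x)\) after dividing through by \(|B(x,t,2r)|\). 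Collecting everything — and using once more \(|\partial_tA|\le C/t\) to absorb the lower-order coupling and freezing errors generated by passing to the diagonal — gives \eqref{eq:secondsummand}.

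The step I expect to be the main obstacle is exactly the control of the inhomogeneity in \eqref{eq:secondsummand}: one must ensure the source \(\mathrm{div}_{||}(\partial_t\AP\,\nabla_{||}\mathcal{P}_sf)\) contributes no more than \(\|\partial_t\AP\|^2_{L^\infty}M[\nabla_{||}f]^2(x)\). This is precisely why one is forced to feed in the interior square-function bound \eqref{eq:NablaBounded} on \(\nabla_{||}\mathcal{P}_sf\) — no pointwise gradient bound is available for merely measurable coefficients — and why the pointwise constraint \(|\partial_tA|\le C/t\) is needed to reconcile the moving coefficient slice \(\AP(\cdot,s)\) with the frozen one used in the energy estimate. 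The homogeneous estimate \eqref{lemma:w_tSqFctBound1} for \(W_1f\) is, by comparison, routine once the frozen-coefficient parabolic structure has been identified.
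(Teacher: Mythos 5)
The paper does not reprove this proposition from scratch: it quotes Lemmas 4.2 and 4.3 of \cite{ulmer_solvability_2025} and only explains how to adjust the \(W_2\) statement, namely that the second summand of \eqref{eq:secondsummand} comes from the trivial \(L^\infty\) bound on \(\partial_t\AP\) together with \eqref{eq:NablaBounded}, while the first summand uses smoothness of the semigroup and the resulting Caccioppoli inequality on time slices, i.e.\ energy estimates run slice-wise for the \(s\)-frozen operator \(L^s_{||}\). Your treatment of the \(W_2\) source term (testing, Young's inequality, then \(\Vert\partial_t\AP\Vert_{L^\infty(B(x,t,2r))}\) and \eqref{eq:NablaBounded}) coincides with that adjustment, and covering the image of \(B(x,t,r)\) under \(m=s^2\) by finitely overlapping parabolic cylinders is a reasonable way to organize the energy estimate for a genuine solution of a frozen-coefficient equation.

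The genuine gap is the freezing step. The maps \((y,s)\mapsto W_1f(y,s)=2sL^s_{||}e^{-s^2L^s_{||}}f(y)\) and \((y,s)\mapsto W_2f(y,s)\) are diagonal restrictions in which the coefficient slice moves with the integration variable; neither solves the \(t\)-frozen (homogeneous or inhomogeneous) parabolic equation on \(B(x,t,2r)\), and the bound \(\Vert\AP(\cdot,s)-\AP(\cdot,t)\Vert_{L^\infty(B(x,t,2r))}\lesssim r/t\le C/4\) is a statement about the generator, not about the solutions your energy argument is applied to. Transferring it to the semigroup outputs requires a Duhamel expansion such as \(e^{-mL^s_{||}}f-e^{-mL^t_{||}}f=\int_0^m e^{-(m-\rho)L^s_{||}}\mathrm{div}_{||}\big((\AP(\cdot,s)-\AP(\cdot,t))\nabla_{||}e^{-\rho L^t_{||}}f\big)\,d\rho\), and the resulting correction is an object of the same nature as \(W_2f\): after applying \(L^s_{||}\) or \(\nabla_{||}\) it is controlled only by \(M[\nabla_{||}f]\)-type quantities (via \eqref{eq:NablaBounded} and off-diagonal estimates), not by \(r^{-2}\int_{B(x,t,2r)}|W_1f|^2\), so it can neither be hidden on the left nor dominated by the right-hand side of \eqref{lemma:w_tSqFctBound1}. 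Carried out honestly, your scheme therefore yields \eqref{lemma:w_tSqFctBound1} only with an additional additive term of the same nature as the second summand in \eqref{eq:secondsummand}, which is strictly weaker than the stated clean Caccioppoli inequality for \(W_1\); and in the \(W_2\) estimate the ``lower-order coupling and freezing errors'' you propose to ``absorb'' using \(|\partial_tA|\le C/t\) must in fact be produced and estimated by the same Duhamel mechanism (they are admissible there only because they have the form of the second summand). The cited Lemma 4.2, as the paper's remark indicates, avoids comparing different coefficient slices altogether by working with the Caccioppoli inequality on time slices for the \(s\)-frozen equation; your frozen-at-\(t\) shortcut does not substitute for that step.
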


The statement of Lemma 4.3 in \cite{ulmer_solvability_2025} is slightly different to above proposition. However, we can easily see that the second summand in \eqref{eq:secondsummand} results from the trivial \(L^\infty\) bound on \(\partial_t\AP\) and \eqref{eq:NablaBounded}, while we use smoothness of the semi group and the resulting Cacciopolli inequality on time slices for the first term of \eqref{eq:secondsummand}.

\section{Reduction of proof of \refthm{MAINTHM} to area function bounds}\label{section:MainProof}

We present the proof of \refthm{MAINTHM} here. We are going to use certain area function bounds that we will collect and establish in the following sections.
\medskip

To begin with, let \(h\) be a quasi dualising function given by the following lemma.
\begin{lemma}[Lemma 2.8 in \cite{kenig_neumann_1995}]\label{lemma:DefAndPropertiesOfH}
    There exists a function \(h:\mathbb{R}^{n+1}_+\to\mathbb{R}^{n+1}_+\) with compact support in \(\Omega=\mathbb{R}^{n+1}_+\) such that
    \[\Vert \tilde{N}(\nabla u)\Vert_{L^p(\partial\Omega)}\lesssim \int_\Omega \nabla u \cdot h dxdt,\]
    and
    \[ \int_\Omega F \cdot h dxdt\lesssim \Vert \tilde{N}(F)\Vert_{L^p(\partial\Omega)} \]
    for every vector valued function \(F:\mathbb{R}^{n+1}\to\mathbb{R}^{n+1}\).
\end{lemma}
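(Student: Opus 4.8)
The plan is to construct $h$ by hand as a weighted superposition, over boundary points $x\in\mathbb{R}^n$, of normalised indicators of the Whitney balls $B(\hat y(x),\hat s(x),\hat s(x)/2)$ on which the average defining $\tilde{N}(\nabla u)(x)$ is (almost) realised, the weight being a near-optimal $L^{p'}$ dual density. Concretely: fix the solution $u$; since $\tilde{N}(\nabla u)\in L^p$, pick by $L^p$--$L^{p'}$ duality a nonnegative $g\in L^{p'}(\mathbb{R}^n)$ with $\|g\|_{L^{p'}}\le 1$, supported in some ball $\{|x|\le R\}$, and (to keep $h$ of compact support) work throughout with the doubly truncated maximal function $\tilde{N}^{R}_{\alpha,\rho}$ with apices restricted to heights $\rho\le s\le R$, chosen so that $\|\tilde{N}^{R}_{\alpha,\rho}(\nabla u)\|_{L^p}\le 2\int_{\mathbb{R}^n}g\,\tilde{N}^{R}_{\alpha,\rho}(\nabla u)\,dx$. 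For each $x$ in the support of $g$ select $(\hat y(x),\hat s(x))\in\Gamma_\alpha(x)$ with $\rho\le\hat s(x)\le R$ and
\[
\fint_{B(\hat y(x),\hat s(x),\hat s(x)/2)}|\nabla u|\ \ge\ \tfrac12\,\tilde{N}^{R}_{\alpha,\rho}(\nabla u)(x),
\]
a measurable selection existing by a routine discretisation (restrict the apices to a fixed countable dense subset of $\Omega$, losing only another factor $2$, so that $x\mapsto(\hat y(x),\hat s(x))$ is countably valued on measurable sets). Letting $e(z,t):=\nabla u(z,t)/|\nabla u(z,t)|$ where $\nabla u\neq 0$ and $e:=0$ otherwise, define
\[
h(z,t):=\int_{\mathbb{R}^n}\frac{g(x)}{\big|B(\hat y(x),\hat s(x),\hat s(x)/2)\big|}\,\chi_{B(\hat y(x),\hat s(x),\hat s(x)/2)}(z,t)\,e(z,t)\,dx ,
\]
and let $\tilde h\ge 0$ be the scalar majorant obtained by deleting the factor $e$, so that $|h|\le\tilde h$.

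The support of $h$ then lies in $\{(z,t):|z|\le(2+\alpha)R,\ \rho/2\le t\le 3R/2\}$, a compact subset of $\Omega$. For the second inequality, $\big|\int_\Omega F\cdot h\big|\le\int_\Omega|F|\,\tilde h$, and Tonelli gives
\begin{align*}
\int_\Omega|F(z,t)|\,\tilde h(z,t)\,dz\,dt
&=\int_{\mathbb{R}^n}g(x)\Big(\fint_{B(\hat y(x),\hat s(x),\hat s(x)/2)}|F|\Big)dx\\
&\le\int_{\mathbb{R}^n}g(x)\,\tilde{N}_\alpha(F)(x)\,dx\ \le\ \|\tilde{N}(F)\|_{L^p},
\end{align*}
using $(\hat y(x),\hat s(x))\in\Gamma_\alpha(x)$ and Hölder --- and crucially this constant does not depend on $\rho,R$. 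For the first inequality, since $\nabla u\cdot e=|\nabla u|$ pointwise, the same Tonelli computation with $F=\nabla u$ yields $\int_\Omega\nabla u\cdot h=\int_{\mathbb{R}^n}g(x)\big(\fint_{B(\hat y(x),\hat s(x),\hat s(x)/2)}|\nabla u|\big)dx\ge\tfrac12\int g\,\tilde{N}^{R}_{\alpha,\rho}(\nabla u)\ge\tfrac14\|\tilde{N}^{R}_{\alpha,\rho}(\nabla u)\|_{L^p}$. Letting $\rho\downarrow 0$, $R\uparrow\infty$ and invoking monotone convergence recovers the full $\tilde{N}(\nabla u)$ (in the application this passage to the limit is performed \emph{after} the main estimate, whose right-hand side is uniform in $\rho,R$).

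The only genuine subtleties are the measurable selection of the near-optimal apex $(\hat y(x),\hat s(x))$ --- dealt with by the countable-dense-apex discretisation --- and the compact-support requirement, which is exactly why one must carry the truncation parameters $\rho,R$ and verify that the constant in the second inequality is independent of them. I expect this truncation bookkeeping to be the main obstacle; everything else is Fubini--Tonelli and Hölder.
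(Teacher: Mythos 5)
Your construction is correct and is essentially the standard argument behind the cited result: the paper gives no proof of its own (it invokes Lemma 2.8 of Kenig--Pipher), and that lemma is proved by exactly this dualization with an $L^{p'}$ density $g$, a measurable selection of near-optimal Whitney balls in the cones, and the unit vector field $e=\nabla u/|\nabla u|$, with the truncation parameters carried so that the second estimate is uniform in them. Your handling of the two genuine technical points (measurable selection via a countable dense set of apices, and compact support via the $\rho,R$ truncation with constants independent of $\rho,R$, the limit being taken only after the main estimate) matches how the lemma is meant to be used in Section \ref{section:MainProof}.
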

Now let \(v\in W^{1,2}_{loc}(\Omega)\) be the solution to the Poisson-Dirichlet problem of the adjoint operator
\begin{align}\begin{cases} L^*v=\mathrm{div}(h) &\textrm{in }\Omega,\\ v=0 &\textrm{on }\partial\Omega.\end{cases}\label{PDEForv}\end{align}
Since we assume solvability of the Dirichlet problem for the adjoint \(L^*\) and \(1<p'<\infty\), we know that \(\omega^*\in B_{p}(\sigma)\) holds under our assumptions, where \(B_{p}(\sigma)\) is the reverse H\"{o}lder class. Hence, we obtain the following bounds for \(v\):
\begin{prop}[\cite{kenig_neumann_1995}] 
    Let \(v\) be given by \eqref{PDEForv}. If the elliptic measure of the adjoint \(\omega^*\in B_{r}(\sigma)\) for some \(1<r<\infty\), then
    \begin{align}\Vert S(v)\Vert_{L^r(\partial\Omega)},\Vert \tilde{N}(v)\Vert_{L^r(\partial\Omega)}, \Vert \tilde{N}^{(2)}(\delta|\nabla v|)\Vert_{L^r(\partial\Omega)}, \Vert \tilde{N}(\delta|\nabla v|)\Vert_{L^r(\partial\Omega)}\lesssim C.\label{eq:BoundsOnV}\end{align}
\end{prop}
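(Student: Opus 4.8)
The plan is to realise $v$ as the Green potential of $\operatorname{div}h$ for the adjoint operator, to reduce each of the four estimates to the corresponding estimate for one model $L^{*}$-solution attached to a fixed boundary ball, and then to invoke the standard dictionary between solvability of the Dirichlet problem, membership of the elliptic measure in a reverse-Hölder class, and nontangential/square-function control of $L^{*}$-solutions, together with interior Caccioppoli and Moser estimates. All implicit constants are allowed to depend on the fixed auxiliary function $h$ from \reflemma{lemma:DefAndPropertiesOfH}.

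First I would write down the representation formula. Let $\mathcal G^{*}$ be the Green's function of $L^{*}$ on $\Omega$. Since $v=0$ on $\partial\Omega$ and $\supp h$ is compact in $\Omega$, pairing $L^{*}v=\operatorname{div}h$ with $\mathcal G^{*}(X,\cdot)$ and integrating by parts gives $v(X)=-\int_{\Omega}\nabla_{Y}\mathcal G^{*}(X,Y)\cdot h(Y)\,dY$. Fix $R>0$ with $\supp h\subset T(\Delta_{R}(0))$ and $\Dist(\supp h,\partial\Omega)\geq c_{0}>0$, and put $\Delta_{0}:=\Delta_{2R}(0)$. On $\supp h$ one has $\delta(Y)\approx 1$, so the interior Caccioppoli inequality applied to $\mathcal G^{*}(X,\cdot)$ on balls of radius $c_{0}/4$, combined with the CFMS estimate $\mathcal G^{*}(X,Y)\approx \omega^{*X}(\Delta_{\delta(Y)}(\hat Y))/\delta(Y)^{n-1}$ (valid once $|X-Y|\gtrsim\delta(Y)$, and available on $\mathbb R^{n+1}_{+}$ for any uniformly elliptic operator), yields $|v(X)|\lesssim\omega^{*X}(\Delta_{0})$ as soon as $\Dist(X,\supp h)\gtrsim R$, while $|v(X)|\lesssim 1$ on any fixed compact subset of $\overline{\Omega}$ by boundary and interior regularity for the inhomogeneous equation with $L^{\infty}$-divergence right-hand side. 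Since for $|x|$ large the whole cone $\Gamma_{\alpha}(x)$ stays away from $\supp h$, these two facts give, pointwise on $\partial\Omega$,
\[\tilde N(v)(x)+\tilde N^{(2)}(v)(x)\lesssim \chi_{\Delta_{CR}(0)}(x)+\tilde N\big(\omega^{*\cdot}(\Delta_{0})\big)(x),\]
where the $L^{2}$-average is replaced by an $L^{\infty}$-average via Moser on Whitney balls disjoint from $\supp h$.

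It thus suffices to control the $L^{r}(\partial\Omega)$ norm of $\tilde N(\omega^{*\cdot}(\Delta_{0}))$ and of the analogous square function. The function $X\mapsto\omega^{*X}(\Delta_{0})$ is a bounded $L^{*}$-solution with a fixed, bounded, compactly supported boundary datum $\chi_{\Delta_{0}}$, and it is precisely the hypothesis $\omega^{*}\in B_{r}(\sigma)$ — the quantitative form of solvability of the Dirichlet problem in the range dual to $r$ — that puts $\tilde N$ and $S$ of such a solution into $L^{r}(\partial\Omega)$; this is the half-space analogue of the estimates in \cite{kenig_neumann_1995}, and follows by the usual $A_{\infty}$/reverse-Hölder maximal-function arguments together with the decay of $\omega^{*X}(\Delta_{0})$ in $X$ supplied by the comparison principle. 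For $S(v)$ one may alternatively split $v$ into a piece that is $L^{*}$-harmonic away from $\supp h$ (handled as above) and a near-field piece absorbed by the energy inequality $\|\nabla v\|_{L^{2}(\Omega)}\lesssim\|h\|_{L^{2}(\Omega)}$. Finally the two gradient quantities reduce to the first two: off $\supp h$ the function $v$ solves $L^{*}v=0$, so Caccioppoli on Whitney balls gives $\delta(X)^{2}\fint_{B(X,\delta(X)/2)}|\nabla v|^{2}\lesssim\fint_{B(X,3\delta(X)/4)}|v|^{2}$, whence $\tilde N^{(2)}(\delta|\nabla v|)\lesssim\tilde N^{(2)}(v)$ plus a bounded local term, and $\tilde N(\delta|\nabla v|)\lesssim\tilde N^{(2)}(\delta|\nabla v|)$.

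The main obstacle is the passage to the unbounded domain $\Omega=\mathbb R^{n+1}_{+}$: the cited estimates in \cite{kenig_neumann_1995} are on bounded Lipschitz domains, and here one must check that the contribution of nontangential cones at large heights and the decay of $\omega^{*X}(\Delta_{0})$ as $X\to\infty$ are genuinely compatible with membership in $L^{r}(\partial\Omega)$. This is exactly the point where the reverse-Hölder property of $\omega^{*}$, rather than mere ellipticity, is used: for a general uniformly elliptic operator the boundary vanishing rate of $v$ is only Hölder with some small exponent, which would not by itself place $\tilde N(v)$ in $L^{r}$ for small $r$, whereas $\omega^{*}\in B_{r}(\sigma)$ upgrades the far-field decay to the amount required. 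Once this reduction is carried out, the remaining ingredients — energy estimates, Caccioppoli, Moser, and CFMS — are all routine.
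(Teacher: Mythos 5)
There is a genuine gap, and it is right at the start: you declare that ``all implicit constants are allowed to depend on the fixed auxiliary function $h$.'' But $h$ is not a fixed function — the quasi-dualising function of \reflemma{lemma:DefAndPropertiesOfH} (Lemma 2.8 in \cite{kenig_neumann_1995}) is constructed from the solution $u$ itself, so it changes with the boundary datum $f$. The only quantitative information available about $h$ is the pair of duality inequalities in that lemma; nothing controls the radius $R$ of a Carleson box containing $\supp h$, the distance $c_0$ of $\supp h$ to the boundary, $\Vert h\Vert_{L^\infty}$ or $\Vert h\Vert_{L^2}$ uniformly in $u$. Consequently every step of your argument that uses these quantities — ``$\delta(Y)\approx 1$ on $\supp h$,'' the bound $|v(X)|\lesssim 1$ on compacta with constant depending on $h$, the near-field absorption via $\Vert \nabla v\Vert_{L^2}\lesssim \Vert h\Vert_{L^2}$, and the localization of the far field to a single surface ball $\Delta_0=\Delta_{2R}(0)$ — produces constants that blow up as $u$ varies, and the resulting version of \eqref{eq:BoundsOnV} would be useless in Section \ref{section:MainProof}, where the chain $\Vert\tilde N(\nabla u)\Vert_{L^p}\lesssim\int\nabla u\cdot h\lesssim\dots\lesssim\Vert\nabla_{||}f\Vert_{L^p}$ must hold with constants independent of $f$. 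In particular the reduction of $\tilde N(v)$ and $S(v)$ to the single solution $X\mapsto\omega^{*X}(\Delta_0)$ is not legitimate for the actual $h$, whose support is typically spread over Whitney regions at all scales and locations determined by a stopping-time construction adapted to $u$.

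For comparison, the paper does not reprove this statement at all: it is quoted from \cite{kenig_neumann_1995}, where the estimates $\Vert S(v)\Vert_{L^r}$, $\Vert\tilde N(v)\Vert_{L^r}$, $\Vert\tilde N(\delta|\nabla v|)\Vert_{L^r}\lesssim C$ are derived using only the two structural properties of $h$ (in particular $\int_\Omega F\cdot h\lesssim\Vert\tilde N(F)\Vert_{L^p}$), the Green-function representation of $v$, interior Caccioppoli/Moser estimates, and the hypothesis $\omega^*\in B_r(\sigma)$ fed through duality and good-$\lambda$ comparisons between $S$, $\tilde N$ and the elliptic measure — so that $C$ depends only on ellipticity, dimension, $r$, and the constants in \reflemma{lemma:DefAndPropertiesOfH}. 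Your ingredients (Green representation, CFMS comparison, Caccioppoli on Whitney balls, the reduction $\tilde N(\delta|\nabla v|)\lesssim$ local terms plus $\tilde N^{(2)}(v)$) are the right ones, but to close the argument you must run them using only the duality property of $h$, uniformly over all admissible $h$, rather than treating $h$ as a single fixed bump.
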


Under our assumptions, this proposition holds for \(r=p\) and for this choice of \(p\) we have by integration by parts and using that \(Lu=0\)
\begin{align}
    \Vert \tilde{N}(\nabla u)\Vert_{L^p}&\lesssim \int_\Omega \nabla u \cdot h dxdt= \int_\Omega A\nabla u\cdot \nabla v dxdt - \int_{\partial\Omega}A^*\nabla v \cdot u\nu dx\nonumber
    \\
    &=-\int_{\partial \Omega}u b\cdot\nabla_{||}v + ud\partial_t v dx\label{eq:MainProof2}
    \\
    &=\int_{\partial \Omega}u(x,0) \Big(\int_0^\infty \partial_t\big(b(x,t)\cdot\nabla_{||}v(x,t) + d(x,t)\partial_t v(x,t)\big) dt\Big) dx.\label{eq:mainProof1}
\end{align}
Without loss of generality we can approximate all involved components of the matrix \(A\) and \(h\) by smooth functions so that the PDE \eqref{PDEForv} can be used pointwise. The following arguments are independent of this approximation and hence we can take the limit in the very end, but we omit this argument here. Since \(\int_0^\infty \partial_th(x,t)dt=0\),  we can continue with
\begin{align*}
    \eqref{eq:mainProof1}&=-\int_{\partial \Omega}u \Big(\int_0^\infty \mathrm{div}_{||}\big(\AP^*(x,t)\cdot\nabla_{||}v(x,t)+c(x,t)\partial_tv(x,t)\big)dt
    \\
    &\qquad - \int_0^\infty\mathrm{div}_{||}(h(x,t)) + \partial_t h(x,t) dt\Big) dx
    \\
    &=\int_{\partial \Omega}\nabla_{||}u(x,0) \Big(\int_0^\infty\AP(x,t)\cdot\nabla_{||}v(x,t)+c(x,t)\partial_tv(x,t)dt\Big)dx
    \\
    &\qquad - \int_{\Omega}\nabla_{||}u(x,0)\cdot h(x,t)dxdt.
\end{align*}
The last term yields by \reflemma{lemma:DefAndPropertiesOfH}
\begin{align*}
    \Big|\int_{\Omega}\nabla_{||}u(x,0)\cdot h(x,t)dxdt\Big|=\Big|\int_{\Omega}\nabla_{||}f(x)\cdot h(x,t)dxdt\Big|\lesssim \Vert M[\nabla_{||} f]\Vert_{L^p}\lesssim \Vert \nabla_{||} f\Vert_{L^p}. 
\end{align*}
For the first term we can set 
\[V(x,s):=\int_s^\infty \AP(x,t)\cdot\nabla_{||}v(x,t) + c(x,t)\partial_t v(x,t)dt,\]
and get
\begin{align*}
    &\int_{\partial \Omega}\nabla_{||}u(x,0) \Big(\int_0^\infty\AP(x,t)\cdot\nabla_{||}v(x,t) + c(x,t)\partial_t v(x,t)dt\Big)dx
    \\
    &\qquad=\int_{\partial\Omega}\nabla_{||}f(x) V(x,0) dx =- \int_{\Omega}\partial_s(\nabla_{||}\mathcal{P}_sf(x) V(x,s))dxds
    \\
    &\qquad= -\int_{\Omega}\partial_s\nabla_{||}\mathcal{P}_sf(x)\cdot V(x,s) + \nabla_{||}\mathcal{P}_sf(x)\cdot \partial_s V(x,s) dxds.
\end{align*}
We introduce \(\partial_s(s)=1\) and use integration by parts to obtain
\begin{align*}
    &= \int_{\Omega}\partial_{s}^2\nabla_{||}\mathcal{P}_sf(x)\cdot V(x,s)sdxds + 2\int_{\Omega}\nabla_{||}\partial_s\mathcal{P}_sf(x) \cdot\partial_s V(x,s)sdxds
    \\
    &\qquad + \int_{\Omega}\nabla_{||}\mathcal{P}_sf(x) \cdot\partial_{s}^2 V(x,s) s dxds
    \\
    & =:I+II+III.
\end{align*}
First, we observe that \(|\partial_s V(x,s)|\lesssim |\nabla v(x,s)|\) and hence
\begin{align*}
    |II|&\lesssim \Vert \mathcal{A}(s\partial_s\nabla_{||}\mathcal{P}_sf)\Vert_{L^p}\Vert S(v)\Vert_{L^{p'}}\lesssim \Vert \mathcal{A}(s\partial_s\nabla_{||}\mathcal{P}_sf)\Vert_{L^p},
\end{align*}
where we used \eqref{eq:BoundsOnV}. To complete bounding \(|II|\) by \(\Vert\nabla_{||}f\Vert_{L^p}\), we need the area function bound \(\Vert \mathcal{A}(s\partial_s\nabla_{||}\mathcal{P}_sf)\Vert_{L^p}\lesssim \Vert \nabla_{||}f\Vert_{L^p}\), which will be established in \reflemma{lemma:AreaFctBoundsInLp}.

\medskip

Next, we have by integration by parts
\begin{align*}
    III&=\int_{\Omega}\nabla_{||}\mathcal{P}_sf(x) \cdot \partial_s(A_{||}(x,s)\nabla_{||}v(x,s) + c(x,s)\partial_sv(x,s))s dxds
    \\
    &\lesssim \int_{\Omega}\nabla_{||}\mathcal{P}_sf(x) \cdot \partial_sA_{||}(x,s)\nabla_{||}v(x,s) s - L^s\mathcal{P}_sf(x) \partial_s v(x,s) s dxds
    \\
    &\qquad - \int_{\Omega} \nabla_{||}\partial_s\mathcal{P}_sf(x)\cdot c(x,s)\partial_sv(x,s)s +\nabla_{||}\mathcal{P}_sf(x) \cdot c(x,s)\partial_sv(x,s) dxds
    \\
    &\lesssim \Vert\partial_sA\Vert_{\mathcal{C}}\Vert\tilde{N}(\nabla_{||}\mathcal{P}_s f)\Vert_{L^p}\Vert S(v)\Vert_{L^{p'}}+\Vert\mathcal{A}(sL^s\mathcal{P}_s f)\Vert_{L^p}\Vert S(v)\Vert_{L^{p'}}
    \\
    &\qquad+\Vert\mathcal{A}(\nabla_{||}\partial_s\mathcal{P}_s f)\Vert_{L^p}\Vert S(v)\Vert_{L^{p'}} + \Vert\mathcal{A}(\nabla_{||}\mathcal{P}_s f)\Vert_{L^p}\Vert S(v)\Vert_{L^{p'}}.
\end{align*}
For the first term we used a stopping time argument. These are considered to be standard in this area, but for convenience we would like to refer the reader to the integral \(II_{31}\) in \cite{dindos_regularity_2023}, where the authors perform a stopping time argument on an expression of exactly this form.

To bound \(III\) by \(\Vert \nabla_{||} f\Vert_{L^p}\), we need the area function bounds 
\[\Vert\mathcal{A}(\nabla_{||}\partial_s\mathcal{P}_sf)\Vert_{L^p},\Vert\mathcal{A}(\partial_s\mathcal{P}_sf)\Vert_{L^p}, \Vert\mathcal{A}(\nabla_{||}\mathcal{P}_sf)\Vert_{L^p}\lesssim\Vert \nabla_{||} f\Vert_{L^p},\]
which will be established in \reflemma{lemma:AreaFctBoundsInLp}. To establish these, first, we will also have to prove \(\Vert\mathcal{A}(sL^s\mathcal{P}_sf)\Vert_{L^p}=\Vert\mathcal{A}(W_1f)\Vert_{L^p}\lesssim\Vert \nabla_{||} f\Vert_{L^p}\), which we will do in \refcor{cor:AreaFctBoundsInLpW1}. Lastly for \(III\), we can note that \(\Vert\tilde{N}(\nabla_{||}\mathcal{P}_s f)\Vert_{L^p}\lesssim \Vert \nabla_{||} f\Vert_{L^p}\) holds by \refcor{cor:NontangentialBoundsOnW_1andW_2}.
\smallskip

Finally, for \(I\)
\begin{align*}
    |I|&=\Big|\int_{\Omega}s\nabla_{||}\partial_{s}^2\mathcal{P}_sf(x)\cdot\Big(\int_s^\infty \AP(x,t)\nabla_{||}v(x,t) + c(x,t)\partial_tv(x,t)dt\Big) dx\Big|
    \\
    &=\Big|\int_{\Omega}\nabla_{||}\Big(\int_0^ts\partial^2_{s}\mathcal{P}_sf(x)ds\Big)\cdot(\AP(x,t)\nabla_{||}v(x,t) + c(x,t)\partial_tv(x,t)) dxdt\Big|
    \\
    &=\Big|\int_{\Omega}\nabla_{||}\big(t\partial_{t}\mathcal{P}_tf(x)-\mathcal{P}_tf(x)+f(x)\big)\cdot\big(\AP(x,t)\nabla_{||}v(x,t) + c(x,t)\partial_tv(x,t)\big) dxdt\Big|
    \\
    &\lesssim \Vert\mathcal{A}(t\nabla_{||}\partial_t \mathcal{P}_tf)\Vert_{L^p}\Vert S(v)\Vert_{L^{p'}}
    \\
    &\qquad + \Big|\int_{\Omega}\nabla_{||}\big(\mathcal{P}_tf(x)-f(x)\big)\cdot\big(\AP(x,t)\nabla_{||}v(x,t) + c(x,t)\partial_tv(x,t)\big) dxdt\Big|.
\end{align*}
For the last term we have by using the PDE for \(v\) \eqref{PDEForv} and that \((\mathcal{P}f-f)|_{\partial\Omega}=0\)
\begin{align*}
    &\Big|\int_{\Omega}\nabla_{||}\big(\mathcal{P}_tf(x)-f(x)\big)\cdot\big(\AP(x,t)\nabla_{||}v(x,t) + c(x,t)\partial_tv(x,t)\big) dxdt\Big|
    \\
    &\qquad= \Big|\int_{\Omega}\partial_t\mathcal{P}_tf(x)\cdot\big(b(x,t)\nabla_{||}v(x,t) + d(x,t)\partial_tv(x,t)\big)
    \\
    &\qquad \qquad + (\nabla_{||}\mathcal{P}_tf - \nabla_{||}f)(x)\cdot h_{||} + \partial_t \mathcal{P}_tf(x)\cdot h_{n}(x,t) dxdt\Big|
    \\
    &\qquad \lesssim  \Vert\mathcal{A}(\partial_t \mathcal{P}_tf)\Vert_{L^p}\Vert S(v)\Vert_{L^{p'}} +  \Vert\tilde{N}(\partial_t \mathcal{P}_tf)\Vert_{L^p}+\Vert\tilde{N}(\nabla_{||} \mathcal{P}_tf)\Vert_{L^p}
    \\
    &\qquad\lesssim \Vert\mathcal{A}(\partial_t \mathcal{P}_tf)\Vert_{L^p} + \Vert \nabla_{||}f\Vert_{L^p}.
\end{align*}
Here we used again the established bounds in \eqref{eq:BoundsOnV}, \refcor{cor:NontangentialBoundsOnW_1andW_2} and that \(\Vert\tilde{N}(\partial_t \mathcal{P}_tf)\Vert_{L^p}\lesssim \Vert\mathcal{A}(\partial_t\mathcal{P}_tf)\Vert_{L^p}\) to reduce the estimate to the area function bound on \(\partial_t\mathcal{P}\). This bound appeared already and we are going to prove it in \reflemma{lemma:AreaFctBoundsInLp}.

Hence we reduced the proof of \refthm{MAINTHM} to proving the three \(L^p\) area function bounds in the next lemma (and additionally \refcor{cor:AreaFctBoundsInLpW1} which is part of the proof of \reflemma{lemma:AreaFctBoundsInLp}):
\begin{lemma}\label{lemma:AreaFctBoundsInLp}
    Let \(1<p<\infty\). For \(T_tf\in \Big\{\nabla_{||}\mathcal{P}_tf, \partial_t\mathcal{P}_tf, t\nabla_{||}\partial_t\mathcal{P}_tf\Big\}\), there exists \(C>0\) such that
    \[\Vert \mathcal{A}(T_tf)\Vert_{L^p(\partial\Omega)}\leq C \Vert \nabla_{||}f\Vert_{L^p(\partial\Omega)}\]
    for every \(f\in \dot{L}^{p}_1(\partial\Omega)\).
\end{lemma}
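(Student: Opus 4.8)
The plan is to split the three operators into the building blocks $\nabla_{||}\mathcal{P}_t$, $W_1$, $W_2$ of Section~\ref{section:rho} and then to assemble the $L^p$ estimates by real interpolation. Since $\partial_t\mathcal{P}_tf=W_1f+W_2f$ and $\mathcal{A}$ is subadditive,
\[
\mathcal{A}(\partial_t\mathcal{P}_tf)\le\mathcal{A}(W_1f)+\mathcal{A}(W_2f),\qquad
\mathcal{A}(t\nabla_{||}\partial_t\mathcal{P}_tf)\le\mathcal{A}(t\nabla_{||}W_1f)+\mathcal{A}(t\nabla_{||}W_2f).
\]
For the terms with an extra spatial derivative I would pass to the averaged area function and apply the Caccioppoli inequalities of \refprop{prop:CacciopolliTypeInequality} on a Whitney ball $B(y,t,t/4)\subset\Gamma_\alpha(x)$: by \eqref{lemma:w_tSqFctBound1} this gives $\|\mathcal{A}(t\nabla_{||}W_1f)\|_{L^p}\lesssim\|\mathcal{A}(W_1f)\|_{L^p}$ (using that $L^p$-norms of area functions with different apertures are comparable), while \eqref{eq:secondsummand} produces, besides $\mathcal{A}(W_2f)$, an extra term of the form $\big(\int_{\Gamma_{\alpha'}(x)}\|\partial_s\AP\|_{L^\infty(B(y,s,s/2))}^2\,M[\nabla_{||}f]^2(y)\,s^{1-n}\,dy\,ds\big)^{1/2}$. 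This term is controlled because $\sup_{B(\cdot,\cdot,\cdot/2)}|\partial_tA|^2\,t\,dx\,dt$ is a Carleson measure — a consequence of $|\partial_tA|\le C/t$ together with the Carleson hypothesis \eqref{cond:ExplanationForCarlesonCondition} — combined with \eqref{eq:DualityCarelsonNormNontangential}, \eqref{eq:NablaBounded} and the $L^p$-boundedness of the Hardy--Littlewood maximal operator. Hence the lemma reduces to proving, for $1<p<\infty$,
\[
\|\mathcal{A}(\nabla_{||}\mathcal{P}_tf)\|_{L^p},\quad \|\mathcal{A}(W_1f)\|_{L^p},\quad \|\mathcal{A}(W_2f)\|_{L^p}\ \lesssim\ \|\nabla_{||}f\|_{L^p},
\]
the second being recorded separately as \refcor{cor:AreaFctBoundsInLpW1}.

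For each building block $T_t\in\{\nabla_{||}\mathcal{P}_t,W_1,W_2\}$ I would produce two endpoint estimates and interpolate. At the top endpoint: $\|\mathcal{C}(T_tf)\|_{L^\infty}\lesssim\|\nabla_{||}f\|_{L^\infty}$; for $W_2$ this is the pointwise bound on the Carleson function proved in Section~\ref{section:W2} (driven by the Carleson norm of $\sup_B|\partial_tA|$), and for $\nabla_{||}\mathcal{P}_t$ and $W_1$ (Section~\ref{section:W1}) it follows from the usual argument of localising $f$ around a boundary ball, subtracting its average, and combining \refprop{prop:L2boundednessOfSemigroupOperators}, the off-diagonal estimates \refprop{prop:Off-diagonalEstimates} and the local bounds \refprop{prop:PROP11}. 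At the bottom endpoint: the Hardy--Sobolev to $L^1$ bound $\|\mathcal{A}(T_tf)\|_{L^1}\lesssim\|\nabla_{||}f\|_{\mathcal{H}^1}$, obtained by testing on an atom $a$ of the Hardy--Sobolev space (so $\nabla a$ is an $\mathcal{H}^1$-atom): on a fixed dilate of the defining cube $Q$ one uses H\"older and an $L^\beta$ area-function bound ($\beta>2$ for $W_2$, $\beta=2$ for $\nabla_{||}\mathcal{P}_t$ and $W_1$), while off $Q$ one exploits the cancellation of $\nabla a$ together with kernel/off-diagonal decay to obtain an integrable tail. The $L^\beta$ (resp.\ $L^2$) bounds needed here are established first: the $L^\beta$ bound for $W_2$ is a consequence of its pointwise Carleson bound, and the $L^2$ bounds for $\nabla_{||}\mathcal{P}_t$ and $W_1$ are then deduced from the $L^2$ bound for $W_2$. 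Finally the real interpolation argument of Section~\ref{section:RealInterpolation} — identifying $\|\mathcal{A}(\cdot)\|_{L^p}$ and $\|\mathcal{C}(\cdot)\|_{L^\infty}$ with tent-space norms and interpolating the two endpoints above — upgrades these to $\|\mathcal{A}(T_tf)\|_{L^p}\lesssim\|\nabla_{||}f\|_{L^p}$ for all $1<p<\infty$; a density argument extends this from $C^\infty_C\cap\dot{L}^p_1$ to all of $\dot{L}^p_1$, which is the claim.

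The main obstacle is the pair $\nabla_{||}\mathcal{P}_t$ and $W_1$. The operator $W_2$ is ``small'': it vanishes when $\AP$ is $t$-independent and is otherwise created entirely by $\partial_t\AP$, hence is directly tamed by the Carleson hypothesis. By contrast $\nabla_{||}\mathcal{P}_t$ and $W_1$ carry no free smallness, and $\nabla_{||}$ does not commute with $\mathcal{P}_t=e^{-t^2L^t_{||}}$. Their $L^2\to L^2$ area-function bounds — on which every subsequent estimate in the chain rests — require a quadratic estimate for the $t$-dependent family $(L^t_{||})_{t>0}$ in which the error coming from differentiating the coefficients in $t$ is again the $W_2$-term; this is exactly why the $W_2$-analysis has to be completed first and why the dependency structure is the one described in the overview. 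The accompanying Hardy--Sobolev to $L^1$ bound for $\nabla_{||}\mathcal{P}_t$ and $W_1$, which needs quantitative off-diagonal decay for $\nabla_{||}L^t_{||}e^{-t^2L^t_{||}}$ uniform in $t$, is the other delicate point.
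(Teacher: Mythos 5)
Your overall plan coincides with the paper's proof: the splitting $\partial_t\mathcal{P}_t=W_1+W_2$, the two endpoint estimates (Carleson function in $L^\infty$, Hardy--Sobolev atoms into $L^1$) for the building blocks, the $L^2$ quadratic estimates for $\nabla_{||}\mathcal{P}_t$ and $W_1$ proved after and by means of the $W_2$ analysis, and the tent-space/Hardy--Sobolev real interpolation are exactly the paper's steps, in the same order of dependency. The one substantive deviation is your treatment of the third operator $t\nabla_{||}\partial_t\mathcal{P}_tf$: instead of carrying $t\nabla_{||}W_1$ and $t\nabla_{||}W_2$ through the interpolation scheme as operators in their own right (which is what the paper does, invoking \refprop{prop:CacciopolliTypeInequality} inside the endpoint lemmas \reflemma{lemma:CarlesonFunctionBoundW_2}, \reflemma{lemma:HardyNormBoundsW_2}, \reflemma{lemma:CarlesonFunctionBoundW_1}, \reflemma{lemma:HardyNormBoundsW_1}), you apply the Caccioppoli inequalities directly at the $L^p$ level. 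For $t\nabla_{||}W_1$ this is harmless, since \eqref{lemma:w_tSqFctBound1} has no extra term and change of aperture is an $L^p$ isomorphism for every $p$. For $t\nabla_{||}W_2$, however, there is a genuine gap.

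Concretely, your reduction replaces $\mathcal{A}(t\nabla_{||}W_2f)$ by $\mathcal{A}(W_2f)$ plus the error
\[
E(x):=\Big(\int_{\Gamma_{\alpha'}(x)}\sup_{B(y,s,s/2)}|\partial_s\AP|^2\,M[\nabla_{||}f]^2(y)\,s^{1-n}\,dy\,ds\Big)^{1/2},
\]
and you claim $\Vert E\Vert_{L^p}\lesssim\Vert\nabla_{||}f\Vert_{L^p}$ from the Carleson property of $\sup_B|\partial_s\AP|^2 s\,dy\,ds$, the duality \eqref{eq:DualityCarelsonNormNontangential} and maximal function bounds. That chain only works for $p>2$: Carleson embedding against the (necessarily $L^2$-averaged) nontangential maximal function of the $s$-independent function $M[\nabla_{||}f]$ yields $\Vert E\Vert_{L^p}\lesssim\Vert M[M[\nabla_{||}f]^2]\Vert_{L^{p/2}}^{1/2}$, and the Hardy--Littlewood operator is not bounded on $L^{p/2}$ when $p\le 2$. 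This is not a removable technicality: taking a density with $\sup_B|\partial_s\AP|^2 s\approx r^{-1}$ on $\Delta(0,r)\times(r,2r)$ (admissible under $|\partial_tA|\le C/t$ and \eqref{cond:ExplanationForCarlesonCondition}) and $\nabla_{||}f$ essentially $\chi_{B(y_0,\epsilon)}$ with $\epsilon\ll r$, one computes $\Vert E\Vert_{L^p}/\Vert\nabla_{||}f\Vert_{L^p}\gtrsim(\epsilon/r)^{n(1/2-1/p)}\to\infty$ for $p<2$, so the intermediate inequality you rely on is false in that range. Since the range $1<p<2$ is precisely the one relevant for the Regularity problem (dual to large $p'$ for the Dirichlet problem), your shortcut does not prove the lemma where it is needed. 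The repair is the paper's route: prove the endpoint bounds \reflemma{lemma:CarlesonFunctionBoundW_2}\eqref{item:LocalSquareBoundNablaW2} and \reflemma{lemma:HardyNormBoundsW_2}\eqref{item:HardySquareBoundNabla} for $t\nabla_{||}W_2$ itself, where the Caccioppoli error only has to be controlled in $L^\infty$ (trivially, since $M[M[|\nabla_{||}f|^2]]\le\Vert\nabla_{||}f\Vert_{L^\infty}^2$) or, for atoms, in $L^{\beta/2}$ with $\beta>2$, and then interpolate to reach all $1<p<\infty$.
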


To prove this lemma, we are going to need the method of real interpolation, which we introduce in the following section.

\section{Tent spaces and real interpolation}\label{section:RealInterpolation}

Let \(A_0,A_1\) be two normed vector spaces of functions \(a:\mathbb{R}^n\to\mathbb{R}\). For each \(a\in A_0 + A_1\) we define the \(K-\)functional of real interpolation by
\[K(a,t,A_0,A_1)=\inf_{a=a_0+a_1}\Vert a_0\Vert_{A_0} + t\Vert a_1\Vert_{A_1}.\]
For \(0<\theta<1,1\leq q\leq \infty\), we denote by \((A_0,A_1)_{\theta,q}\) the real interpolation space between \(A_0\) and \(A_1\) defined as
\[(A_0,A_1)_{\theta,q}=\Big\{a\in A_1+A_0:\Vert a\Vert_{\theta,q}=\Big(\int_0^\infty (t^{-\theta}K(a,t,A_0,A_1))^q\frac{\mathrm{d}t}{t}\Big)^{1/q}<\infty \Big\}.\]
According to Theorem 3.1.2 in \cite{bergh_interpolation_1976}, \(K\) can be seen as an exact interpolation functor, which means that if an operator \(T\) is bounded from \(A_0\to B_0\) and from \(A_1\to B_1\) for linear normed vector spaces of functions, then
\(T:(A_0,A_1)_{\theta,q}\to (B_0,B_1)_{\theta,q}\) is a bounded linear operator with \(\Vert T\Vert\leq C\Vert T\Vert^{1-\theta}_{A_0\to B_0}\Vert T\Vert^{\theta}_{A_1\to B_1}\).  
\medskip

Let us introduce the Hardy-Sobolev space like in \cite{dindos_regularity_2012} or \cite{badr_atomic_2010} and \cite{badr_abstract_2010}.
\begin{defin}\label{def:HardySpace}
    Let \(1<\beta\leq\infty\). We call a function \(a:\mathbb{R}^n\to\mathbb{R}\) a \textit{homogeneous Hardy-Sobolev \(\beta\)-atom} associated to a boundary ball \(\Delta\subset\mathbb{R}^n\) if
    \begin{enumerate}[(i)]
        \item \(\mathrm{supp}(a)\subset \Delta\);
        \item \(\Vert \nabla a\Vert_{L^\beta(\mathbb{R}^n)}\leq |\Delta|^{-\frac{1}{\beta'}} \); and
        \item \(\Vert a\Vert_{L^1(\mathbb{R}^n)}\leq l(\Delta). \)
    \end{enumerate}
    If \(f\) can be written as
    \begin{align}f=\sum_{j=1}^\infty\lambda_ja_j\label{eq:AtomicHardyDecomposition}\end{align}
    for \(\beta\)-atoms \(a_j\) and coefficients \(\lambda_j\in\mathbb{R}\) with \(\sum_{j=1}^\infty|\lambda_j|<\infty\), we say that \(f\in \dot{HS}^{1,\beta}_{atom}\), where \(\Vert f\Vert_{\dot{HS}^{1,\beta}_{atom}}:=\inf\sum_{j=1}^\infty|\lambda_j|\) with an infimum that is taken over all choices of decompositions \eqref{eq:AtomicHardyDecomposition}. 
\end{defin}

Now, we have the following real interpolation result.
\begin{prop}[Thm 0.4 in \cite{badr_abstract_2010}]
    For every \(\beta\in(1,\infty]\) and \(1<p<\infty\) the real interpolation space is
    \[(\dot{HS}^{1,\beta}_{atom}, \dot{W}^{1,\infty})_{1-1/p,p}=\dot{W}^{1,p}.\]
\end{prop}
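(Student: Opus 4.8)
\noindent The plan is to prove the equality as two continuous inclusions, reducing both to the classical identity $(L^1,L^\infty)_{1-1/p,p}=L^{p}$ (with equivalent norms; see e.g. \cite{bergh_interpolation_1976}). Set $\theta:=1-1/p$. I will use three preliminary facts. First, by Hölder's inequality every Hardy--Sobolev $\beta$-atom $a$ associated to a ball $\Delta$ satisfies $\Vert\nabla a\Vert_{L^1}\le|\Delta|^{1/\beta'}\Vert\nabla a\Vert_{L^\beta}\le 1$, so summing an atomic decomposition gives $\Vert\nabla f\Vert_{L^1}\le\Vert f\Vert_{\dot{HS}^{1,\beta}_{atom}}$; hence $\nabla\colon\dot{HS}^{1,\beta}_{atom}\to L^1$ is bounded. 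Second, up to equivalence of norms $\dot{HS}^{1,\beta}_{atom}$ is independent of $\beta\in(1,\infty]$: the inclusion $\dot{HS}^{1,\beta_0}_{atom}\hookrightarrow\dot{HS}^{1,\beta}_{atom}$ for $\beta_0<\beta$ is the nontrivial one and follows by writing a single atom as an absolutely convergent sum of ``better'' atoms via a further Calderón--Zygmund subdivision, exactly as for classical $H^1$-atoms. Third, $(L^1,L^\infty)_{\theta,p}=L^p$ with equivalent norms.

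For the inclusion $(\dot{HS}^{1,\beta}_{atom},\dot W^{1,\infty})_{\theta,p}\hookrightarrow\dot W^{1,p}$ I would argue that, since the distributional gradient maps $\dot{HS}^{1,\beta}_{atom}\to L^1$ and $\dot W^{1,\infty}\to L^\infty$ boundedly, every splitting $f=f_0+f_1$ yields $K(\nabla f,t,L^1,L^\infty)\le\Vert\nabla f_0\Vert_{L^1}+t\Vert\nabla f_1\Vert_{L^\infty}\lesssim\Vert f_0\Vert_{\dot{HS}^{1,\beta}_{atom}}+t\Vert f_1\Vert_{\dot W^{1,\infty}}$, so $K(\nabla f,t,L^1,L^\infty)\lesssim K(f,t,\dot{HS}^{1,\beta}_{atom},\dot W^{1,\infty})$. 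Taking $L^p\big(t^{-\theta p}\,\mathrm{d}t/t\big)$ norms of both sides and invoking $(L^1,L^\infty)_{\theta,p}=L^p$ gives $\Vert f\Vert_{\dot W^{1,p}}\simeq\Vert\nabla f\Vert_{L^p}\lesssim\Vert f\Vert_{(\dot{HS}^{1,\beta}_{atom},\dot W^{1,\infty})_{\theta,p}}$. This works for every $\beta\in(1,\infty]$.

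For the reverse inclusion $\dot W^{1,p}\hookrightarrow(\dot{HS}^{1,\beta}_{atom},\dot W^{1,\infty})_{\theta,p}$, by the second preliminary fact it suffices to take a fixed $\beta_0\in(1,p)$ in place of $\beta$. Given $f\in\dot W^{1,p}$ and $t>0$, I fix a level $\lambda>0$ and take a Whitney decomposition $\{Q_i\}$ of the open set $\Omega_\lambda:=\{x:M_{\beta_0}(\nabla f)(x)>\lambda\}$, where $M_{\beta_0}h:=\big(M(|h|^{\beta_0})\big)^{1/\beta_0}$, with a subordinate smooth partition of unity $\{\chi_i\}$, $\Vert\nabla\chi_i\Vert_\infty\lesssim l(Q_i)^{-1}$. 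Set $g:=f\,\chi_{\mathbb R^n\setminus\Omega_\lambda}+\sum_i(f)_{Q_i^*}\chi_i$ and $b:=f-g=\sum_i\big(f-(f)_{Q_i^*}\big)\chi_i=:\sum_i b_i$. Standard Whitney/Poincaré estimates --- using that a fixed dilate of each $Q_i$ meets $\Omega_\lambda^{\mathsf c}$, where $M_{\beta_0}(\nabla f)\le\lambda$ --- show that $g\in\dot W^{1,\infty}$ with $\Vert\nabla g\Vert_\infty\lesssim\lambda$, while each $b_i$ is supported in the dilate $Q_i^*$ and $c_i^{-1}b_i$ is a Hardy--Sobolev $\beta_0$-atom with $c_i\simeq\lambda|Q_i|$; hence $\Vert b\Vert_{\dot{HS}^{1,\beta_0}_{atom}}\lesssim\sum_i c_i\simeq\lambda|\Omega_\lambda|$. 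Consequently $K(f,t,\dot{HS}^{1,\beta_0}_{atom},\dot W^{1,\infty})\lesssim\lambda\,|\{M_{\beta_0}(\nabla f)>\lambda\}|+t\lambda$. Choosing $\lambda:=\big(M_{\beta_0}(\nabla f)\big)^*(t)$, the decreasing rearrangement, forces $|\{M_{\beta_0}(\nabla f)>\lambda\}|\le t$ and leaves $K(f,t)\lesssim t\,\big(M_{\beta_0}(\nabla f)\big)^*(t)$. Taking $L^p\big(t^{-\theta p}\,\mathrm{d}t/t\big)$ norms, the weight collapses because $1-\theta=1/p$, so the left-hand side equals $\Vert M_{\beta_0}(\nabla f)\Vert_{L^p}$, which is $\lesssim\Vert\nabla f\Vert_{L^p}$ by the $L^p$-boundedness of $M_{\beta_0}$ for $p>\beta_0$.

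I expect the main obstacle to be the reverse inclusion, and inside it two points: (i) the $\beta$-independence of $\dot{HS}^{1,\cdot}_{atom}$, which amounts to subdividing one atom into an absolutely summable family of higher-integrability atoms by a further Calderón--Zygmund argument; and (ii) checking that each Whitney piece $b_i$ is a genuine Hardy--Sobolev $\beta_0$-atom with the sharp coefficient $c_i\simeq\lambda|Q_i|$ --- this requires $\Vert\nabla f\Vert_{L^{\beta_0}(Q_i^*)}\lesssim\lambda|Q_i|^{1/\beta_0}$ from the definition of $\Omega_\lambda$, together with $\Vert f-(f)_{Q_i^*}\Vert_{L^{\beta_0}(Q_i^*)}\lesssim l(Q_i)\Vert\nabla f\Vert_{L^{\beta_0}(Q_i^*)}$ from the Poincaré inequality, and estimating $\nabla g$ through the bounded overlap of the $Q_i^*$. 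All of this is carried out in \cite{badr_abstract_2010} (see also \cite{badr_atomic_2010}), so in the paper it is enough to cite it; the sketch above is only meant to explain why the identity is true.
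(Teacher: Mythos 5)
The paper does not prove this proposition at all: it is quoted verbatim from Theorem 0.4 of \cite{badr_abstract_2010}, so there is no internal argument to compare against. Your sketch is, in substance, a correct reconstruction of the proof in that reference: the easy inclusion via the boundedness of $\nabla$ from $\dot{HS}^{1,\beta}_{atom}$ to $L^1$ and from $\dot W^{1,\infty}$ to $L^\infty$ together with $(L^1,L^\infty)_{1-1/p,p}=L^p$, and the hard inclusion via the Calder\'on--Zygmund decomposition of a Sobolev function at the level sets of $M_{\beta_0}(\nabla f)$, yielding $K(f,t)\lesssim t\,\big(M_{\beta_0}(\nabla f)\big)^*(t)$ and hence the $L^p$ bound since $\beta_0<p$. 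Two small points to keep in mind if you ever wrote this out: besides the gradient bound, each normalized piece $c_i^{-1}b_i$ must also satisfy condition (iii) of \refdef{def:HardySpace} (the $L^1$ size bound $\Vert a\Vert_{L^1}\leq l(\Delta)$), which follows from the $L^1$ Poincar\'e inequality on $Q_i^*$ and the fact that a fixed dilate of $Q_i$ meets $\{M_{\beta_0}(\nabla f)\leq\lambda\}$; and the $\beta$-independence of the atomic space, which you correctly single out, is exactly what covers the exponents $\beta>p$ (in particular $\beta=\infty$), since the decomposition only produces $\beta_0$-atoms with $\beta_0<p$. For the purposes of this paper, citing \cite{badr_abstract_2010} (and \cite{badr_atomic_2010}) is indeed all that is required.
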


On the other hand, recall the definition of the area function \eqref{recall:areafunctiondef} and of the Carleson function \eqref{recall:carelsonfunctiondef} and define the tent spaces over \(\mathbb{R}^n\) with parameter \(1\leq p<\infty\) as
\begin{align*}
T^{p,2}(\Omega):=\{F\in L^2_{loc};\Vert F\Vert_{T^{p,2}}:=\Vert \mathcal{A}(F)\Vert_{L^p(\partial\Omega)}<\infty\}
\end{align*}
and
\begin{align*}
T^{\infty,2}(\Omega):=\{F\in L^2_{loc};\Vert F\Vert_{T^{\infty,2}}:=\Vert \mathcal{C}(F)\Vert_{L^\infty(\partial\Omega)}<\infty\}.
\end{align*}

By \cite{coifman_new_1985} we have
\begin{prop}[Theorem 4' in \cite{coifman_new_1985}]
    For every \(1<p<\infty\) the real interpolation space is
    \[(T^{1,2},T^{\infty,2})_{1-1/p,p}=T^{p,2}.\]
\end{prop}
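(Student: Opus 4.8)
The statement is the real‑interpolation version of the Coifman--Meyer--Stein tent‑space theorem, and I would prove it by establishing the two continuous inclusions \(T^{p,2}\hookrightarrow(T^{1,2},T^{\infty,2})_{\theta,p}\) and \((T^{1,2},T^{\infty,2})_{\theta,p}\hookrightarrow T^{p,2}\), where \(\theta:=1-1/p\). On the scalar side everything is classical: \(K(g,t;L^1,L^\infty)=\int_0^tg^*(s)\,ds\), and a weighted Hardy inequality (applicable because \(\theta p=p-1>0\) and \((1-\theta)p-1=0\)) gives \(\big(\int_0^\infty(t^{-\theta}\int_0^tg^*)^p\frac{dt}{t}\big)^{1/p}\approx\|g\|_{L^p}\), i.e.\ \((L^1,L^\infty)_{\theta,p}=L^p\). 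Hence the first inclusion follows once one proves, for \(F\in T^{p,2}\) and every \(t>0\),
\[K(F,t;T^{1,2},T^{\infty,2})\ \lesssim\ \int_0^t\mathcal{A}(F)^*(s)\,ds\ =\ K(\mathcal{A}(F),t;L^1,L^\infty).\]

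To prove this \(K\)-functional bound I would run a Calderón--Zygmund decomposition of \(F\) along the level sets of its area function. Fix \(t>0\), put \(\lambda=\mathcal{A}(F)^*(t)\), \(O_j=\{\mathcal{A}(F)>2^j\lambda\}\), and let \(\widehat{O_j}=\{(y,s)\in\Omega:\Delta(y,s)\subset O_j\}\) be the associated tents, so \(\widehat{O_{j+1}}\subset\widehat{O_j}\). Split \(F=F_0+F_1\) with \(F_1:=F\,\chi_{\Omega\setminus\widehat{O_0}}\) and \(F_0:=F\,\chi_{\widehat{O_0}}=\sum_{j\geq0}F\,\chi_{\widehat{O_j}\setminus\widehat{O_{j+1}}}\). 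The two estimates to check are \(\|F_1\|_{T^{\infty,2}}\lesssim\lambda\) and \(\|F\,\chi_{\widehat{O_j}\setminus\widehat{O_{j+1}}}\|_{T^{1,2}}\lesssim 2^j\lambda\,|O_j|\). Both are Fubini/covering arguments: a point of \(\Omega\setminus\widehat{O_j}\) lies in a cone \(\Gamma_\alpha(z)\) over some \(z\) with \(\mathcal{A}(F)(z)\leq 2^j\lambda\); conversely \(\int_E\mathcal{A}(F)^2\,dx\) dominates the weighted \(L^2\)-mass of \(F\) over \(\bigcup_{z\in E}\Gamma_\alpha(z)\); and one localizes the relevant cones using the enlargement \(O_j^*=\{M\chi_{O_j}>\tfrac12\}\) with \(|O_j^*|\lesssim|O_j|\) (here one also uses that \(L^p\)-norms of area functions of different apertures are comparable). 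Summing the (essentially geometric) series yields \(\|F_0\|_{T^{1,2}}\lesssim\lambda\sum_j2^j|\{\mathcal{A}(F)>2^j\lambda\}|\lesssim\int_{\{\mathcal{A}(F)>\lambda\}}(\mathcal{A}(F)-\lambda)\leq\int_0^t\mathcal{A}(F)^*\), while \(t\|F_1\|_{T^{\infty,2}}\lesssim t\lambda=t\,\mathcal{A}(F)^*(t)\leq\int_0^t\mathcal{A}(F)^*\); combining these gives the displayed bound and hence \(\|F\|_{(T^{1,2},T^{\infty,2})_{\theta,p}}\lesssim\|\mathcal{A}(F)\|_{L^p}=\|F\|_{T^{p,2}}\).

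For the reverse inclusion I would argue by duality, which is where the deeper tent‑space results of \cite{coifman_new_1985} enter: the atomic decomposition of \(T^{1,2}\) gives \((T^{1,2})^*=T^{\infty,2}\), \((T^{q,2})^*=T^{q',2}\) for \(1<q<\infty\), and \((T^{\infty,2}_0)^*=T^{1,2}\), with \(T^{\infty,2}_0\) the closure of \(T^{1,2}\cap T^{\infty,2}\) in \(T^{\infty,2}\). Since a finite fine index does not distinguish \(T^{\infty,2}\) from \(T^{\infty,2}_0\), we have \((T^{1,2},T^{\infty,2})_{\eta,q}=(T^{1,2},T^{\infty,2}_0)_{\eta,q}\) for \(q<\infty\); as \(T^{1,2}\cap T^{\infty,2}_0=T^{1,2}\cap T^{\infty,2}\) is dense in both endpoints, the real‑interpolation duality theorem applies to this pair. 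Running the \(K\)-functional bound of the previous paragraph at the conjugate exponent \(p'\) gives the dense inclusion \(T^{p',2}\hookrightarrow(T^{1,2},T^{\infty,2}_0)_{1-1/p',\,p'}\) (dense since \(T^{1,2}\cap T^{\infty,2}\subset T^{p',2}\) is dense in the target). Dualizing, and using \((A_1,A_0)_{\eta,q}=(A_0,A_1)_{1-\eta,q}\),
\[\big((T^{1,2},T^{\infty,2}_0)_{1-1/p',\,p'}\big)^*=(T^{\infty,2},T^{1,2})_{1-1/p',\,p}=(T^{1,2},T^{\infty,2})_{1/p',\,p}=(T^{1,2},T^{\infty,2})_{\theta,p},\]
so \((T^{1,2},T^{\infty,2})_{\theta,p}\hookrightarrow(T^{p',2})^*=T^{p,2}\), which is the missing inclusion.

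The main obstacle is this second inclusion, and concretely the fact that \(T^{\infty,2}\) is genuinely larger than \(\{F:\mathcal{A}(F)\in L^\infty\}\): one can have \(\|F\|_{T^{\infty,2}}<\infty\) but \(\mathcal{A}(F)\equiv\infty\), so the naive idea of "applying \(\mathcal{A}\) to the couple \((T^{1,2},T^{\infty,2})\)" fails at the endpoint, and one is forced to pass through the vanishing subspace \(T^{\infty,2}_0\) and invoke the full tent‑space duality of \cite{coifman_new_1985}. The covering and Fubini estimates in the decomposition step are standard in this circle of ideas but require some care with cone apertures and with the maximal‑function enlargements of the level sets.
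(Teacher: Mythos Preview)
The paper does not give its own proof of this proposition: it is stated as Theorem~4' in \cite{coifman_new_1985} and simply quoted as a known result. Your proposal is essentially a sketch of the original Coifman--Meyer--Stein argument (Calder\'on--Zygmund decomposition along the level sets of \(\mathcal{A}(F)\) for one inclusion, tent-space duality for the other), and it is correct in outline. One small point: the step ``a finite fine index does not distinguish \(T^{\infty,2}\) from \(T^{\infty,2}_0\)'' is doing real work---it is what lets you transport the \(K\)-functional bound from the pair \((T^{1,2},T^{\infty,2})\) to \((T^{1,2},T^{\infty,2}_0)\), and it deserves a line of justification (e.g.\ that \(A_0\cap A_1\) is dense in \((A_0,A_1)_{\theta,q}\) for \(q<\infty\), together with \(T^{1,2}\cap T^{\infty,2}\subset T^{\infty,2}_0\)).
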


\subsection{Proof of \reflemma{lemma:AreaFctBoundsInLp}}
Since for \(1<p<\infty\) the two spaces \(\dot{W}^{1,p}(\mathbb{R}^n)\) and \(\dot{L}^{p}_1(\mathbb{R}^n)\) are the same modulo constants with comparable norms, these real interpolation results can be used to proof \reflemma{lemma:AreaFctBoundsInLp}. First, we break the operator \(\partial_t\mathcal{P}_tf\) up into the sum of \(W_1f\) and \(W_2f\) as introduced in Section \ref{section:rho} and discuss the necessary bounds separately.

Specifically, we establish the following two corollaries, which immediately give the proof of \reflemma{lemma:AreaFctBoundsInLp}.
\begin{cor}\label{cor:AreaFctBoundsInLpW2}
    For every \(1<p<\infty\) and \(f\in \dot{L}^{p}_1(\partial\Omega)=\dot{W}^{1,p}(\mathbb{R}^n)\) we have
    \[\Vert \mathcal{A}(W_2f)\Vert_{L^p(\partial\Omega)}, \Vert \mathcal{A}(t\nabla_{||}W_2f)\Vert_{L^p(\partial\Omega)}\leq C\Vert \nabla_{||} f\Vert_{L^p(\partial\Omega)}.\]
\end{cor}
\begin{cor}\label{cor:AreaFctBoundsInLpW1}
    For every \(f\in \dot{L}^{p}_1(\partial\Omega)=\dot{W}^{1,p}(\mathbb{R}^n)\) we have
    \[\Vert \mathcal{A}(\nabla_{||}\mathcal{P}_tf)\Vert_{L^p(\partial\Omega)},\Vert \mathcal{A}(W_1f)\Vert_{L^p(\partial\Omega)}, \Vert \mathcal{A}(t\nabla_{||}W_1f)\Vert_{L^p(\partial\Omega)}\leq C\Vert \nabla_{||} f\Vert_{L^p(\partial\Omega)}.\]
\end{cor}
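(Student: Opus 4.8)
The plan is to prove \refcor{cor:AreaFctBoundsInLpW1} by the four--step scheme of the overview, carried out for $\nabla_{||}\mathcal P_t$, $W_1$ and $t\nabla_{||}W_1$ in parallel, with \refcor{cor:AreaFctBoundsInLpW2} --- together with the sharper $L^2$-- and $L^\infty$--Carleson bounds on $W_2$ from Section \ref{section:W2} --- used as a black box. A first reduction removes the gradient term: covering $\Gamma_\alpha(x)$ by Whitney balls $B((y,t),t/8)$, applying the Caccioppoli inequality \eqref{lemma:w_tSqFctBound1} on each, and enlarging the aperture (which changes $L^p$--norms of area functions only by a constant) gives $\Vert\mathcal A(t\nabla_{||}W_1 f)\Vert_{L^p}\lesssim\Vert\mathcal A(W_1 f)\Vert_{L^p}$, so it remains to bound $\mathcal A(W_1 f)$ and $\mathcal A(\nabla_{||}\mathcal P_t f)$.

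\textbf{Step 1 (the $L^2$ square function estimate), the crux.} Here $\Vert\mathcal A(W_1 f)\Vert_{L^2}^2\approx\int_0^\infty t\,\Vert L^t_{||}e^{-t^2L^t_{||}}f\Vert_{L^2}^2\,dt$, and for the frozen--coefficient operator this equals $\tfrac12\int_0^\infty\Vert L_{||}e^{-sL_{||}}f\Vert_{L^2}^2\,ds\approx\Vert L_{||}^{1/2}f\Vert_{L^2}^2\approx\Vert\nabla_{||}f\Vert_{L^2}^2$ by the $L^2$ functional calculus for elliptic operators and the solved Kato conjecture. To pass from the frozen operator $L^s_{||}$ to the genuine $L^t_{||}$ I would run a telescoping / quadratic--estimate argument on an energy functional such as $\phi(t):=\langle\AP(\cdot,t)\nabla_{||}\mathcal P_t f,\nabla_{||}\mathcal P_t f\rangle$, whose endpoint values are $\Vert\nabla_{||}f\Vert_{L^2}^2$ and $0$; differentiating $\phi$ reproduces the square function plus error terms carrying a factor $\partial_t\AP$, which are exactly of $W_2$--type. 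These are absorbed using the $L^2$--area bound for $W_2$ from \refcor{cor:AreaFctBoundsInLpW2}, the pointwise bound $|\partial_t A|\le C/t$, and the Carleson hypothesis \eqref{cond:ExplanationForCarlesonCondition} through the duality \eqref{eq:DualityCarelsonNormNontangential}, while the spatial truncations needed to justify the computation are controlled by the kernel bounds of \refprop{Prop:KernelBounds} and the off--diagonal estimates of \refprop{prop:Off-diagonalEstimates}. The analogous argument, with $\nabla_{||}e^{-t^2L^t_{||}}f$ replacing $L^t_{||}e^{-t^2L^t_{||}}f$, handles $\mathcal A(\nabla_{||}\mathcal P_t f)$.

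\textbf{Step 2 (the two endpoints).} For the $T^{\infty,2}$ endpoint I would fix a boundary ball $\Delta=\Delta(x_0,r)$, split $f=g+f_{\mathrm{far}}$ with $g:=(f-(f)_{C\Delta})\chi_{C\Delta}$, estimate the local part $\int_{T(\Delta)}|W_1 g|^2/t\lesssim|\Delta|\,\Vert\nabla_{||}f\Vert_{L^\infty}^2$ via Step 1 (using $\Vert\nabla_{||}g\Vert_{L^2}\lesssim|\Delta|^{1/2}\Vert\nabla_{||}f\Vert_{L^\infty}$) and the far part via the Gaussian decay of \refprop{prop:Off-diagonalEstimates} summed over the dyadic annuli $2^{j+1}\Delta\setminus2^j\Delta$, the $\partial_t\AP$--errors being absorbed as in Step 1; this gives $\Vert\mathcal C(W_1 f)\Vert_{L^\infty},\Vert\mathcal C(\nabla_{||}\mathcal P_t f)\Vert_{L^\infty}\lesssim\Vert\nabla_{||}f\Vert_{L^\infty}$. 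For the Hardy--Sobolev endpoint, let $a$ be a homogeneous Hardy--Sobolev $2$--atom on $\Delta$: over $2\Delta$, Cauchy--Schwarz and Step 1 give $\Vert\mathcal A(W_1 a)\Vert_{L^1(2\Delta)}\le|2\Delta|^{1/2}\Vert\mathcal A(W_1 a)\Vert_{L^2}\lesssim|\Delta|^{1/2}\Vert\nabla a\Vert_{L^2}\lesssim1$, and over $2^{j+1}\Delta\setminus2^j\Delta$ with $j\ge1$ one uses $\mathrm{supp}(a)\subset\Delta$, the $L^1\to L^2$ smoothing and off--diagonal decay of the semigroup, and $\Vert a\Vert_{L^1}\le\ell(\Delta)$ to get a bound $\lesssim2^{-j}$, summable in $j$; hence $\Vert\mathcal A(W_1 f)\Vert_{L^1},\Vert\mathcal A(\nabla_{||}\mathcal P_t f)\Vert_{L^1}\lesssim\Vert f\Vert_{\dot{HS}^{1,\beta}_{atom}}$.

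\textbf{Step 3 (real interpolation) and the main obstacle.} The maps $f\mapsto W_1 f$, $f\mapsto\nabla_{||}\mathcal P_t f$, $f\mapsto t\nabla_{||}W_1 f$ are linear, so by Step 2 and the identities $(\dot{HS}^{1,\beta}_{atom},\dot W^{1,\infty})_{1-1/p,p}=\dot W^{1,p}$ and $(T^{1,2},T^{\infty,2})_{1-1/p,p}=T^{p,2}$ together with the exact interpolation functor property of $K$, they are bounded $\dot W^{1,p}\to T^{p,2}$ for every $1<p<\infty$; since $\dot W^{1,p}=\dot L^p_1$ with comparable norms, this is the assertion. The hard part is Step 1: the $t$--dependence of $L^t_{||}$ destroys the direct functional--calculus proof of the quadratic estimate, and it is precisely in order to absorb the resulting $\partial_t\AP$--error terms that the $W_2$ estimates must be available first --- this is the real reason the bounds for $W_1$ (and for $\nabla_{||}\mathcal P_t$) depend on those for $W_2$. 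Once Step 1 is in place, Steps 2--3 are the standard localization--plus--real--interpolation argument.
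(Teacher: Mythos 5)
Your overall architecture (an $L^2$ area--function estimate, then the two endpoints $\dot{HS}^{1,\beta}_{atom}\to T^{1,2}$ and $\dot W^{1,\infty}\to T^{\infty,2}$, then real interpolation) is the paper's, and your treatment of $W_1$ is essentially sound: differentiating the energy $\phi(t)=\langle\AP(\cdot,t)\nabla_{||}\mathcal P_tf,\nabla_{||}\mathcal P_tf\rangle$ produces $-\tfrac1t\Vert W_1f\Vert_{L^2}^2$ up to a $W_2$--type error (absorbed via \refcor{cor:AreaFctBoundsInLpW2} at $p=2$) and a $\partial_t\AP$--error (handled by the Carleson duality \eqref{eq:DualityCarelsonNormNontangential} together with \refprop{prop:PROP11}); this is a mild variant of \refprop{prop:L^2SquareFctBoundW_1}, where the paper instead differentiates $\Vert sL^se^{-s^2L^s}f\Vert_{L^2}^2$ and the terms $II_1$--$II_4$ play exactly the roles you describe. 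Your Whitney/Caccioppoli reduction of $t\nabla_{||}W_1$ to $W_1$ and your endpoint and interpolation steps likewise track \reflemma{lemma:CarlesonFunctionBoundW_1}, \reflemma{lemma:HardyNormBoundsW_1} and Section \ref{section:RealInterpolation}.

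The genuine gap is the single sentence claiming that ``the analogous argument, with $\nabla_{||}e^{-t^2L^t_{||}}f$ replacing $L^t_{||}e^{-t^2L^t_{||}}f$, handles $\mathcal A(\nabla_{||}\mathcal P_tf)$.'' There is no such analogue: to realize $\tfrac1t\Vert\nabla_{||}\mathcal P_tf\Vert_{L^2}^2\approx\tfrac1t\langle L^t_{||}\mathcal P_tf,\mathcal P_tf\rangle$ as a $t$--derivative one must differentiate $\Vert\mathcal P_tf\Vert_{L^2}^2$ and divide by $t^2$, and the resulting boundary/weight terms at the lower truncation $t=a$ are of size $a^{-2}\Vert\mathcal P_af\Vert_{L^2}^2$, which do not stay bounded as $a\to0$; unlike $W_1f=2tL^t_{||}\mathcal P_tf$, the quantity $\Vert\nabla_{||}\mathcal P_tf\Vert_{L^2}$ has no smallness as $t\to0$ (it tends to $\Vert\nabla_{||}f\Vert_{L^2}$), so a global energy/functional--calculus argument cannot close at the bottom of the cone. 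This is exactly why \refprop{prop:L^2SquareFctBoundNablaP} is the most delicate proof in the paper: it truncates at height $a$, takes a dyadic decomposition $\mathcal D_k$ at scale $\approx a$, replaces $f$ on each cube by $f^Q=\chi_{2Q}(f-(f)_{2Q})$ so that Poincar\'e at scale $a$ converts the dangerous term $a^{-2}\Vert e^{-a^2L^a}f^Q\Vert_{L^2}^2$ into $\Vert\nabla_{||} f\Vert_{L^2(2Q)}^2$, treats the off--support pieces by the kernel and off--diagonal bounds, and runs the Carleson/nontangential duality for the $W_2$--type errors cube by cube with the away--truncated maximal function \eqref{def:NontangentialMaximalFucntionTruncatedAway}. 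Note also that the paper's logical order is the reverse of yours: \refprop{prop:L^2SquareFctBoundW_1} is deduced from \refprop{prop:L^2SquareFctBoundNablaP} (the term $I$ there), so the $\nabla_{||}\mathcal P_t$ estimate is the crux of Step 1 and is precisely what your proposal leaves unproved; the omission propagates to the endpoints as well, since \reflemma{lemma:HardyNormBoundsW_1}\eqref{item:HardySquareBoundNablaP} needs the additional hiding argument comparing $\mathcal A(\nabla_{||}\mathcal P_tf)$ with $\mathcal A(W_1f)$ and $\mathcal A(\mathcal P_tf/t)$ together with comparability of apertures, which your sketch does not supply.
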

It remains to prove these corollaries. By the method of real interpolation it suffices to establish the \(L^\infty\) to Carleson function bound \(\Vert \mathcal{C}(T_tf)\Vert_{L^\infty}\lesssim \Vert \nabla_{||} f\Vert_{L^\infty}\) and the Hardy-Sobolev space to \(L^1\) bound \(\Vert \mathcal{A}(T_tf)\Vert_{L^1}\lesssim \Vert \nabla_{||} f\Vert_{\dot{HS}^{1,\beta}_{atom}}\) for each of the five operators \(T_t\in\{\nabla_{||}\mathcal{P}_t,W_1,W_2,t\nabla_{||} W_1,t\nabla_{||} W_2\}\). Hence, we need to establish the following four lemmas to conclude \refcor{cor:AreaFctBoundsInLpW2} and \refcor{cor:AreaFctBoundsInLpW1}.

\begin{lemma}\label{lemma:CarlesonFunctionBoundW_2}
    Let \(f\in W^{1,\infty}(\mathbb{R}^n)\). Then
    \begin{enumerate}[(i)]
        \item \(\Vert C(W_2f)\Vert_{L^\infty}=\sup_{\Delta\subset\partial\Omega}\frac{1}{\sigma(\Delta)}\int_{T(\Delta)}\frac{|W_2f(x,t)|^2}{t}dxdt\lesssim \Vert\nabla_{||}f\Vert_{L^\infty}\), and\label{item:LocalSquareBoundW2}
        \item \(\Vert C(t\nabla W_2f)\Vert_{L^\infty}=\sup_{\Delta\subset\partial\Omega}\frac{1}{\sigma(\Delta)}\int_{T(\Delta)}\frac{|t\nabla_{||} W_2f(x,t)|^2}{t}dxdt\lesssim \Vert\nabla_{||}f\Vert_{L^\infty}\).\label{item:LocalSquareBoundNablaW2}
    \end{enumerate}
\end{lemma}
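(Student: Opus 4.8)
The plan is to obtain a pointwise-in-the-fiber bound on the local averages of $W_2 f$ and $t\nabla_{||} W_2 f$ and then integrate. Fix a boundary ball $\Delta = \Delta(x_0, r)$ and recall
\[
W_2 f(y,s) = \int_0^s 2\tau\, e^{(s^2-\tau^2)L_{||}^s}\,\mathrm{div}_{||}\big(\partial_s\AP(y,s)\nabla_{||} e^{-\tau^2 L_{||}^s}f(y)\big)\,d\tau.
\]
First I would dualize against the Carleson condition \eqref{cond:ExplanationForCarlesonCondition}. Using \eqref{eq:DualityCarelsonNormNontangential}, it suffices to control $\frac{1}{\sigma(\Delta)}\int_{T(\Delta)} |W_2 f(y,s)|^2 s^{-1}\, dy\, ds$ by splitting $|W_2 f|^2 s^{-1} = \big(|W_2 f|^2 s^{-1}\|\partial_s\AP(\cdot,s)\|_\infty^{-1}\big)\cdot \|\partial_s\AP(\cdot,s)\|_\infty$ and using that $\|\partial_t A\|_\infty \le C/t$ turns the product $s^{-1}\|\partial_s\AP\|_\infty$ into something like $|\partial_s\AP|$ up to the $L^\infty$-in-$x$ sup, which is a Carleson measure by hypothesis. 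Concretely, the goal reduces to a mean-valued nontangential estimate
\[
\Big(\fint_{B(y,s,s/2)} |W_2 f(z,\tau)|^2\, dz\, d\tau\Big)^{1/2} \lesssim \|\nabla_{||} f\|_{L^\infty},
\]
which is exactly the $W_2$-content sketched (but commented out) in \refcor{cor:NontangentialBoundsOnW_1andW_2}; invoking that plus the Carleson duality \eqref{eq:DualityCarelsonNormNontangential} with $F = |W_2 f|^2 s^{-1}\|\partial_s\AP\|_\infty^{-1}$ and $d\mu = \sup_{B(x,t,t/2)}|\partial_t A|\,dx\,dt$ yields part \eqref{item:LocalSquareBoundW2}.

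For the pointwise fiber bound itself I would proceed as in \cite{ulmer_solvability_2025}: decompose the source $\partial_s\AP(\cdot,s)\nabla_{||} e^{-\tau^2 L_{||}^s}f$ over annuli $2^{k}\Delta\setminus 2^{k-1}\Delta$ around the base point, move the $\mathrm{div}_{||}$ onto $e^{(s^2-\tau^2)L_{||}^s}$, and use the off-diagonal estimates of \refprop{prop:Off-diagonalEstimates} (which give a factor $(\sqrt{s^2-\tau^2})^{-1}e^{-\alpha 4^k \ell(\Delta)^2/(s^2-\tau^2)}$) together with the local bound \eqref{eq:NablaBounded} from \refprop{prop:PROP11} to control $\|\nabla_{||} e^{-\tau^2 L_{||}^s}f\|$ on each annulus by $\|\nabla_{||}f\|_{L^\infty}$ times the annulus volume. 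Summing the geometric series in $k$ and integrating $\int_0^s 2\tau\,(1 + \tau/\sqrt{s^2-\tau^2})\,d\tau \lesssim s$ gives $|W_2 f(y,s)| \lesssim \|\partial_s\AP(\cdot,s)\|_{L^\infty}\, s\, \|\nabla_{||}f\|_{L^\infty}$ in the averaged sense, and since $s\|\partial_s\AP\|_\infty \le C$, the square-function integral $\int_{T(\Delta)} |W_2 f|^2 s^{-1}\,dy\,ds \lesssim \|\nabla_{||}f\|_{L^\infty}^2 \int_{T(\Delta)} \|\partial_s\AP\|_\infty\,dy\,ds \lesssim \sigma(\Delta)\|\nabla_{||}f\|_{L^\infty}^2$ by the Carleson hypothesis directly — in fact this cleaner route avoids \eqref{eq:DualityCarelsonNormNontangential} altogether for part \eqref{item:LocalSquareBoundW2}.

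For part \eqref{item:LocalSquareBoundNablaW2}, the natural approach is the Caccioppoli-type inequality \eqref{eq:secondsummand} from \refprop{prop:CacciopolliTypeInequality}: on $B(y,s,s/2)$ one has $\fint |\nabla_{||} W_2 f|^2 \lesssim s^{-2}\fint_{B(y,s,s)} |W_2 f|^2 + \|\partial_s\AP\|_{L^\infty(B(y,s,s))}^2 M[\nabla_{||}f]^2$. Multiplying by $t^2/t = t$ and integrating over $T(\Delta)$, the first term reproduces the square-function quantity from \eqref{item:LocalSquareBoundW2} (up to a harmless enlargement of the Carleson region, handled by covering $T(\Delta)$ by finitely many dilated tents or by a Whitney decomposition), and the second term contributes $\int_{T(\Delta)} s\,\|\partial_s\AP\|_\infty^2\, M[\nabla_{||}f]^2\,dy\,ds \lesssim \|\nabla_{||}f\|_{L^\infty}^2 \int_{T(\Delta)}\|\partial_s\AP\|_\infty\,dy\,ds \lesssim \sigma(\Delta)\|\nabla_{||}f\|_{L^\infty}^2$, again using $s\|\partial_s\AP\|_\infty \le C$ and the Carleson bound. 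I expect the main obstacle to be bookkeeping rather than conceptual: making the annular decomposition and the interplay of the off-diagonal decay with the $1/t$ growth of $|\partial_t A|$ precise (the resolvent-type integral $\int_0^s 2\tau(1+\tau/\sqrt{s^2-\tau^2})\,d\tau$ has an integrable singularity at $\tau = s$ which must be tracked carefully), and ensuring the enlargement of $T(\Delta)$ needed to apply \eqref{eq:secondsummand} costs only a bounded factor — both are standard but tedious.
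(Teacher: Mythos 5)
There is a genuine gap in your treatment of part \eqref{item:LocalSquareBoundW2}, and it is exactly the point the paper is careful about. Your key step bounds the local contribution by \(|W_2f(y,s)|\lesssim \Vert\partial_s \AP(\cdot,s)\Vert_{L^\infty(\mathbb{R}^n)}\, s\,\Vert\nabla_{||}f\Vert_{L^\infty}\) and then concludes via \(\int_{T(\Delta)}\Vert\partial_s \AP(\cdot,s)\Vert_{L^\infty}\,dy\,ds\lesssim\sigma(\Delta)\) ``by the Carleson hypothesis directly.'' But the standing hypothesis \eqref{cond:L1CarlesonCondOnPartialtA}/\eqref{cond:ExplanationForCarlesonCondition} only makes the \emph{Whitney-ball} supremum \(\sup_{(y,t)\in B(x,t,t/2)}|\partial_t A|\) a Carleson measure; it says nothing about the global-in-\(x\) norm \(\Vert\partial_t A(\cdot,t)\Vert_{L^\infty(\mathbb{R}^n)}\), which can be far larger. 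Requiring \(\Vert\partial_t A(\cdot,t)\Vert_{L^\infty(\mathbb{R}^n)}\,dx\,dt\) to be Carleson is essentially the mixed \(L^1\)--\(L^\infty\) condition \eqref{cond:mixedL1LInftyCond}, a strictly stronger assumption than the one in force here (this is precisely why the paper stresses that the adaptation from \cite{ulmer_solvability_2025} requires new work). For the same reason you cannot lean on a nontangential bound \(\tilde N(W_2f)\lesssim\Vert\nabla_{||}f\Vert_{L^\infty}\): the stated \refcor{cor:NontangentialBoundsOnW_1andW_2} contains no \(W_2\) estimate, and the commented-out sketch of it also used \(\Vert\partial_s\AP(\cdot,s)\Vert_{L^\infty}\). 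The missing idea is how to keep the \(x\)-dependence of \(\partial_s\AP\) when the nonlocal operator \(e^{-(s^2-\tau^2)L^s_{||}}\mathrm{div}\) is applied: the paper splits the inner \(L^2\) norm by off-diagonal estimates into a far part (where the crude bound \(|\partial_s A|\le C/s\) is affordable thanks to the Gaussian decay) and a local part, which it decomposes into dyadic cubes \(Q\) at scale \(\tau\), pairing \(\sup_Q|\partial_s\AP|\) with \(\inf_Q M[\nabla_{||}f]\) so that the local term becomes \(\big(\int_{3\Delta}\sup_{B(x,s,s/2)}|\partial_t\AP|^2\,M[\nabla_{||}f]^2\,dx\big)^{1/2}\); only then does \(|\partial_t A|\le C/t\) (so that \(s\,\sup^2\lesssim\sup\)) together with the Whitney-sup Carleson condition (via \eqref{eq:DualityCarelsonNormNontangential}, or directly after bounding \(M[\nabla_{||}f]\le\Vert\nabla_{||}f\Vert_{L^\infty}\)) close the estimate. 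Your proposal skips this localization and therefore proves the lemma only under \eqref{cond:mixedL1LInftyCond}, not under the hypothesis of \refthm{MAINTHM}.

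Your part \eqref{item:LocalSquareBoundNablaW2} is structurally the same as the paper's (Caccioppoli-type inequality \refprop{prop:CacciopolliTypeInequality} reducing to part \eqref{item:LocalSquareBoundW2} plus a Carleson term), and it is fine provided you keep the supremum in \eqref{eq:secondsummand} as the local one \(\Vert\partial_s\AP\Vert_{L^\infty(B(x,t,2r))}\) (which that inequality indeed provides) rather than the global \(L^\infty_x\) norm; but it inherits the gap from part \eqref{item:LocalSquareBoundW2}.
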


\begin{lemma}\label{lemma:HardyNormBoundsW_2}
    Let \(f\in \dot{HS}_{atom}^{1,\beta}(\mathbb{R}^n)\) for \(2<\beta<\infty\). Then we have
    \begin{enumerate}[(i)]
        \item \(\Vert\mathcal{A}(W_2f)\Vert_{L^1(\mathbb{R}^n)}\lesssim \Vert \nabla_{||}f\Vert_{\dot{HS}_{atom}^{1,\beta}(\mathbb{R}^n)},\) and \label{item:HardySquareBound}
        \item \(\Vert\mathcal{A}(t\nabla W_2f)\Vert_{L^1(\mathbb{R}^n)}\lesssim \Vert \nabla_{||}f\Vert_{\dot{HS}_{atom}^{1,\beta}(\mathbb{R}^n)}\).\label{item:HardySquareBoundNabla}
    \end{enumerate}
\end{lemma}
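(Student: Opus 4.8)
\textbf{Proof proposal for Lemma \ref{lemma:HardyNormBoundsW_2}.}

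The plan is to reduce to a single Hardy--Sobolev atom by linearity and the triangle inequality in $L^1$: if $\nabla_{||}f=\sum_j\lambda_j a_j$ with $\sum_j|\lambda_j|\le 2\Vert\nabla_{||}f\Vert_{\dot{HS}^{1,\beta}_{atom}}$, then it suffices to prove $\Vert\mathcal{A}(W_2f)\Vert_{L^1}\lesssim 1$ and $\Vert\mathcal{A}(t\nabla_{||}W_2f)\Vert_{L^1}\lesssim 1$ whenever $\nabla_{||}f$ is a single $\beta$-atom $a$ associated to a ball $\Delta=\Delta(x_0,r)$. (One should be a little careful that $W_2$ is linear in $f$, not in $\nabla_{||}f$; but since $W_2f$ depends on $f$ only through $\nabla_{||}e^{-\tau^2 L^t_{||}}f$, which in turn only sees $\nabla_{||}f$, the operator does descend to a linear operator of $\nabla_{||}f$, so the atomic decomposition can be applied.) Fix such an atom; write $g:=\nabla_{||}f$, so $\Vert g\Vert_{L^\beta}\le|\Delta|^{-1/\beta'}$, $\Vert g\Vert_{L^1}\le l(\Delta)$, and $\supp g\subset\Delta$.

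The core of the argument is a standard ``local vs.\ far-away'' splitting of the $L^1(\mathbb{R}^n)$ norm of the area function. On the local part, the region $2\Delta$ (or a fixed dilate), one uses Cauchy--Schwarz in $x$ to bound $\Vert\mathcal{A}(W_2f)\Vert_{L^1(2\Delta)}\le|2\Delta|^{1/2}\Vert\mathcal{A}(W_2f)\Vert_{L^2(\mathbb{R}^n)}$, and then invokes the $L^2\to L^2$ area-function bound for $W_2$ together with $\Vert\nabla_{||}f\Vert_{L^2}=\Vert g\Vert_{L^2}\lesssim|\Delta|^{1/2-1/\beta}|\Delta|^{-1/\beta'+\cdots}$; more precisely, interpolating the $L^1$ and $L^\beta$ atom bounds gives $\Vert g\Vert_{L^2}\lesssim|\Delta|^{-1/2}\cdot l(\Delta)^{?}$, and the scaling works out so that $|2\Delta|^{1/2}\Vert g\Vert_{L^2}\lesssim 1$. (Here I would use the already-established $L^2$ area bound for $W_2$ that the overview attributes to Section \ref{section:W2}, i.e.\ the $L^\beta\to L^\beta$ bound for $\beta\ge 2$ specialized to $\beta=2$, and likewise its $t\nabla_{||}$ version, which follows from the Caccioppoli-type inequality \eqref{eq:secondsummand} combined with \eqref{eq:NablaBounded}.) For the far-away part $\mathbb{R}^n\setminus 2\Delta$, one decomposes into dyadic annuli $C_k=2^{k+1}\Delta\setminus 2^k\Delta$ and estimates $\Vert\mathcal{A}(W_2f)\Vert_{L^1(C_k)}$ using the Gaussian/off-diagonal decay in Proposition \ref{prop:Off-diagonalEstimates} and the kernel bounds of Proposition \ref{Prop:KernelBounds}: the point $(y,s)$ in the cone over $x\in C_k$ has $|y-x_0|\sim 2^k r$, so both $e^{-\tau^2 L^s}g$ and the Duhamel propagator $e^{(s^2-\tau^2)L}$ contribute factors like $e^{-\beta 4^k r^2/s^2}$, and integrating $\int_{\Gamma(x)}\frac{|W_2f|^2}{s^{n+1}}$ against the Carleson weight $\sup_{B(y,s,s/2)}|\partial_t A|$ produces a geometrically summable series in $k$. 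The $L^1$-Carleson condition \eqref{cond:ExplanationForCarlesonCondition} on $|\partial_t A|$, together with the duality inequality \eqref{eq:DualityCarelsonNormNontangential}, is exactly what converts the weight $|\partial_t\AP|$ appearing through $W_2$ into something controlled by $\Vert\tilde N(\cdot)\Vert_{L^1}$, and the atom normalization $\Vert g\Vert_{L^1}\le l(\Delta)$ supplies the scale that makes the bound dimensionless.

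For part \eqref{item:HardySquareBoundNabla}, the $t\nabla_{||}W_2f$ estimate, I would not redo the whole argument but instead use the Caccioppoli-type inequality \eqref{eq:secondsummand}: on a Whitney ball $B(y,s,s/2)$ inside the cone, $\fint|\nabla_{||}W_2f|^2\lesssim s^{-2}\fint_{B(y,s,s)}|W_2f|^2 + \Vert\partial_s\AP\Vert_{L^\infty(B)}^2 M[\nabla_{||}f]^2(x)$, so $\mathcal{A}(t\nabla_{||}W_2f)(x)^2\lesssim \mathcal{A}(W_2f)(x)^2$ (with a harmlessly enlarged aperture) plus a term $\int_{\Gamma(x)} \Vert\partial_s\AP\Vert_{L^\infty}^2 M[\nabla_{||}f]^2(x)\,\frac{s\,dyds}{s^{n+1}}$ which, after integrating the $x$-independent part, is controlled by $M[\nabla_{||}f](x)^2\big(\int_0^\infty\Vert\partial_s\AP(\cdot,s)\Vert_{L^\infty}^2 s\,ds\big)$ --- wait, that uses the $L^1$--$L^\infty$ condition rather than the Carleson condition, so instead I would keep the $L^\infty$ norm inside and use the Carleson-measure property \eqref{eq:DualityCarelsonNormNontangential} applied to $F=M[\nabla_{||}f]^2$ (whose nontangential maximal function is again $\lesssim M[\nabla_{||}f]^2$), reducing the extra term to $\Vert\partial_t A\Vert_{\mathcal{C}}\Vert M[\nabla_{||}f]^2\Vert_{L^1}$ on the local ball and to the dyadic-annulus sum on the far part; on a single atom $\Vert M[g]^2\Vert_{L^1(2\Delta)}\lesssim\Vert M[g]\Vert_{L^2}^2\lesssim\Vert g\Vert_{L^2}^2$ which is again $\lesssim|\Delta|^{-1}$, giving the dimensionless bound after multiplying by $|2\Delta|$. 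The main obstacle I anticipate is bookkeeping the scaling in the far-away dyadic sum: one must check that the $k$-dependent losses (powers of $2^k$ from the volume of $C_k$ and from Poincaré/averaging estimates on $g$ over dilated balls, as in the computation sketched in the commented-out proof of Corollary \ref{cor:NontangentialBoundsOnW_1andW_2}) are beaten by the Gaussian factor $e^{-c4^k r^2/s^2}$ uniformly after integrating in $s$ over the relevant range $s\lesssim 2^k r$, and that the Carleson norm is extracted with the correct power of $l(\Delta)$; getting the interplay of the two cutoffs (the support of $g$ in $\Delta$ and the location of the cone vertex in $C_k$) right is the delicate part, but it is entirely parallel to the $W_2$ Carleson-function estimate of Lemma \ref{lemma:CarlesonFunctionBoundW_2}, which should be proved first and then reused.
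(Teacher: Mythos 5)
There is a genuine gap in your treatment of the local part. You propose to bound \(\Vert\mathcal{A}(W_2f)\Vert_{L^1(2\Delta)}\) by Cauchy--Schwarz together with an ``already-established \(L^2\to L^2\) area bound for \(W_2\)'', obtained by ``specializing the \(L^\beta\to L^\beta\) bound to \(\beta=2\)''. No such \(L^2\) bound is available at this stage: \refcor{lemma:HardyNormBoundsW_2beta} is proved only for \(2<\beta<\infty\), because it rests on the comparison \(\Vert\mathcal{A}(g)\Vert_{L^\beta}\lesssim\Vert\mathcal{C}(g)\Vert_{L^\beta}\) (Theorem 6.1 in \cite{milakis_harmonic_2013}), which holds only for exponents strictly greater than \(2\); and the full-range bound \refcor{cor:AreaFctBoundsInLpW2} (which would give \(p=2\)) is itself a consequence, via real interpolation, of the very lemma you are proving, so invoking it here is circular. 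This is exactly the structural reason the lemma is stated for \(\beta\)-atoms with \(\beta>2\) rather than \(2\)-atoms. The repair is what the paper does: estimate the local piece by H\"older with exponent \(\beta\), \(\Vert\mathcal{A}(W_2f)\Vert_{L^1(5\Delta)}\lesssim\Vert\mathcal{A}(W_2f)\Vert_{L^\beta}\,|\Delta|^{1/\beta'}\lesssim\Vert\nabla_{||}f\Vert_{L^\beta}\,|\Delta|^{1/\beta'}\lesssim 1\), using the available \(\beta>2\) bound and the atom normalization \(\Vert\nabla_{||}a\Vert_{L^\beta}\leq|\Delta|^{-1/\beta'}\).

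The far-away part is also only a plan, and the plan points in a slightly wrong direction: the paper does not run the off-diagonal regime through the Carleson duality \eqref{eq:DualityCarelsonNormNontangential} at all, but uses the pointwise bound \(|\partial_tA|\leq C/t\) together with kernel bounds, a dualizing function, off-diagonal estimates and Poincar\'e, splitting each cone with vertex in the annulus \(2^{j+1}\Delta\setminus2^j\Delta\) at height \(2^{j-1}l(\Delta)\), and then verifying by explicit computation that every annulus contributes at most \(C\,2^{-j(n+1)}l(\Delta)^{-n}\), which is summable against the annulus measure \((2^{j}l(\Delta))^n\). This bookkeeping, which you acknowledge but do not carry out, is the main technical content of the proof; without it (and without a correct local step) the proposal does not establish part (i). Your reduction to a single atom and your derivation of part (ii) from part (i) via the Caccioppoli-type inequality \refprop{prop:CacciopolliTypeInequality}, handling the extra \(\Vert\partial_s\AP\Vert_{L^\infty}^2M[\nabla_{||}f]^2\) term with the Carleson-measure duality, do agree with the paper and are fine.
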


\begin{lemma}\label{lemma:CarlesonFunctionBoundW_1}
    Let \(f\in W^{1,\infty}(\mathbb{R}^n)\), then
    \begin{enumerate}[(i)]
        \item \(\Vert C(\nabla\mathcal{P}_tf)\Vert_{L^\infty}=\sup_{\Delta\subset\partial\Omega}\frac{1}{\sigma(\Delta)}\int_{T(\Delta)}\frac{|\nabla_{||}\mathcal{P}_tf(x,t)|^2}{t}dxdt\lesssim \Vert\nabla_{||}f\Vert_{L^\infty}\), \label{item:LocalSquareBoundNablaP}
        \item \(\Vert C(W_1f)\Vert_{L^\infty}=\sup_{\Delta\subset\partial\Omega}\frac{1}{\sigma(\Delta)}\int_{T(\Delta)}\frac{|W_1f(x,t)|^2}{t}dxdt\lesssim \Vert\nabla_{||}f\Vert_{L^\infty}\), and\label{item:LocalSquareBoundW1}
        \item \(\Vert C(t\nabla W_1f)\Vert_{L^\infty}=\sup_{\Delta\subset\partial\Omega}\frac{1}{\sigma(\Delta)}\int_{T(\Delta)}\frac{|t\nabla_{||} W_1f(x,t)|^2}{t}dxdt\lesssim \Vert\nabla_{||}f\Vert_{L^\infty}\).\label{item:LocalSquareBoundNablaW1}
    \end{enumerate}
\end{lemma}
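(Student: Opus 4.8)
\textbf{Plan for the proof of \reflemma{lemma:CarlesonFunctionBoundW_1}.}
The goal is a pointwise-in-$\Delta$ Carleson bound for the three quantities $\nabla_{||}\mathcal{P}_tf$, $W_1f=tL_{||}^te^{-t^2L_{||}^t}f$, and $t\nabla_{||}W_1f$. Fix a boundary ball $\Delta=\Delta(x_0,r)$ and write $f=f-(f)_{2\Delta}$ inside the integral, which is harmless since all three operators annihilate constants (for $\nabla_{||}\mathcal{P}_t$ and its derivatives this is clear; for $W_1f=2tL^t_{||}e^{-t^2L^t_{||}}f$ as well, since $L^t_{||}$ kills constants). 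The standard route is a Littlewood--Paley / quasi-orthogonality argument over dyadic time scales combined with the off-diagonal estimates of \refprop{prop:Off-diagonalEstimates} and the local bounds of \refprop{prop:PROP11}. Concretely, for each of the operators $T_t$ I would split $f-(f)_{2\Delta}=\sum_{k\ge 1}g_k$ with $g_k:=\chi_{2^{k+1}\Delta\setminus 2^k\Delta}(f-(f)_{2\Delta})$ together with a local piece $g_0:=\chi_{2\Delta}(f-(f)_{2\Delta})$, estimate $\Vert T_tg_k\Vert_{L^2(\Delta)}$ using off-diagonal decay $e^{-\alpha 4^k r^2/t^2}$ (valid for $t\lesssim 2^kr$, i.e. on the Carleson region), and control $\Vert \nabla_{||}g_k\Vert_{L^2}\lesssim (2^kr)^{n/2}\Vert\nabla_{||}f\Vert_{L^\infty}$ using that the average of $f$ over annuli differs from $(f)_{2\Delta}$ by $\lesssim 2^kr\Vert\nabla_{||}f\Vert_{L^\infty}$ (a telescoping-of-averages estimate, exactly as in the commented-out computation in the excerpt). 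Summing the resulting geometric-type series in $k$ after integrating $\int_0^r \frac{dt}{t}\cdot(\text{Gaussian in } 4^kr^2/t^2)$ gives the bound $\lesssim r^n\Vert\nabla_{||}f\Vert_{L^\infty}^2=\sigma(\Delta)\Vert\nabla_{||}f\Vert_{L^\infty}^2$ as required.

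For item \eqref{item:LocalSquareBoundNablaP}, i.e. $\nabla_{||}\mathcal{P}_tf$, one cleanly reduces to square-function theory: write $\nabla_{||}\mathcal{P}_tf=\nabla_{||}e^{-t^2L^t_{||}}f$ and use that $\sqrt{t}\,\nabla_{||}e^{-tL_{||}}$ enjoys $L^2$-boundedness (\refprop{prop:L2boundednessOfSemigroupOperators}) and off-diagonal estimates, so that $\int_{T(\Delta)}|\nabla_{||}\mathcal{P}_tf|^2\frac{dxdt}{t}$ is a Littlewood--Paley square-function integral; the $t$-dependence of $L^t_{||}$ only produces an extra $W_2$-type error term, which is handled by \eqref{item:LocalSquareBoundW2} of \reflemma{lemma:CarlesonFunctionBoundW_2}, already available. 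Alternatively, and more robustly, since $\nabla_{||}\mathcal{P}_tf=\nabla_{||}(\mathcal{P}_tf-(f)_{2\Delta})$ one may use \refprop{prop:PROP11}(iii), which gives $\fint_{\Delta_{\alpha t/2}}|\nabla_{||}\mathcal{P}_tf|^2\lesssim M[\nabla_{||}f]^2$, hence a pointwise-in-the-cone bound by $\Vert\nabla_{||}f\Vert_{L^\infty}^2$, and then the Carleson integral is trivially $\lesssim\sigma(\Delta)\Vert\nabla_{||}f\Vert_{L^\infty}^2$ after a Fubini argument; however this crude bound loses the decay needed for the square-function sum unless one is a bit careful, so I expect the honest Littlewood--Paley computation to be needed for at least one of the three. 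For item \eqref{item:LocalSquareBoundW1}, $W_1f=2tL^t_{||}e^{-t^2L^t_{||}}f$, the key point is that $tL_{||}e^{-t^2L_{||}}$ factors through $\sqrt{t}\,\nabla_{||}e^{-\frac{t^2}{2}L_{||}}$ composed with $\sqrt{t}\,\mathrm{div}_{||}\bigl(\AP\,e^{-\frac{t^2}{2}L_{||}}\bigr)$-type operators, both of which satisfy $L^2$-bounds and Gaussian off-diagonal estimates; the quasi-orthogonality/$T(1)$-type estimate then reads $\int_{T(\Delta)}|W_1f|^2\frac{dxdt}{t}\lesssim\Vert\nabla_{||}f\Vert^2_{L^\infty}\sigma(\Delta)$.

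For item \eqref{item:LocalSquareBoundNablaW1}, $t\nabla_{||}W_1f$, I would use the Caccioppoli inequality \eqref{lemma:w_tSqFctBound1} from \refprop{prop:CacciopolliTypeInequality}: on each Whitney ball $B(y,t,t/2)$ one has $\fint_{B(y,t,t/4)}|t\nabla_{||}W_1f|^2\lesssim\fint_{B(y,t,t/2)}|W_1f|^2$, so that after covering $T(\Delta)$ by Whitney balls and applying a Fubini/enlargement argument the Carleson bound for $t\nabla_{||}W_1f$ follows from the Carleson bound for $W_1f$ on a slightly enlarged tent, i.e. from \eqref{item:LocalSquareBoundW1}. (One should enlarge $\Delta$ by a fixed factor; since the Carleson bound is uniform in $\Delta$, this is harmless.) The main obstacle I anticipate is the bookkeeping in the quasi-orthogonality step for $W_1$: because $L^t_{||}$ genuinely depends on $t$, the clean semigroup identities $e^{-sL}e^{-(t-s)L}=e^{-tL}$ fail and must be replaced by Duhamel, reintroducing $\partial_t\AP$ and hence a $W_2$-type remainder; controlling that remainder requires invoking \reflemma{lemma:CarlesonFunctionBoundW_2}\eqref{item:LocalSquareBoundW2} and the $|\partial_t\AP|\le C/t$ hypothesis, and making sure the error does not circularly require the bound being proved. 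Organizing the three estimates so that \eqref{item:LocalSquareBoundNablaP} and \eqref{item:LocalSquareBoundW1} are proved first (using only \reflemma{lemma:CarlesonFunctionBoundW_2} and the $L^2$/off-diagonal facts) and \eqref{item:LocalSquareBoundNablaW1} is deduced from \eqref{item:LocalSquareBoundW1} via Caccioppoli is the way to avoid circularity.
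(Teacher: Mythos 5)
Your treatment of the far-away part (off-diagonal decay from \refprop{prop:Off-diagonalEstimates} combined with the telescoping Poincar\'{e} estimate on annuli, summed over $k$) matches the paper's away-part argument, and deducing \eqref{item:LocalSquareBoundNablaW1} from \eqref{item:LocalSquareBoundW1} via the Caccioppoli inequality \eqref{lemma:w_tSqFctBound1} on Whitney balls over a slightly enlarged tent is exactly the intended route; you also correctly discard the idea that \refprop{prop:PROP11} alone could give the bound.

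The gap is the local piece $g_0=\chi_{2\Delta}(f-(f)_{2\Delta})$ (in the paper, $(f-(f)_{3\Delta})\eta$). For this piece off-diagonal estimates give nothing, and what is needed is precisely the $L^2$--$L^2$ area function bound
\[
\int_0^\infty\big\Vert \nabla_{||}e^{-t^2L^t_{||}}g\big\Vert_{L^2}^2\frac{dt}{t}\lesssim\Vert\nabla_{||}g\Vert^2_{L^2},
\qquad
\int_0^\infty\big\Vert tL^t_{||}e^{-t^2L^t_{||}}g\big\Vert^2_{L^2}\frac{dt}{t}\lesssim\Vert\nabla_{||}g\Vert^2_{L^2},
\]
i.e.\ \refprop{prop:L^2SquareFctBoundNablaP} and \refprop{prop:L^2SquareFctBoundW_1}, which you never invoke. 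Your plan treats this as a routine Littlewood--Paley/quasi-orthogonality step following from \refprop{prop:L2boundednessOfSemigroupOperators} and off-diagonal bounds, with the $t$-dependence contributing only a ``$W_2$-type error handled by \reflemma{lemma:CarlesonFunctionBoundW_2}''. That is not the case: even for $t$-independent coefficients, the estimate with weight $dt/t$ and $\Vert\nabla_{||}g\Vert_{L^2}$ on the right is a Kato-type square-function bound, not a consequence of uniform-in-$t$ $L^2$-boundedness (integrating the uniform bound against $dt/t$ diverges and produces the wrong norm); and for the $t$-dependent family the cross-scale semigroup identities fail, the Duhamel correction produces bilinear error terms in which $\partial_s\AP$ is paired with gradients of semigroups applied to the localized data, and the paper controls these through the Carleson-measure/nontangential-maximal-function duality \eqref{eq:DualityCarelsonNormNontangential} together with the hypothesis \eqref{cond:L1CarlesonCondOnPartialtA} --- not through the Carleson bound for $W_2f$ itself. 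Your $T(1)$-type factorization of $tL^t_{||}e^{-t^2L^t_{||}}$ works at each fixed $t$ but does not resolve the orthogonality across scales, so as written the key estimate is assumed rather than proved. The repair (and the paper's actual proof) is short once the two $L^2$ area-function propositions are in hand: split $f-(f)_{3\Delta}$ with a cutoff, bound the local term by those propositions plus Poincar\'{e}, and treat the far term exactly as you propose.
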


\begin{lemma}\label{lemma:HardyNormBoundsW_1}
    Let \(f\in \dot{HS}_{atom}^{1,2}(\mathbb{R}^n)\). Then
    \begin{enumerate}[(i)]
        \item \(\Vert\mathcal{A}(\nabla_{||}\mathcal{P}_tf)\Vert_{L^1(\mathbb{R}^n)}\lesssim \Vert \nabla_{||}f\Vert_{\dot{HS}^{1,2}(\mathbb{R}^n)}\)\label{item:HardySquareBoundNablaP}
        \item \(\Vert\mathcal{A}(W_1f)\Vert_{L^1(\mathbb{R}^n)}\lesssim \Vert \nabla_{||}f\Vert_{\dot{HS}^{1,2}(\mathbb{R}^n)}\)\label{item:HardySquareBoundW1}
        \item \(\Vert\mathcal{A}(t\nabla W_1f)\Vert_{L^1(\mathbb{R}^n)}\lesssim \Vert \nabla_{||}f\Vert_{\dot{HS}_{atom}^{1,2}(\mathbb{R}^n)}\)\label{item:HardySquareBoundNablaW1}
    \end{enumerate}
\end{lemma}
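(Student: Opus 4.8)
The statement is an $H^1$-type endpoint estimate, so the plan is to argue by atomic decomposition. Since $F\mapsto\mathcal{A}(F)$ is subadditive and each $T_t\in\{\nabla_{||}\mathcal{P}_t,W_1,t\nabla_{||}W_1\}$ is linear, it suffices to establish a uniform bound $\Vert\mathcal{A}(T_ta)\Vert_{L^1(\mathbb{R}^n)}\le C$ over all homogeneous Hardy--Sobolev $2$-atoms $a$; writing $f=\sum_j\lambda_j a_j$ with $\sum_j|\lambda_j|\le 2\Vert f\Vert_{\dot{HS}^{1,2}_{atom}}$ and summing then yields the lemma. Moreover, combining the Caccioppoli inequality \eqref{lemma:w_tSqFctBound1} with a Whitney decomposition of the cone $\Gamma_\alpha(x)$ — the factor $s^2$ in $|t\nabla_{||}W_1 a|^2$ cancels the $s^{-2}$ produced by Caccioppoli — yields the pointwise estimate $\mathcal{A}(t\nabla_{||}W_1 a)(x)\lesssim\mathcal{A}_{2\alpha}(W_1 a)(x)$, so part (iii) reduces to part (ii) once we use that $L^1$-norms of area functions with comparable apertures are equivalent. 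Hence we only treat $W_1$ and $\nabla_{||}\mathcal{P}_t$.

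Fix an atom $a$ with $\supp a\subset\Delta:=\Delta(x_0,r)$, $\Vert\nabla_{||}a\Vert_{L^2}\le|\Delta|^{-1/2}$, $\Vert a\Vert_{L^1}\le r$, and decompose $\mathbb{R}^n=4\Delta\cup\bigcup_{k\ge2}A_k$ with $A_k:=2^{k+1}\Delta\setminus 2^k\Delta$. On the local region, Cauchy--Schwarz together with the $L^2\to L^2$ area-function bounds $\Vert\mathcal{A}(T_t g)\Vert_{L^2}\lesssim\Vert\nabla_{||}g\Vert_{L^2}$ for $T_t\in\{W_1,\nabla_{||}\mathcal{P}_t\}$ established earlier in Section \ref{section:W1} gives $\int_{4\Delta}\mathcal{A}(T_t a)\le|4\Delta|^{1/2}\Vert\mathcal{A}(T_t a)\Vert_{L^2}\lesssim|\Delta|^{1/2}\Vert\nabla_{||}a\Vert_{L^2}\le C$. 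For the far regions the key input is the $L^1\!\to\!L^2$ off-diagonal bound
\[\Vert T_s a\Vert_{L^2(E)}\lesssim s^{-1-\frac{n}{2}}\,e^{-c\,\Dist(E,\Delta)^2/s^2}\,\Vert a\Vert_{L^1}\qquad(E\subset\mathbb{R}^n\text{ Borel},\ s>0),\]
together with its global form $\Vert T_s a\Vert_{L^2(\mathbb{R}^n)}\lesssim s^{-1-n/2}\Vert a\Vert_{L^1}$. For $W_1(\cdot,s)=2sL^s_{||}e^{-s^2L^s_{||}}a$ this is immediate from the Gaussian bound on $\partial_t K_t$ (Proposition \ref{Prop:KernelBounds}, $l=1$), which bounds the kernel of $sL^s_{||}e^{-s^2L^s_{||}}$ by $Cs^{-n-1}e^{-\beta|x-y|^2/s^2}$. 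For $\nabla_{||}\mathcal{P}_s a=(\nabla_{||}e^{-\frac{s^2}{2}L^s_{||}})\,e^{-\frac{s^2}{2}L^s_{||}}a$ one combines the Gaussian $L^1\!\to\!L^2$ bound for $e^{-\frac{s^2}{2}L^s_{||}}$ (Proposition \ref{Prop:KernelBounds}, $l=0$) with the $L^2\to L^2$ bound and the $L^2$ off-diagonal (Gaffney) estimate for $s\,\nabla_{||}e^{-\frac{s^2}{2}L^s_{||}}$ (Propositions \ref{prop:L2boundednessOfSemigroupOperators} and \ref{prop:Off-diagonalEstimates}), via a dyadic-annular decomposition of $e^{-\frac{s^2}{2}L^s_{||}}a$ around $\Delta$ and the inequality $\Dist(E,F_j)^2+\Dist(F_j,\Delta)^2\gtrsim\Dist(E,\Delta)^2$; all constants depend only on ellipticity, uniformly in $s$, since $L^s_{||}$ is $t$-independent for fixed $s$.

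Granting this, fix $x\in A_k$ and put $D:=\Dist(x,\Delta)\sim 2^kr$. In the cone integral for $\mathcal{A}(T_s a)(x)^2$ we split at $s=D/(2\alpha)$: for $s<D/(2\alpha)$ the ball $B(x,\alpha s)$ lies at distance $\ge D/2$ from $\Delta$, so the off-diagonal estimate bounds the inner integral by $Cs^{-2-n}e^{-cD^2/s^2}\Vert a\Vert_{L^1}^2$; for $s\ge D/(2\alpha)$ the global bound gives $\le Cs^{-2-n}\Vert a\Vert_{L^1}^2$. Carrying out the resulting $s$-integral (exponentially convergent in the first regime, a plain power in the second, via the substitution $u=D/s$) gives $\mathcal{A}(T_s a)(x)^2\lesssim\Vert a\Vert_{L^1}^2 D^{-2n-2}\lesssim r^2 D^{-2n-2}$, hence $\mathcal{A}(T_s a)(x)\lesssim r\,D^{-n-1}$. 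Therefore $\int_{A_k}\mathcal{A}(T_s a)\lesssim r\,|A_k|\,(2^kr)^{-n-1}\lesssim r\cdot 2^{kn}r^n\cdot 2^{-k(n+1)}r^{-n-1}\lesssim 2^{-k}$, which is summable over $k\ge2$. Combined with the local estimate this yields $\Vert\mathcal{A}(T_t a)\Vert_{L^1}\le C$, and summing over atoms proves (i), (ii) and (iii).

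The main obstacle is the $L^1\!\to\!L^2$ off-diagonal estimate for $\nabla_{||}\mathcal{P}_t$: since $\AP$ is merely measurable the heat kernel has no pointwise gradient bound, so one cannot proceed as for $W_1$ and must factor $\nabla_{||}\mathcal{P}_s$ through $e^{-\frac{s^2}{2}L^s_{||}}$ and propagate the spatial decay through the composition, balancing the Gaussian tail of $e^{-\frac{s^2}{2}L^s_{||}}a$ against the Gaffney decay of the gradient factor over the dyadic annuli. The remaining ingredients — the reduction to atoms, the Caccioppoli reduction of (iii) to (ii), the local Cauchy--Schwarz step, and the elementary $s$-integrations — are routine.
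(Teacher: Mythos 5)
Your proposal is correct, and in most respects it is the paper's own proof: the reduction to a uniform bound over Hardy--Sobolev $2$-atoms, the split of $\Vert\mathcal{A}(T_ta)\Vert_{L^1}$ into a local part handled by Cauchy--Schwarz together with the $L^2$ area-function bounds of Propositions \ref{prop:L^2SquareFctBoundNablaP} and \ref{prop:L^2SquareFctBoundW_1}, the far-field estimate for $W_1$ obtained from the Gaussian kernel bound of Proposition \ref{Prop:KernelBounds} and the split of the cone at height comparable to $\Dist(x,\Delta)$ (yielding $\mathcal{A}(W_1a)(x)\lesssim r\,\Dist(x,\Delta)^{-n-1}$ and the summable annulus contribution $2^{-k}$), and the reduction of (iii) to (ii) via the Caccioppoli inequality of Proposition \ref{prop:CacciopolliTypeInequality} plus aperture comparability all match the paper. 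The genuine difference is the far-away part of (i): the paper never establishes spatial decay for $\nabla_{||}\mathcal{P}_t$; instead it integrates by parts against a cutoff on each far cone and uses ellipticity to obtain the pointwise reduction $\mathcal{A}_\alpha(\nabla_{||}\mathcal{P}_tf)^2\lesssim\mathcal{A}_{2\alpha}(W_1f)^2+\mathcal{A}_{2\alpha}(\mathcal{P}_tf/t)^2$ (hiding a small gradient term and invoking comparability of area functions with different apertures), after which both right-hand operators have pointwise kernel bounds and are treated exactly as in (ii). You instead prove an $L^1\to L^2$ Gaussian off-diagonal bound for $\nabla_{||}\mathcal{P}_s$ itself, by factoring $e^{-s^2L^s_{||}}=e^{-\frac{s^2}{2}L^s_{||}}e^{-\frac{s^2}{2}L^s_{||}}$ (legitimate, since both factors involve the same frozen operator $L^s_{||}$) and composing the kernel bound for the first factor with the Gaffney estimates for $s\nabla_{||}e^{-\frac{s^2}{2}L^s_{||}}$ over dyadic annuli; this is a standard composition argument and it closes, with the one point to treat honestly being the annulus summation, where $\Dist(E,F_j)^2+\Dist(F_j,\Delta)^2\gtrsim\Dist(E,\Delta)^2$ (valid in your application since $\Dist(E,\Delta)\approx 2^kr\gg l(\Delta)$) gives Gaussian decay up to at worst a factor $1+\log^+(s/r)$, which is harmless because it only turns the final sum into $\sum_k k\,2^{-k}$. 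The trade-off: the paper's route avoids gradient off-diagonal estimates entirely (consistent with its remark that such bounds are circumvented elsewhere), at the cost of the aperture-doubling/hiding step; your route requires the gradient Gaffney bounds (available via Proposition \ref{prop:Off-diagonalEstimates}, with constants uniform in $s$ because each $L^s_{||}$ is a $t$-independent operator with uniform ellipticity) but then handles $\nabla_{||}\mathcal{P}_t$, $W_1$ and $t\nabla_{||}W_1$ by one and the same far-field computation.
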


These lemmas are proved in the next two sections. First, we will deal with the bounds involving \(W_2\), i.e. \reflemma{lemma:CarlesonFunctionBoundW_2} and \reflemma{lemma:HardyNormBoundsW_2}, before we are able to turn to the bounds involving \(\nabla\mathcal{P}_t\) and \(W_1\), namely \reflemma{lemma:CarlesonFunctionBoundW_1} and \reflemma{lemma:HardyNormBoundsW_1}. We grouped the bounds for \(\nabla\mathcal{P}_t\) and \(W_1\) together because their proofs are very similar to each other.

\section{Area function bounds on \(W_2\)}\label{section:W2}

\subsection{Carleson function bounds and proof of \reflemma{lemma:CarlesonFunctionBoundW_2}}

We begin with obtaining pointwise Carleson function bounds for \(W_2f\).
\begin{lemma}\label{lemma:PointwiseCarlesonFunctionBoundW_2}
    Let \(f\in W^{1,1}_{loc}(\mathbb{R}^n)\). Then
    \begin{enumerate}[(i)]
        \item \(C(W_2f)^2(z):=\sup_{\Delta(z)}\frac{1}{\sigma(\Delta)}\int_{T(\Delta)}\frac{|W_2f(x,t)|^2}{t}dxdt\lesssim M[M[|\nabla_{||}f|^2]](z)\), and\label{item:PointwiseLocalSquareBoundW2}
        \item \(C(t\nabla W_2f)^2(z):=\sup_{\Delta(z)}\frac{1}{\sigma(\Delta)}\int_{T(\Delta)}\frac{|t\nabla_{||} W_2f(x,t)|^2}{t}dxdt\lesssim M[M[|\nabla_{||}f|^2]](z)\),\label{item:PointwiseLocalSquareBoundNablaW2}
    \end{enumerate}
    where the suprema are taken over all boundary balls centered at \(z\).
\end{lemma}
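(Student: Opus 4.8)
The plan is to fix a boundary ball $\Delta=\Delta(z,r)$ and estimate $\sigma(\Delta)^{-1}\int_{T(\Delta)}|W_2f(x,t)|^2\,t^{-1}\,dx\,dt$, showing it is $\lesssim M[M[|\nabla_{||}f|^2]](z)$ uniformly in $\Delta$; taking the supremum over $\Delta$ centered at $z$ then gives \eqref{item:PointwiseLocalSquareBoundW2}. It is convenient to pass to Whitney averages, i.e.\ to bound $\fint_{B(x,t,t/2)}|W_2f(y,s)|^2\,dy\,ds$ for $(x,t)\in T(\Delta)$ and integrate the resulting density in $t^{-1}\,dx\,dt$. Writing $W_2f(y,s)=\int_0^s 2\tau\,e^{(s^2-\tau^2)L_{||}^s}\mathrm{div}_{||}\big(\partial_sA_{||}(\cdot,s)\nabla_{||}e^{-\tau^2L_{||}^s}f\big)\,d\tau$, applying Minkowski's inequality in $\tau$, and splitting $\mathbb{R}^n$ into the dyadic annuli $A_0=\Delta(x,t)$, $A_k=\Delta(x,2^{k+1}t)\setminus\Delta(x,2^kt)$ around the Whitney ball, the duality identity $e^{sL}\mathrm{div}=-(\nabla e^{sL^*})^*$ together with \refprop{prop:Off-diagonalEstimates} gives
\[\big\|e^{(s^2-\tau^2)L_{||}^s}\mathrm{div}_{||}(G)\big\|_{L^2(\Delta(x,t/2))}\lesssim (s^2-\tau^2)^{-1/2}\,e^{-\alpha (2^kt)^2/(s^2-\tau^2)}\,\|G\|_{L^2(A_k)}\]
for $k\ge1$ (and without the exponential for $k=0$). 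Since the frozen operator $L_{||}^s$ is $t$-independent, \eqref{eq:NablaBounded} of \refprop{prop:PROP11} applies at scale $\tau\le s$; covering $A_k\subset\Delta(x,2^{k+1}t)$ by balls of radius $\tau$ and averaging yields $\|\nabla_{||}e^{-\tau^2L_{||}^s}f\|_{L^2(A_k)}^2\lesssim (2^kt)^n\fint_{\Delta(x,2^{k+1}t)}M[|\nabla_{||}f|^2]\lesssim (2^kt)^n\,M[M[|\nabla_{||}f|^2]](x)$, the double maximal function arising because the inner one localizes the semigroup gradient at the small scale $\tau$ while the outer one averages over the annulus.

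Crucially one must carry $\partial_sA_{||}$ along as a genuine weight, \emph{not} estimating it by $C/s$ (doing so would produce a density constant in $t$, hence a divergent $\int_0 dt/t$). So we bound $\|\partial_sA_{||}(\cdot,s)\|_{L^\infty(A_k)}\le\theta_k(x,t):=\sup_{|y-x|<2^{k+1}t,\ |s'-t|<t}\ \sup_{B(y,s',s'/2)}|\partial_tA|$, noting $\theta_k\le C/t$ since $s'\approx t$ on this region. Using the elementary bound $\int_0^s 2\tau\,(s^2-\tau^2)^{-1/2}e^{-a/(s^2-\tau^2)}\,d\tau=\int_0^{s^2}u^{-1/2}e^{-a/u}\,du\lesssim s\,e^{-a/(2s^2)}$ with $a=\alpha(2^kt)^2$ and $s\approx t$, then summing over $k$ and applying Cauchy--Schwarz in $k$ against the convergent series $\sum_k e^{-c4^k}2^{kn/2}$, we obtain
\[\fint_{B(x,t,t/2)}|W_2f|^2\,dy\,ds\ \lesssim\ t^2\,M[M[|\nabla_{||}f|^2]](x)\,\sum_{k\ge0}e^{-c4^k}\,2^{kn/2}\,\theta_k(x,t)^2 .\]

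Dividing by $t$ and integrating over $T(\Delta)$, the sum pulls out and it remains to bound $\sum_k e^{-c4^k}2^{kn/2}\int_{T(\Delta)}t\,\theta_k(x,t)^2\,M[M[|\nabla_{||}f|^2]](x)\,dx\,dt$. Here $t\,\theta_k^2\le C\,\theta_k$ (again by $|\partial_tA|\le C/t$), and $\theta_k\,dx\,dt$ is a Carleson measure of norm $\lesssim2^{Ck}$: up to a $2^{Ck}$-enlargement of the relevant region, $\int_{T(\Delta(z,R))}\theta_k\,dx\,dt\lesssim\int_{T(\Delta(z,C2^kR))}\sup_{B}|\partial_tA|\,dx\,dt\lesssim (2^kR)^n$ by the $L^1$-Carleson hypothesis \eqref{cond:ExplanationForCarlesonCondition}. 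Since $2^{Ck}$ is dominated by the Gaussian $e^{-c4^k}$, a local version of the Carleson-measure/nontangential duality \eqref{eq:DualityCarelsonNormNontangential} — valid because $M[M[|\nabla_{||}f|^2]]$ is a maximal function, so its averages over sub-balls of $\Delta$ are controlled by its value at $z$ — gives $\int_{T(\Delta)}\theta_k\,M[M[|\nabla_{||}f|^2]]\,dx\,dt\lesssim 2^{Ck}\sigma(\Delta)\,M[M[|\nabla_{||}f|^2]](z)$, and summing in $k$ proves \eqref{item:PointwiseLocalSquareBoundW2}.

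For \eqref{item:PointwiseLocalSquareBoundNablaW2} I would not differentiate $W_2f$ directly, since $\nabla_{||}e^{(s^2-\tau^2)L_{||}^s}\mathrm{div}_{||}$ carries the non-integrable singularity $(s^2-\tau^2)^{-1}$ near $\tau=s$ on the central annulus; instead apply the Caccioppoli-type inequality \eqref{eq:secondsummand} of \refprop{prop:CacciopolliTypeInequality} with radius $\sim t$, which bounds $\fint_{B(x,t,t/8)}|t\nabla_{||}W_2f|^2$ by $\fint_{B(x,t,t/4)}|W_2f|^2+t^2\|\partial_sA_{||}\|_{L^\infty(B(x,t,t/4))}^2\,M[\nabla_{||}f]^2(x)$. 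Integrating the first term against $t^{-1}\,dx\,dt$ over $T(\Delta)$ is exactly part \eqref{item:PointwiseLocalSquareBoundW2} (after a harmless enlargement of $\Delta$), and for the second term one uses $M[\nabla_{||}f]^2\le M[|\nabla_{||}f|^2]$, that $t\,\|\partial_sA_{||}\|_{L^\infty(B(x,t,t/4))}^2\,dx\,dt$ is Carleson (once more $t\,\|\partial_tA\|_\infty^2\le C\,\sup_B|\partial_tA|$ by $|\partial_tA|\le C/t$, and the latter is Carleson by \eqref{cond:ExplanationForCarlesonCondition}), and $\fint_{\Delta(z,r)}M[|\nabla_{||}f|^2]\le M[M[|\nabla_{||}f|^2]](z)$, giving the required bound $\lesssim\sigma(\Delta)\,M[M[|\nabla_{||}f|^2]](z)$. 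The main obstacle is exactly this bookkeeping: the crude use of $|\partial_tA|\le C/t$ fails (it yields a $t$-constant density), so $\partial_tA$ must be preserved as a weight throughout the off-diagonal/semigroup expansion and only the $L^1$-Carleson condition invoked at the end, and one must verify that the $k$-dependent Carleson norms of the dilated weights $\theta_k\,dx\,dt$ are beaten by the Gaussian decay.
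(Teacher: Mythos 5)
Your argument is correct in substance and reaches the same estimate by a genuinely different bookkeeping. The paper fixes the Carleson ball $\Delta$, applies Minkowski and off-diagonal estimates to the $L^2(\Delta)$-norm, and splits the source into annuli centered at $\Delta$: on the far annuli it \emph{does} use the crude bound $|\partial_t\AP|\lesssim 1/s$, which is harmless there because the Gaussian decay in space trades the $1/s$ for a $1/l(\Delta)$, making $\int_0^{l(\Delta)}\tfrac{ds}{s}(\cdot)^2$ convergent; only on the near part ($2\Delta$) does it keep $\partial_t\AP$ as a weight, via the scale-$\tau$ dyadic decomposition $\mathcal{D}_k(3\Delta)$ together with \eqref{eq:NablaBounded}, and then concludes with $|\partial_tA|\le C/t$, the $L^1$-Carleson hypothesis \eqref{cond:ExplanationForCarlesonCondition} and the local duality \eqref{eq:DualityCarelsonNormNontangential} evaluated at the center $z$. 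You instead work with Whitney averages and annuli centered at the Whitney ball, carry the weight $\theta_k$ through \emph{all} annuli, and pay for the enlargement with Carleson norms of size $2^{Ck}$ that are beaten by the Gaussian factors; the ingredients (off-diagonal bounds for $e^{-uL}\mathrm{div}$ via duality with $\nabla e^{-uL^*}$, \eqref{eq:NablaBounded} for the frozen $t$-independent operator at scale $\tau$, $t\theta^2\lesssim\theta$ from $|\partial_tA|\le C/t$, Carleson duality against the maximal function at $z$) are the same as the paper's, so your route is a uniform-weight variant rather than a near/far dichotomy; it is slightly more unified, at the cost of having to verify the $2^{Ck}$ growth of the dilated Carleson norms, which the paper avoids on the far part. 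Part \eqref{item:PointwiseLocalSquareBoundNablaW2} is handled exactly as in the paper, via \refprop{prop:CacciopolliTypeInequality}.

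Two small points to tighten. In the definition of $\theta_k$ the range $|s'-t|<t$ allows $s'\to0$, so the claim $\theta_k\le C/t$ as justified by ``$s'\approx t$'' is not literally true; since the Whitney average only involves $s\in(t/2,3t/2)$ you should take $|s'-t|<t/2$ (or similar), after which the bound is immediate. The claim that each $\theta_k\,dx\,dt$ is Carleson with norm $\lesssim 2^{Ck}$ deserves a sentence (cover the enlarged sup-region by $O(2^{kn})$ Whitney balls, translate/dilate, and apply \eqref{cond:ExplanationForCarlesonCondition} on $T(\Delta(z,C2^kR))$); it is standard but is the one step the paper's decomposition does not need. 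Finally, your last localization — bounding $\int_{T(\Delta)}\theta_k\,M[M[|\nabla_{||}f|^2]]\,dx\,dt$ by $\sigma(\Delta)$ times the value of the double maximal function at $z$ — is the same informal maneuver the paper itself performs with $\int_\Delta\tilde N(M[\nabla f]^2)\,dx\lesssim|\Delta|\,M[M[|\nabla f|^2]](z)$, so it is not a gap relative to the paper, though in both cases the cleanest justification yields an extra maximal-function iteration, which is harmless for every subsequent use of the lemma.
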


\begin{proof}
    For \eqref{item:PointwiseLocalSquareBoundW2} we let \(\Delta\) be a boundary ball with center \(z\in \partial\Omega\). Then Minkowski's inequality yields

    \begin{align*}
        &\int_{T(\Delta)}\frac{|W_2f(x,s)|^2}{s}dxds
        \\
        &\qquad = \int_0^{\infty}\frac{1}{s}\Big\Vert\int_0^{s} \tau e^{-(s^2-\tau^2)L_{||}^s}\mathrm{div}(\partial_s \AP(x,s)\nabla e^{-\tau^2L_{||}^s}f)d\tau\Big\Vert_{L^2(\Delta)}^2 ds
        \\
        &\qquad\leq \int_0^{l(\Delta)}\frac{1}{s}\Big(\int_0^{s} \Vert\tau e^{-(s^2-\tau^2)L_{||}^s}\mathrm{div}(\partial_s \AP(x,s)\nabla e^{-\tau^2L_{||}^s}f)\Vert_{L^2(\Delta)} d\tau\Big)^2 dx ds.
    \end{align*}

    For the inner \(L^2\) norm, let us use off-diagonal estimates (\refprop{prop:Off-diagonalEstimates}) and cut-off functions \(\eta_k:=\chi_{2^k\Delta\setminus2^{k-1}\Delta}\) such that \(\sum_k\chi_{2^k\Delta\setminus 2^{k-1}\Delta} + \chi_{2\Delta}=1\) to obtain
    \begin{align}
        &\Vert\tau e^{-(s^2-\tau^2)L_{||}^s}\mathrm{div}(\partial_s \AP(x,s)\nabla e^{-\tau^2L_{||}^s}f)\Vert_{L^2(\Delta)}\nonumber
        \\
        &\leq \sum_{k\geq 2}\Vert\tau e^{-(s^2-\tau^2)L_{||}^s}\mathrm{div}(\eta_k\partial_s \AP(x,s)\nabla e^{-\tau^2L_{||}^s}f)\Vert_{L^2(\Delta)}\nonumber
        \\
        &\qquad + \Vert\tau e^{-(s^2-\tau^2)L_{||}^s}\mathrm{div}(\chi_{2\Delta}\partial_s \AP(x,s)\nabla e^{-\tau^2L_{||}^s}f)\Vert_{L^2(\Delta)}\nonumber
        \\
        &\leq \sum_{k\geq 2}\frac{\tau}{\sqrt{s^2-\tau^2}}e^{-c\frac{2^{2k}l(\Delta)^2}{s^2-\tau^2}}\Vert\partial_s \AP(x,s)\nabla e^{-\tau^2L_{||}^s}f\Vert_{L^2(2^k\Delta\setminus 2^{k-1}\Delta)}\nonumber
        \\
        &\qquad + \frac{\tau}{\sqrt{s^2-\tau^2}}\Vert\partial_s \AP(x,s)\nabla e^{-\tau^2L_{||}^s}f\Vert_{L^2(2\Delta)}=I+II.\nonumber
    \end{align}
    We continue for the first term with the same ideas, in particular the off-diagonal estimates, and the pointwise bound \(|\partial_sA|\leq \frac{1}{s}\), and get
    \begin{align}
        I&\leq \sum_{k\geq 2}\sum_{m\geq 2}\frac{\tau}{\sqrt{s^2-\tau^2}}e^{-c\frac{2^{2k}l(\Delta)^2}{s^2-\tau^2}}\Vert\partial_s \AP(x,s)\nabla e^{-\tau^2L_{||}^s}(\eta_{k+m}(f-(f)_{2^k\Delta}))\Vert_{L^2(2^k\Delta\setminus 2^{k-1}\Delta)}\nonumber
        \\
        & \qquad + \sum_{k\geq 2}\frac{\tau}{\sqrt{s^2-\tau^2}}e^{-c\frac{2^{2k}l(\Delta)^2}{s^2-\tau^2}}\Vert\partial_s \AP(x,s)\nabla e^{-\tau^2L_{||}^s}(\chi_{2^{k+1}\Delta}(f-(f)_{2^k\Delta}))\Vert_{L^2(2^k\Delta\setminus 2^{k-1}\Delta)}\nonumber
        \\
        &\leq \sum_{k\geq 2}\sum_{m\geq 2}\frac{1}{\sqrt{s^2-\tau^2}}e^{-c\frac{2^{2k}l(\Delta)^2}{s^2-\tau^2} - c\frac{2^{2(m+k-1)}l(\Delta)^2}{\tau^2}}\Vert\partial_s \AP(\cdot,s)\Vert_\infty\Vert(f-(f)_{2^k\Delta})\Vert_{L^2(2^{k+m}\Delta)}\nonumber
         \\
        & \qquad + \sum_{k\geq 2}\frac{1}{\sqrt{s^2-\tau^2}}e^{-c\frac{2^{2k}l(\Delta)^2}{s^2-\tau^2}}\Vert\partial_s \AP(\cdot,s)\Vert_{L^\infty}\Vert f-(f)_{2^k\Delta}\Vert_{L^2(2^{k+1}\Delta)}.\nonumber
        \\
        &\leq |\Delta|^{1/2}\Big(\sum_{k\geq 2}\sum_{m\geq 2}\frac{2^{(k+m)(\frac{n}{2}+1)}l(\Delta)}{s\sqrt{s^2-\tau^2}}e^{-c\frac{2^{2k}l(\Delta)^2}{s^2-\tau^2} - c\frac{2^{2(m+k-1)}l(\Delta)^2}{\tau^2}}M[|\nabla f|^2]^{1/2}\label{eq:sumtointegral1}
         \\
        & \qquad + \sum_{k\geq 2}\frac{\tau 2^{(k+1)\frac{n}{2}}l(\Delta)}{s\sqrt{s^2-\tau^2}}e^{-c\frac{2^{2k}l(\Delta)^2}{s^2-\tau^2}}M[|\nabla f|^2]^{1/2}\Big).\label{eq:sumtointegral2}
    \end{align}
    Here we used Poincar\'{e} in the last line, noting that for the first term we can observe that
    \begin{align}
        &\Vert f-(f)_{2^k\Delta}\Vert_{L^2(2^{k+m}\Delta)}\label{observationf-MVonLargerDomain}
        \\
        &\qquad\leq \Vert f-(f)_{2^{k+m}\Delta}\Vert_{L^2(2^{k+m}\Delta)} + \sum_{l=1}^m \Vert(f)_{2^{k+l}\Delta}-(f)_{2^{k+l-1}\Delta}\Vert_{L^2(2^{k+m}\Delta)}\nonumber
        \\
        &\qquad\lesssim 2^{(k+m)}l(\Delta)\Vert \nabla_{||}f\Vert_{L^2(2^{k+m}\Delta)} + \sum_{l=1}^m 2^{k+l}l(\Delta)\Vert\nabla_{||}f\Vert_{L^2(2^{k+m}\Delta)}\nonumber
        \\
        &\qquad\lesssim 2^{(k+m)}l(\Delta)\Vert \nabla_{||}f\Vert_{L^2}\lesssim  2^{(k+m)(\frac{n}{2}+1)}l(\Delta)|\Delta|^{1/2}M[|\nabla_{||}f|^2]^{1/2}(z),\nonumber
    \end{align}
    We consider the sums in \eqref{eq:sumtointegral1} and \eqref{eq:sumtointegral2} as Riemann sums of integral. For instance for \eqref{eq:sumtointegral2}, the sum can be bounded by the integrals
    \[\int_0^\infty\frac{\tau}{s\sqrt{s^2-\tau^2}}x^{\frac{n}{2}-1}e^{-cx^2\frac{l(\Delta)^2}{s^2-\tau^2}}dx,\]
    which after the change of variables \(y=\frac{l(\Delta)^2}{s^2-\tau^2}x\) gives a convergent integral
    \[\frac{\tau\sqrt{s^2-\tau^2}}{sl(\Delta)\sqrt{s^2-\tau^2}}\int_0^\infty y^{\frac{n}{2}-1}e^{-y^2}dy\lesssim \frac{C}{l(\Delta)}.\]
    Similar arguments for the other sum in \eqref{eq:sumtointegral1} lead to the conclusion
    \[I\lesssim M[|\nabla_{||} f|^2]^{1/2}(z)|\Delta|^{1/2}\Big(\frac{1}{l(\Delta)}+ \frac{\tau}{l(\Delta)\sqrt{s^2-\tau^2}}\Big).\]
    
    Hence we obtain
    \begin{align*}
         &\int_0^{l(\Delta)}\frac{1}{s}\Big(\int_0^{s} I d\tau\Big)^2 dx ds
        \\
        &\lesssim\int_0^{l(\Delta)}\frac{1}{s}\Big(\int_0^{s} M[|\nabla_{||} f|^2]^{1/2}(z)|\Delta|^{1/2}\Big(\frac{1}{l(\Delta)}+ \frac{\tau}{l(\Delta)\sqrt{s^2-\tau^2}}\Big) d\tau\Big)^2 ds
        \\
        &\lesssim |\Delta| M[|\nabla_{||} f|^2](z).
    \end{align*}

    For \(II\) we have for a scale \(k\) such that \(2^{-k}\leq \tau \leq 2^{-k+1}\) with \refprop{prop:PROP11}
    \begin{align*}
    II&= \frac{\tau}{\sqrt{s^2-\tau^2}}\Vert\partial_s \AP(x,s)\nabla e^{-\tau^2L_{||}^s}f\Vert_{L^2(2\Delta)}
    \\
    &= \frac{\tau}{\sqrt{s^2-\tau^2}}\Big(\sum_{Q\in \mathcal{D}_k(3\Delta)}\Vert\partial_s \AP(x,s)\nabla e^{-\tau^2L_{||}^s}f\Vert_{L^2(Q)}^2\Big)^{1/2}
    \\
    &= \frac{\tau}{\sqrt{s^2-\tau^2}}\Big(\sum_{Q\in \mathcal{D}_k(3\Delta)}\sup_{(x,s)\in Q}|\partial_s \AP(x,s)|^2\inf_{(x,s)\in Q} |M[\nabla f](x,s)|^2\Big)^{1/2}
    \\
    &= \frac{\tau}{\sqrt{s^2-\tau^2}}\Big(\int_{3\Delta}\sup_{(y,t)\in B(x,s,s/2)}|\partial_t \AP(y,t)|^2 |M[\nabla f]|^2 dx\Big)^{1/2}.
    \end{align*}

    Hence we obtain
    \begin{align*}
        &\int_0^{l(\Delta)}\frac{1}{s}\Big(\int_0^{s} II d\tau\Big)^2 dx ds
        \\
        &\leq\int_0^{l(\Delta)}\frac{1}{s}\Big(\int_0^{s} \frac{\tau}{\sqrt{s^2-\tau^2}}\Vert\partial_s \AP(x,s)\nabla e^{-\tau^2L_{||}^s}f\Vert_{L^2(2\Delta)} d\tau\Big)^2 dx ds
        \\
        &\leq\int_0^{l(\Delta)}\frac{1}{s}\Big(\int_0^{s} \frac{\tau}{\sqrt{s^2-\tau^2}}\Big(\int_{3\Delta}\sup_{(y,t)\in B(x,s,s/2)}|\partial_t \AP|^2 M[\nabla f]^2 dx\Big)^{1/2} d\tau\Big)^2 dx ds
        \\
        &\leq\int_0^{l(\Delta)}\int_{3\Delta}\sup_{(y,t)\in B(x,s,s/2)}|\partial_t \AP|^2s M[\nabla f]^2 dx ds.
    \end{align*}
    Making use of \eqref{eq:DualityCarelsonNormNontangential} we continue with
    \begin{align*}
        &\lesssim \Big\Vert\sup_{(y,t)\in B(x,s,s/2)}|\partial_t \AP(y,t)|^2s \Big\Vert_{\mathcal{C}} \int_\Delta \tilde{N}(M[|\nabla f|]^2) dx
        \\
        &\lesssim \Big\Vert\sup_{(y,t)\in B(x,s,s/2)}|\partial_t \AP(y,t)|^2s \Big\Vert_{\mathcal{C}} |\Delta| M[M[|\nabla f|]^2](z).
    \end{align*}
    We would like to note here that \eqref{eq:DualityCarelsonNormNontangential} does not contain the mean-valued nontangential maximal function on the right hand side, but since the density of the Carleson measure contains a supremum over balls of radius half the distance to the boundary, the standard stopping time argument that yields \eqref{eq:DualityCarelsonNormNontangential} allows to take a mean value in the nontangential maximal function. We omit the details here.
    
    In total we get for \eqref{item:PointwiseLocalSquareBoundW2}
    \begin{align*}
        C(W_2f)(z)&=\sup_{\Delta=\Delta(z)}\frac{1}{|\Delta|}\int_{T(\Delta)}\frac{|W_2f(x,s)|^2}{s}dxds
        \\
        &\lesssim \frac{1}{|\Delta|}\Big(\int_0^{l(\Delta)}\frac{1}{s}\Big(\int_0^{s} I d\tau\Big)^2 dx ds + \int_0^{l(\Delta)}\frac{1}{s}\Big(\int_0^{s} II d\tau\Big)^2 dx ds\Big)
        \\
        &\lesssim M[M[|\nabla f|]^2](z).
    \end{align*}

     For \eqref{item:PointwiseLocalSquareBoundNablaW2} we have by Cacciopolli type inequality \refprop{prop:CacciopolliTypeInequality}
    \begin{align*}
        &\int_{T(\Delta)}\frac{|t\nabla W_2f(x,t)|^2}{t}dxdt
        \lesssim \int_{T(2\Delta)}\frac{|W_2f(x,t)|^2}{t}dxdt
        \\
        &\qquad + \int_{2\Delta}\int_0^{2l(\Delta)}\sup_{(y,s)\in B(x,t,t/2)}|\partial_s \AP(y,s)|^2t M[|\nabla_{||}f|^2] dxdt
        \\
        &\lesssim \Big\Vert\sup_{(y,s)\in B(x,t,t/2)}|\partial_s \AP(y,s)|^2 t \Big\Vert_{\mathcal{C}} \int_\Delta \tilde{N}(M[|\nabla f|]^2) dx
        \\
        &\lesssim |\Delta| M[M[|\nabla_{||} f|^2]](z).
    \end{align*}

\end{proof}

As a corollary we obtain the Carleson measure bound that we need in the real interpolation argument for \refcor{cor:AreaFctBoundsInLpW2}.

\begin{proof}[Proof of \reflemma{lemma:CarlesonFunctionBoundW_2}]
    The only observation needed is that if \(f\in W^{1,\infty}(\mathbb{R}^n)\), then \(M[M[|\nabla_{||} f|^2]]\leq \Vert \nabla_{||} f\Vert_{L^\infty}^2\). Then the statement follows from \reflemma{lemma:PointwiseCarlesonFunctionBoundW_2}.
\end{proof}

Another corollary is the area function bound with \(\beta>2\) which will be needed for the proof of \reflemma{lemma:HardyNormBoundsW_2}.
\begin{cor}\label{lemma:HardyNormBoundsW_2beta}
    Let \(f\in W^{1,\beta}(\partial\Omega)\) for \(2<\beta<\infty\). Then we have
    \begin{enumerate}[(i)]
        \item \(\Vert\mathcal{A}(W_2f)\Vert_{L^\beta(\mathbb{R}^n)}\lesssim \Vert \nabla_{||}f\Vert_{L^\beta(\mathbb{R}^n)},\) and
        \item \(\Vert\mathcal{A}(t\nabla W_2f)\Vert_{L^\beta(\mathbb{R}^n)}\lesssim \Vert \nabla_{||}f\Vert_{L^\beta(\mathbb{R}^n)}\).
    \end{enumerate}
\end{cor}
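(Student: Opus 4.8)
The plan is to derive both bounds from the pointwise Carleson estimates of \reflemma{lemma:PointwiseCarlesonFunctionBoundW_2} via the soft principle that, for \(2<\beta<\infty\), the \(L^\beta\)-norm of an area function is dominated by the \(L^\beta\)-norm of the associated Carleson function: for every \(F\in L^2_{loc}(\Omega)\),
\[\Vert \mathcal{A}(F)\Vert_{L^\beta(\mathbb{R}^n)}\lesssim \Vert C(F)\Vert_{L^\beta(\mathbb{R}^n)}.\]
Granting this, the corollary follows at once. By \reflemma{lemma:PointwiseCarlesonFunctionBoundW_2} we have \(C(W_2f)^2(z)\lesssim M[M[|\nabla_{||}f|^2]](z)\) pointwise, and likewise \(C(t\nabla_{||}W_2f)^2(z)\lesssim M[M[|\nabla_{||}f|^2]](z)\), so
\[\Vert \mathcal{A}(W_2f)\Vert_{L^\beta}\lesssim \Vert C(W_2f)\Vert_{L^\beta}=\Vert C(W_2f)^2\Vert_{L^{\beta/2}}^{1/2}\lesssim \big\Vert M[M[|\nabla_{||}f|^2]]\big\Vert_{L^{\beta/2}}^{1/2}\lesssim \big\Vert\, |\nabla_{||}f|^2\big\Vert_{L^{\beta/2}}^{1/2}=\Vert \nabla_{||}f\Vert_{L^\beta},\]
where the last two steps use boundedness of the Hardy--Littlewood maximal operator on \(L^{\beta/2}\); this is precisely where \(\beta>2\) enters. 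The estimate for \(t\nabla_{||}W_2f\) is identical.

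To prove the principle I argue by duality. Since \(\beta>2\) we have \(\Vert \mathcal{A}(F)\Vert_{L^\beta}^2=\Vert \mathcal{A}(F)^2\Vert_{L^{\beta/2}}\), and as \(\beta/2>1\) there is \(0\le g\in L^{(\beta/2)'}(\mathbb{R}^n)\) with \(\Vert g\Vert_{(\beta/2)'}=1\) (nearly) realizing this norm. Fubini gives
\[\int_{\mathbb{R}^n}\mathcal{A}(F)(x)^2 g(x)\,dx=\int_{\Omega}\frac{|F(y,t)|^2}{t^{n+1}}\Big(\int_{|x-y|<\alpha t}g(x)\,dx\Big)\,dy\,dt\lesssim \int_{\Omega}\frac{|F(y,t)|^2}{t}\,G(y,t)\,dy\,dt,\]
with \(G(y,t):=\fint_{\Delta(y,\alpha t)}g\). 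If \((y,t)\in\Gamma_\alpha(z)\) then \(z\in\Delta(y,\alpha t)\), whence \(G(y,t)\lesssim M[g](z)\), so \(\tilde N(G)\lesssim M[g]\). The right-hand side above is \(\int_\Omega G\,d\mu_F\) for the measure \(d\mu_F=|F|^2t^{-1}\,dy\,dt\), whose pointwise Carleson function equals \(C(F)^2\); applying the weighted Carleson embedding (the variable-Carleson-norm version of \eqref{eq:DualityCarelsonNormNontangential}, cf. Proposition 3 in \cite{coifman_new_1985} or Corollary 3.12 in \cite{milakis_harmonic_2013}) and then Hölder yields
\[\int_\Omega G\,d\mu_F\lesssim \int_{\mathbb{R}^n}\tilde N(G)(z)\,C(F)(z)^2\,dz\lesssim \int_{\mathbb{R}^n}M[g](z)\,C(F)(z)^2\,dz\le \Vert M[g]\Vert_{L^{(\beta/2)'}}\Vert C(F)^2\Vert_{L^{\beta/2}}\lesssim \Vert C(F)\Vert_{L^\beta}^2,\]
using boundedness of \(M\) on \(L^{(\beta/2)'}\) (again valid since \(\beta>2\)). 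Taking the supremum over \(g\) proves the principle. (The duality step is made rigorous by first truncating \(F\) to \(\{N^{-1}<t<N,\ |y|<N\}\), running the argument with the uniform constant, and letting \(N\to\infty\) by monotone convergence.)

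The only real work is organizing this duality/Fubini reduction cleanly: verifying that the cone-average \(G\) has nontangential maximal function controlled by \(M[g]\) (a matter of comparing apertures) and invoking the correct pointwise-Carleson-norm form of the Carleson embedding rather than the single-constant form in \eqref{eq:DualityCarelsonNormNontangential}. Both are standard. The genuine constraint is \(\beta>2\), which is forced in two places by the need for \(L^{\beta/2}\)- and \(L^{(\beta/2)'}\)-boundedness of the Hardy--Littlewood maximal operator; this is exactly why the endpoint \(\beta=2\) is excluded here and handled separately.
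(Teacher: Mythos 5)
Your argument is correct and follows essentially the same route as the paper: the paper deduces the corollary in one line from the bound \(\Vert\mathcal{A}(F)\Vert_{L^\beta}\lesssim\Vert\mathcal{C}(F)\Vert_{L^\beta}\) for \(2<\beta<\infty\) (cited as Theorem 6.1 in \cite{milakis_harmonic_2013}), combined with \reflemma{lemma:PointwiseCarlesonFunctionBoundW_2} and \(L^{\beta/2}\)-boundedness of the Hardy--Littlewood maximal function, exactly as you do. The only difference is that you reprove that cited area-function/Carleson-function comparison by the standard duality--Fubini--Carleson-embedding argument, which is a correct (if redundant) addition.
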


\begin{proof}
    By Theorem 6.1 in \cite{milakis_harmonic_2013} we know \(\Vert \mathcal{A}(g)\Vert_{L^\beta(\mathbb{R}^n)}\lesssim_\beta \Vert \mathcal{C}(g)\Vert_{L^\beta(\mathbb{R}^n)}\), if \(2<\beta<\infty\). Combining this with \reflemma{lemma:PointwiseCarlesonFunctionBoundW_2} and \(L^p\) boundedness of the Hardy-Littlewood maximal function, we obtain
    \[\Vert \mathcal{A}(W_2f)\Vert_{L^\beta}\lesssim_\beta \Vert \mathcal{C}(W_2f)\Vert_{L^\beta}\lesssim \big(\int_{\partial\Omega} M[M[|\nabla f|^2]]^{\beta/2}dx\big)^{1/\beta}\lesssim \Vert \nabla_{||} f\Vert_{L^\beta}.\]
    A completely analogous argument works for \(t\nabla W_2\).
\end{proof}

\subsection{Hardy-Sobolev bound (Proof of \reflemma{lemma:HardyNormBoundsW_2})}

\begin{proof}[Proof of \reflemma{lemma:HardyNormBoundsW_2}]
    First, we note that it is enough to show
    \[\Vert\mathcal{A}(W_2f)\Vert_{L^1(\mathbb{R}^n)}, \Vert\mathcal{A}(t\nabla W_2f)\Vert_{L^1(\mathbb{R}^n)}\leq C\]
    for all homogeneous Hardy-Sobolev \(1/2\)-atoms \(f\) associated with \(\Delta\), whence we assume that \(f\) is such an atom going forward.
    
    We begin with showing \eqref{item:HardySquareBound}. We split the integral into a local and a far away part 
    \begin{align*}
        \Vert\mathcal{A}(W_2f)\Vert_{L^1(\mathbb{R}^n)}=\Vert\mathcal{A}(W_2f)\Vert_{L^1(\mathbb{R}^n\setminus 5\Delta)} + \Vert\mathcal{A}(W_2f)\Vert_{L^1(5\Delta)}.
    \end{align*}
    For the local part we have by Hölder's inequality, \refcor{lemma:HardyNormBoundsW_2beta} and the properties of the Hardy-Sobolev space (cf. \refdef{def:HardySpace}) that
    \begin{align*}
        \Vert\mathcal{A}(W_2f)\Vert_{L^1(5\Delta)}\lesssim \Vert\mathcal{A}(W_2f)\Vert_{L^\beta(5\Delta)}|\Delta|^{1/\beta'}\lesssim \Vert\nabla_{||} f\Vert_{L^\beta(\mathbb{R}^n)}|\Delta|^{1/\beta'}\lesssim 1.
    \end{align*}

    First, we note that the kernel bounds in \refprop{Prop:KernelBounds} imply
    \begin{align}
        &\sup_{(x,t)\in\Delta_{t/2}(y)} |tL^te^{-t^2L^t}f(x)|^2dx\nonumber
        \\
        &\qquad\lesssim \begin{cases} \frac{1}{t^{2n+2}}e^{-c\frac{\mathrm{dist}(\Delta_{t/2}(y),\Delta)^2}{t^2}}\Vert f\Vert_{L^1(\Delta)}^2 &\textrm{if }\mathrm{dist}(\Delta_{t/2}(y),\Delta)>0, \\ \frac{1}{t^{2n+2}}\Vert f\Vert_{L^1(\Delta)}^2& \mathrm{else}.\end{cases}\label{eq:CasesForL^1BoundofTermInProof}
    \end{align}

    For the away part we are going to distinguish several cases. We are going to split the integral over \(\mathbb{R}^n\) into integrals over annuli of the form \(2^{j+1}\Delta\setminus 2^{j}\Delta \) for \(j\geq 2\) and bound all cones in each annulus uniformly. However, each cone with tip in \(2^{j+1}\Delta\setminus 2^{j}\Delta \) is itself split into a a close and a far away part \(\Gamma=\Gamma^{2^{j-1}l(\Delta)}\cup \Gamma\setminus\{t\leq 2^{j-1}l(\Delta)\}\).

    To start with, we fix \(y\in\partial\Omega\) with \(y\in 2^{j+1}\Delta\setminus 2^j\Delta\). Let us also fix \(t\) and we look at the term 
    \[\int_{\Delta_{t/2}(y)}\Big|\int_0^t2\tau e^{-(t^2-\tau^2)L^t}\mathrm{div}(\partial_t\AP\nabla e^{-\tau^2L^t}f)d\tau\Big|^2dx.\]

    First assume that \(0\leq t\leq 2^{j-1}l(\Delta)\). Then \(\Delta_{t/2}(y)\subset \Delta_{2^{j-2}l(\Delta)}(y):=\tilde{\Delta}\), and we have for a dualising function \(g\in L^2(\tilde{\Delta})\) 
    \begin{align}
        &\Vert\tau e^{-(t^2-\tau^2)L_{||}^t}\mathrm{div}(\partial_t \AP(x,t)\nabla e^{-\tau^2L_{||}^t}f)\Vert_{L^2(\tilde{\Delta})}\nonumber
        \\
        &=\tau\int_{\mathbb{R}}\partial_t \AP(x,t)\nabla e^{-\tau^2L_{||}^t}f\cdot \nabla e^{-(t^2-\tau^2)L_{||}^t} g dx\nonumber
        \\
        &\lesssim
            \tau\Vert\partial_t\AP\Vert_\infty\big(\Vert \nabla e^{-(t^2-\tau^2)L_{||}^t} g\Vert_{L^2(2\tilde{\Delta})} \Vert \nabla e^{-\tau^2L_{||}^t}f\Vert_{L^2(2\tilde{\Delta})}\nonumber
        \\
        &\qquad+ \Vert \nabla e^{-(t^2-\tau^2)L_{||}^t} g\Vert_{L^2(2\Delta)} \Vert \nabla e^{-\tau^2L_{||}^t}f\Vert_{L^2(2\Delta)}\nonumber
        \\
        &\qquad+ \Vert \nabla e^{-(t^2-\tau^2)L_{||}^t} g\Vert_{L^2(\mathbb{R}^n\setminus 2\tilde{\Delta})} \Vert \nabla e^{-\tau^2L_{||}^t}f\Vert_{L^2(\mathbb{R}^n\setminus 2\Delta)}\big).\label{eq:ProofUsefulEquation1}
    \end{align}
    Let us note that by using a smooth cut-off function \(\eta\) with \(\mathrm{supp}(\eta)\subset\frac{3}{2}\tilde{\Delta}\) and \(\eta\equiv 1\) on \(\tilde{\Delta}\) we have
    \[\int_{\tilde{\Delta}}|\nabla e^{-\tau^2L_{||}^t}f|^2dx\lesssim \int_{2\tilde{\Delta}} W_1f \cdot \mathcal{P}_tf\eta^2 dx + \int_{2\tilde{\Delta}} \nabla e^{-\tau^2L_{||}^t}f\cdot\nabla \eta \mathcal{P}_tf\eta dx\]
    which implies \(\int_{\tilde{\Delta}}|\nabla e^{-\tau^2L_{||}^t}f|^2dx\lesssim \int_{\tilde{\Delta}}|W_1f|^2dx + \frac{1}{t^2}\int_{\tilde{\Delta}}|\mathcal{P}_tf|^2dx\). This observation holds not only for the set \(\tilde{\Delta}\), but also for integrals over other sets and enlargements thereof.
    For each of those terms we can use a similar observation to \eqref{eq:CasesForL^1BoundofTermInProof} and combine this with \refprop{prop:L2boundednessOfSemigroupOperators} to bound \(\eqref{eq:ProofUsefulEquation1}\) by
    \begin{align*}
        &\Vert\partial_t\AP\Vert_\infty\Big[\Big(\int_{2\tilde{\Delta}}\big(\frac{\tau}{\sqrt{t^2-\tau^2}\tau^{n+1}}e^{-c\frac{2^{2(j-1)} l(\Delta)^2}{\tau^2}}\Vert f\Vert_{L^1(\Delta)}\big)^2dx\Big)^{1/2}\Vert g\Vert_{L^2}
        \\
        &\qquad +  \Big(\int_{2\Delta}\big(\frac{1}{\sqrt{t^2-\tau^2}^{n+1}}e^{-c\frac{2^{2(j-1)} l(\Delta)^2}{t^2-\tau^2}}\Vert g\Vert_{L^1(\tilde{\Delta})}\big)^2dx\Big)^{1/2}\Vert f\Vert_{L^2}
        \\
        &\qquad +  \tau\Big(\int_{\mathbb{R}^n\setminus 2\tilde{\Delta}}\big(\frac{1}{\sqrt{t^2-\tau^2}^{n+1}}e^{-c\frac{\mathrm{dist}(x,\tilde{\Delta})^2}{t^2-\tau^2}}\Vert g\Vert_{L^1(\tilde{\Delta})}\big)^2dx\Big)^{1/2} 
        \\
        &\qquad\qquad \cdot\Big(\int_{\mathbb{R}^n\setminus 2\Delta}\big(\frac{1}{\tau^{n+1}}e^{-c\frac{\mathrm{dist}(x,\Delta)^2}{\tau}}\Vert f\Vert_{L^1(\Delta)}\big)^2dx\Big)^{1/2} \Big].
    \end{align*}
    Further, we can use Hölder's inequality and Poincar\'{e}'s inequality and continue with
    \begin{align*}
        &\Vert\partial_t\AP\Vert_\infty\Big[\frac{\tau 2^{jn/2}l(\Delta)^n}{\sqrt{t^2-\tau^2}\tau^{n+1}}e^{-\frac{2^{2(j-1)} l(\Delta)^2}{\tau^2}}\Vert f\Vert_{L^2(\Delta)}\Vert g\Vert_{L^2}
        \\
        &\qquad +  \frac{2^{jn/2}l(\Delta)^n}{\sqrt{t^2-\tau^2}^{n+1}}e^{-\frac{2^{2(j-1)} l(\Delta)^2}{t^2-\tau^2}}\Vert g\Vert_{L^2(\tilde{\Delta})}\Vert f\Vert_{L^2}
        \\
        &\qquad + \frac{2^{jn/2}l(\Delta)^{n/2}}{\sqrt{t^2-\tau^2}^{n+1}}\Vert g\Vert_{L^2(\tilde{\Delta})}\Big(\int_{2^{j-1}l(\Delta)}^\infty r^{n-1}e^{-\frac{r^2}{t^2-\tau^2}}dr\Big)^{1/2}
        \\
        &\qquad\qquad\cdot\frac{l(\Delta)^{n/2}}{\tau^{n}}\Vert f\Vert_{L^2(\Delta)}\Big(\int_{l(\Delta)}^\infty r^{n-1}e^{-\frac{r^2}{\tau^2}}dr\Big)^{1/2} \Big]
        \\
        &\lesssim \Vert\partial_t\AP\Vert_\infty\Vert \nabla_{||}f\Vert_{L^2(\Delta)}\Vert g\Vert_{L^2}\Big[\frac{\tau 2^{jn/2}l(\Delta)^{n+1}}{\sqrt{t^2-\tau^2}\tau^{n+1}}e^{-\frac{2^{2(j-1)} l(\Delta)^2}{\tau^2}}  +  \frac{2^{jn/2}l(\Delta)^{n+1}}{\sqrt{t^2-\tau^2}^{n+1}}e^{-\frac{2^{2(j-1)} l(\Delta)^2}{t^2-\tau^2}}
        \\
        &\qquad + \frac{2^{jn/2}l(\Delta)^{n+1}}{\sqrt{t^2-\tau^2}^{n/2+1}\tau^{n/2}}\Big(\int_{2^{j-1}l(\Delta)/\sqrt{t^2-\tau^2}}^\infty r^{n-1}e^{-r^2}dr\Big)^{1/2}\Big(\int_{l(\Delta)/\tau}^\infty r^{n-1}e^{-r^2}dr\Big)^{1/2}\Big].
    \end{align*}
    The integral expressions are a little bit delicate. If \(n\) is even, than \(\int_{a}^\infty r^{n-1}e^{-r^2}dr=P(a)e^{-a^2}\) where \(P\) is a polynomial of degree \(n-2\). If \(n\) is odd, we can bound the integral above by the same expression with \(n+1\) instead, since we only integrate over numbers greater than \(1\). Hence, we can bound the first integral by \(\frac{2^{(j-1)(n-2)}l(\Delta)^{n-2}}{\sqrt{t^2-\tau^2}^{n-2}}e^{-\frac{2^{2(j-1)}l(\Delta)}{t^2-\tau^2}}\) or \(\frac{2^{(j-1)(n-1)}l(\Delta)^{n-1}}{\sqrt{t^2-\tau^2}^{n-1}}e^{-\frac{2^{2(j-1)}l(\Delta)}{t^2-\tau^2}}\). Since \(\frac{2^{j-1}l(\Delta)}{\sqrt{t^2-\tau^2}}\geq 1\), we can bound both expressions by \(\frac{2^{n(j-1)}l(\Delta)^{n}}{\sqrt{t^2-\tau^2}^{n}}e^{-\frac{2^{2(j-1)}l(\Delta)}{t^2-\tau^2}}\). Similarly, we obtain for the second integral the bound \(\frac{l(\Delta)^n}{\tau^n}e^{-\frac{l(\Delta)^2}{\tau^2}}\).
    Plugging in these bounds yields
    \begin{align*}
        &\lesssim \Vert\partial_t\AP\Vert_\infty\Vert\nabla_{||}f\Vert_{L^2}\Big[\frac{\tau 2^{jn/2}l(\Delta)^{n+1}}{\sqrt{t^2-\tau^2}\tau^{n+1}}e^{-\frac{2^{2(j-1)} l(\Delta)^2}{\tau^2}} +  \frac{2^{jn/2}l(\Delta)^{n+1}}{\sqrt{t^2-\tau^2}^{n+1}}e^{-\frac{2^{2(j-1)} l(\Delta)^2}{t^2-\tau^2}}
        \\
        &\qquad + \frac{2^{jn}l(\Delta)^{2n+1}}{\sqrt{t^2-\tau^2}^{n+1}\tau^{n}}e^{-\frac{2^{2(j-1)} l(\Delta)^2}{t^2-\tau^2}}e^{-\frac{l(\Delta)^2}{\tau^2}} \Big]
        \\
        &\lesssim \frac{1}{t}\Big[\frac{l(\Delta)^{n/2+1}}{\sqrt{t^2-\tau^2}\tau^{n}}e^{-\frac{2^{2(j-1)} l(\Delta)^2}{\tau^2}} +\frac{ l(\Delta)^{n/2+1}}{\sqrt{t^2-\tau^2}^{n+1}}e^{-\frac{2^{2(j-1)} l(\Delta)^2}{t^2-\tau^2}}
        \\
        &\qquad + \frac{2^{jn}l(\Delta)^{3n/2+1}}{\sqrt{t^2-\tau^2}^{n+1}\tau^{n}}e^{-\frac{2^{2(j-1)} l(\Delta)^2}{t^2-\tau^2}}e^{-\frac{l(\Delta)^2}{\tau^2}}\Big].
    \end{align*}
    In the last step we used \(|\partial_tA|\leq \frac{C}{t}\) and that \(f\) is a Hardy-Sobolev atom which yields \(\Vert \nabla_{||} f\Vert_{L^2(\mathbb{R}^n)}\leq |\Delta|^{1/2-1/\beta} \Vert \nabla_{||} f\Vert_{L^\beta(\mathbb{R}^n)}\leq |\Delta|^{-1/2}\) by Hölder inequality.
    \medskip
    
    Now, we can estimate the integral over a truncated cone for a point \(y\in\partial\Omega\) with \(y\in 2^{j+1}\Delta\setminus2^j\Delta\). We have
    \begin{align}
        &\Big(\int_{\Gamma^{2^jl(\Delta)}(y)}\frac{|W_2(f)|^2}{t^{n+1}}dxdt\Big)^{1/2}\nonumber
        \\
        &=\Big(\int_0^{2^{j-1}l(\Delta)} \frac{1}{t^{n+1}}\int_{\Delta_{t/2}(y)}\Big|\int_0^t2\tau e^{-(t^2-\tau^2)L^t}\mathrm{div}(\partial_t\AP\nabla e^{-\tau^2L^t}f)d\tau\Big|^2dx dt\Big)^{1/2}\nonumber
        \\
        &\lesssim \Big(\int_0^{2^{j-1}l(\Delta)} \frac{1}{t^{n+3}}\Big(\int_0^t\frac{2^{jn/2}l(\Delta)^{n/2+1}}{\sqrt{t^2-\tau^2}\tau^{n}}e^{-\frac{2^{2(j-1)} l(\Delta)^2}{\tau^2}}\nonumber
        \\
        &\qquad +\frac{2^{jn/2}l(\Delta)^{n/2+1}}{\sqrt{t^2-\tau^2}^{n+1}}e^{-\frac{2^{2(j-1)} l(\Delta)^2}{t^2-\tau^2}} + \frac{2^{jn}l(\Delta)^{3n/2+1}}{\sqrt{t^2-\tau^2}^{n+1}\tau^{n}}e^{-\frac{2^{2(j-1)} l(\Delta)^2}{t^2-\tau^2}}e^{-\frac{l(\Delta)^2}{\tau^2}}d\tau\Big)^2 dt\Big)^{1/2}.\label{eq:InArgumentEquation1}
    \end{align}
    Here we use the observation that a function of type \((0,t]\to\mathbb{R}, \rho\mapsto \frac{1}{\rho^n}e^{-\frac{c}{\rho^2}}\) is maximized in \(\rho=t\) if \(t\leq c\) and in \(\rho=c\) if \(t\geq c\). Applying this observation to each of the expressions in \eqref{eq:InArgumentEquation1} gives
    \begin{align}
        \eqref{eq:InArgumentEquation1}&\lesssim \Big(\int_0^{2^{j-1}l(\Delta)} \Big(\frac{2^{jn}l(\Delta)^{n+2}}{t^{3n+3}} + \frac{2^{2jn}l(\Delta)^{3n+2}}{t^{5n+3}}\Big)\nonumber
        \\
        &\qquad\qquad \cdot e^{-2\frac{2^{2(j-1)} l(\Delta)^2}{t^2}}\Big(\int_0^t\frac{1}{\sqrt{t^2-\tau^2}}d\tau\Big)^2 dt\Big)^{1/2}\nonumber
        \\
        &\lesssim \Big(\int_0^{2^{j-1}l(\Delta)}\Big(\frac{1}{2^{j(2n+3)}l(\Delta)^{2n+1}} + \frac{1}{2^{j(3n+3)}l(\Delta)^{2n+1}}\Big) dt\Big)^{1/2}\nonumber
        \\
        &\lesssim \Big(\frac{1}{2^{j(n+1)}}+\frac{1}{2^{j(n+n/2+1)}}\Big)\frac{1}{l(\Delta)^{n}}
        \lesssim \frac{1}{2^{j(n+1)}}\frac{1}{l(\Delta)^{n}}.\label{eq:Proofshortconeheight}
    \end{align}

    On the other hand we have for the away part of the cone by Minkowski inequality
    
    \begin{align}
        &\Big(\int_{\Gamma(y)\setminus\{t\leq 2^{j-1}l(\Delta)\}}\frac{|W_2(f)|^2}{t^{n+1}}dxdt\Big)^{1/2}\nonumber
        \\
        &=\Big(\int_{2^{j-1}l(\Delta)}^\infty \frac{1}{t^{n+1}}\int_{\Delta_{t/2}(y)}\Big|\int_0^t2\tau e^{-(t^2-\tau^2)L^t}\mathrm{div}(\partial_t\AP\nabla e^{-\tau^2L^t}f)d\tau\Big|^2dx dt\Big)^{1/2}\nonumber
        \\
        &\lesssim\Big(\int_{2^{j-1}l(\Delta)}^\infty \frac{1}{t^{n+1}}\Big(\int_0^{t/2}\tau \Vert e^{-(t^2-\tau^2)L^t}\mathrm{div}(\partial_t\AP\nabla e^{-\tau^2L^t}f)\Vert_{L^2(\Delta_{t/2}(y))}d\tau\Big)^2 dt\Big)^{1/2}\nonumber
        \\
        &\qquad+ \Big(\int_{2^{j-1}l(\Delta)}^\infty \frac{1}{t^{n+1}}\Big(\int_{t/2}^t\tau \Vert e^{-(t^2-\tau^2)L^t}\mathrm{div}(\partial_t\AP\nabla e^{-\tau^2L^t}f)\Vert_{L^2(\Delta_{t/2}(y))}d\tau\Big)^2 dt\Big)^{1/2}.\label{eq:NonlocalConeEstimateW2}
    \end{align}
    We split the inner integral into two integrals, one over small \(\tau\) and one over large \(\tau\).

    We now look at the inner \(L^2\) norm in just the \(x\) components. For \(t/2\leq \tau\leq t\) we have by \refprop{prop:L2boundednessOfSemigroupOperators} and Hölder's inequality
    \begin{align*}
        &\tau\Vert e^{-(t^2-\tau^2)L^t}\mathrm{div}(\partial_t\AP\nabla e^{-\tau^2L^t}f)\Vert_{L^2(\Delta_{t/2}(y))}
        \\
        &\qquad\lesssim \frac{\tau}{\sqrt{t^2-\tau^2}}\Vert\partial_t A\Vert_{\infty}\Vert\nabla e^{-\tau^2L^t}f\Vert_{L^2(\mathbb{R}^n)}
        \lesssim \frac{\tau}{t\sqrt{t^2-\tau^2}\tau^{n+1}}\Vert f\Vert_{L^1(\mathbb{R}^n)}
        \\
        &\qquad\lesssim  \frac{1}{t\sqrt{t^2-\tau^2}t^{n}}\Vert f\Vert_{L^1(\mathbb{R}^n)}
        \lesssim  \frac{l(\Delta)^{n/2+1}}{t\sqrt{t^2-\tau^2}t^{n}}\Vert \nabla_{||}f\Vert_{L^2(\mathbb{R}^n)}
        \lesssim  \frac{l(\Delta)}{t\sqrt{t^2-\tau^2}t^{n/2}}.
    \end{align*}

    For \(0\leq \tau\leq t/2\) we proceed with a dualising function \(g\in L^2(\Delta_{t/2}(y))\) and the same ideas as in the local cone part to get
    \begin{align*}
        &\tau\Vert e^{-(t^2-\tau^2)L^t}\mathrm{div}(\partial_t\AP\nabla e^{-\tau^2L^t}f)\Vert_{L^2(\Delta_{t/2}(y))}
        \\
        &\lesssim \tau\Vert\partial_t A\Vert_{\infty}\Big(\Vert \nabla e^{-(t^2-\tau^2)L^t}g \Vert_{L^2(2\Delta)}\Vert\nabla e^{-\tau^2L^t}f\Vert_{L^2(2\Delta)}
        \\
        &\qquad\qquad + \Vert \nabla e^{-(t^2-\tau^2)L^t}g \Vert_{L^2(\mathbb{R}^n)}\Vert\nabla e^{-\tau^2L^t}f\Vert_{L^2(\mathbb{R}^n\setminus 2\Delta)}\Big)
        \\
        &\lesssim\frac{\tau}{t} \Big(\frac{l(\Delta)^{n/2}}{\sqrt{t^2-\tau^2}^{n+1}}\Vert g\Vert_{L^1(\Delta_{t/2}(y))}\Vert \nabla_{||} f\Vert_{L^2(\mathbb{R}^n)}
        \\
        &\qquad\qquad+ \frac{1}{\sqrt{t^2-\tau^2}\tau^{n+1}}\Vert g \Vert_{L^2}\Vert f\Vert_{L^1(\Delta)}\big(\int_{l(\Delta)}^\infty r^{n-1}e^{-\frac{r^2}{\tau^2}}dr\big)^{1/2}\Big)
        \\
        &\lesssim \frac{\tau}{t}\Big(\frac{l(\Delta)^{n/2}}{\sqrt{t^2-\tau^2}^{n+1}}\Vert g\Vert_{L^1(\Delta_{t/2}(y))}\Vert \nabla_{||} f\Vert_{L^2(\mathbb{R}^n)}
        \\
        &\qquad\qquad+ \frac{l(\Delta)^{n/2}}{\sqrt{t^2-\tau^2}\tau^{n+1}}\Vert g \Vert_{L^2}\Vert f\Vert_{L^1(\Delta)}e^{-c\frac{l(\Delta)^2}{\tau^2}}\Big)
        \\
        &\lesssim \frac{\tau}{t}\Big(\frac{l(\Delta)^{n/2}t^{n/2}}{\sqrt{t^2-\tau^2}^{n+1}}\Vert g\Vert_{L^2}\Vert \nabla_{||} f\Vert_{L^2(\mathbb{R}^n)} + \frac{l(\Delta)^{n+1}}{\sqrt{t^2-\tau^2}\tau^{n+1}}\Vert g \Vert_{L^2}\Vert \nabla_{||}f\Vert_{L^2(\Delta)}e^{-c\frac{l(\Delta)^2}{\tau^2}}\Big)
        \\
        &\lesssim \frac{\tau}{t}\Big(\frac{l(\Delta)^{n/2}t^{n/2}}{\sqrt{t^2-\tau^2}^{n+1}}\Vert \nabla_{||} f\Vert_{L^2(\mathbb{R}^n)} + \frac{l(\Delta)^{n+1}}{\sqrt{t^2-\tau^2}t^{n+1}}\Vert \nabla_{||}f\Vert_{L^2(\Delta)}\Big)
        \\
        &\lesssim  \frac{l(\Delta)^{n/2+1}}{\sqrt{t^2-\tau^2} t^{n/2+1}}\Vert\nabla_{||} f\Vert_{L^2(\mathbb{R}^n)}\lesssim \frac{l(\Delta)}{\sqrt{t^2-\tau^2} t^{n/2+1}}.
    \end{align*}
    Hence we obtained the same bound for both small and large \(\tau\), and we can continue \eqref{eq:NonlocalConeEstimateW2} with
    \begin{align*}
        &\Big(\int_{2^{j-1}l(\Delta)}^\infty \frac{1}{t^{n+1}}\Big(\int_0^{t/2}\tau \Vert e^{-(t^2-\tau^2)L^t}\mathrm{div}(\partial_t\AP\nabla e^{-\tau^2L^t}f)\Vert_{L^2(\Delta_{t/2}(y))}d\tau\Big)^2 dt\Big)^{1/2}
        \\
        &\qquad+ \Big(\int_{2^{j-1}l(\Delta)}^\infty \frac{1}{t^{n+1}}\Big(\int_{t/2}^t\tau \Vert e^{-(t^2-\tau^2)L^t}\mathrm{div}(\partial_t\AP\nabla e^{-\tau^2L^t}f)\Vert_{L^2(\Delta_{t/2}(y))}d\tau\Big)^2 dt\Big)^{1/2}
        \\
        &\lesssim\Big(\int_{2^{j-1}l(\Delta)}^\infty \frac{l(\Delta)^{2}}{t^{2n+3}}\Big(\int_0^{t}\frac{1}{\sqrt{t^2-\tau^2}} d\tau\Big)^2 dt\Big)^{1/2}
        \lesssim\Big(\int_{2^{j-1}l(\Delta)}^\infty \frac{l(\Delta)^{2}}{t^{2n+3}} dt\Big)^{1/2}
        \\
        &\lesssim \frac{1}{2^{j(n+1)}l(\Delta)^{n}}.
    \end{align*}
    Together with \eqref{eq:Proofshortconeheight} this yields
    \begin{align*}
        \Vert \mathcal{A}(W_2(f))\Vert_{L^1(\mathbb{R}^n\setminus 5\Delta)}&\lesssim \sum_{j\geq 2}\int_{2^{j+1}\Delta\setminus 2^j\Delta} \Big(\int_{\Gamma^{2^{j}l(\Delta)}(y)}\frac{|W_2(f)|^2}{t^{n+1}}dxdt\Big)^{1/2}
        \\
        &\qquad\qquad + \Big(\int_{\Gamma(y)\setminus\{t\leq 2^{j-1}l(\Delta)\}}\frac{|W_2(f)|^2}{t^{n+1}}dxdt\Big)^{1/2}dy
        \\
        &\lesssim \sum_{j\geq 2} \big(2^{j+1}l(\Delta)\big)^n\frac{1}{2^{j(n+1/2)}l(\Delta)^n}\leq C.
    \end{align*}

    Lastly, \eqref{item:HardySquareBoundNabla} follows from \eqref{item:HardySquareBound} and Cacciopolli type inequality \refprop{prop:CacciopolliTypeInequality} like previously in \eqref{item:PointwiseLocalSquareBoundNablaW2} of \reflemma{lemma:PointwiseCarlesonFunctionBoundW_2}.
\end{proof}

\section{Area function bounds on \(\nabla_{||}\mathcal{P}_t\) and \(W_1\)}\label{section:W1}

As we saw in the proof of \reflemma{lemma:HardyNormBoundsW_2}, we would like to split off the local part when proving the Hardy-Sobolev to \(L^1\) bound for \(W_1\). Therefore, we used \(L^\beta-L^\beta\) boundedness of the area function of \(W_2\) (see \reflemma{lemma:HardyNormBoundsW_2beta}). For \(\nabla_{||}\mathcal{P}_t\) and \(W_1\) we would like to proceed similarly. Therefore, the first subsection establishes \(L^2-L^2\) boundedness of the area functions of \(\nabla_{||}\mathcal{P}_t\) and \(W_1\).

\subsection{Area function of \(\nabla_{||}\mathcal{P}_t\) in \(L^2\)}

\begin{prop}\label{prop:L^2SquareFctBoundNablaP}
For \(f\in W^{1,2}(\mathbb{R}^n)\) it holds that
\begin{align}
    \Vert \mathcal{A}(\nabla_{||}\mathcal{P}_tf)\Vert_{L^2(\mathbb{R}^n)}\lesssim \Vert \nabla f\Vert_{L^2(\mathbb{R}^n)}.\label{eq:L2BoundAreaNablaP}
\end{align}
\end{prop}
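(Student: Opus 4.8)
\textbf{Proof proposal for \refprop{prop:L^2SquareFctBoundNablaP}.}
The plan is to pass to the associated vertical square function, use uniform ellipticity to trade the tangential gradient for the generator \(L^t_{||}\), and then absorb the transversal variation of the coefficients via the \(L^2\) area function bound for \(W_2\) already established in Section~\ref{section:W2}. By the standard Fubini identity \(\int_{\mathbb{R}^n}\mathcal{A}(F)(x)^2\,dx\simeq\int_{\mathbb{R}^{n+1}_+}|F(x,t)|^2\,\frac{dx\,dt}{t}\), the estimate \eqref{eq:L2BoundAreaNablaP} is equivalent to the vertical bound \(\int_0^\infty\Vert\nabla_{||}\mathcal{P}_tf(\cdot,t)\Vert_{L^2(\mathbb{R}^n)}^2\,\frac{dt}{t}\lesssim\Vert\nabla f\Vert_{L^2}^2\). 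Fixing \(t\) and applying uniform ellipticity of \(\AP(\cdot,t)\) to the function \(\mathcal{P}_tf\), followed by an integration by parts in \(x\),
\[
\Vert\nabla_{||}\mathcal{P}_tf\Vert_{L^2}^2\;\lesssim\;\mathrm{Re}\!\int_{\mathbb{R}^n}\AP(x,t)\nabla_{||}\mathcal{P}_tf\cdot\overline{\nabla_{||}\mathcal{P}_tf}\,dx\;=\;\mathrm{Re}\,\langle L^t_{||}\mathcal{P}_tf,\mathcal{P}_tf\rangle\;=\;\frac{1}{2t}\,\mathrm{Re}\,\langle W_1f(\cdot,t),\mathcal{P}_tf\rangle,
\]
the last step being the defining relation \(W_1f=2tL^t_{||}e^{-t^2L^t_{||}}f\) from Section~\ref{section:rho} (signs suppressed).

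Next I would insert the identity \(W_1f=\partial_t\mathcal{P}_tf-W_2f\), so that \(\langle W_1f,\mathcal{P}_tf\rangle=\tfrac12\partial_t\Vert\mathcal{P}_tf\Vert_{L^2}^2-\langle W_2f,\mathcal{P}_tf\rangle\), and integrate against \(\frac{dt}{t}\). The ``\(\partial_t\mathcal{P}_t\)'' piece is treated by integrating by parts in \(t\): the endpoint term at \(t=\infty\) vanishes by decay of the semigroup, the endpoint behaviour at \(t=0\) is controlled since \(\mathcal{P}_tf\to f\) with the chain-rule factor \(t\) forcing \(\partial_t\mathcal{P}_tf\) to vanish there, and the remaining volume term is again a vertical square function of \(\mathcal{P}_tf\) handled via \refprop{prop:L2boundednessOfSemigroupOperators}; for a \(t\)-\emph{independent} operator this reduces to the (known) Kato-type identity \(\int_0^\infty\Vert L_{||}e^{-uL_{||}}f\Vert_{L^2}^2\,du=\int_0^\infty\Vert L_{||}^{1/2}e^{-uL_{||}}g\Vert_{L^2}^2\,du\simeq\Vert g\Vert_{L^2}^2\) with \(g=L_{||}^{1/2}f\) and \(\Vert g\Vert_{L^2}\simeq\Vert\nabla f\Vert_{L^2}\). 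The ``\(W_2\)'' piece, \(\int_0^\infty\frac{1}{t^2}|\langle W_2f,\mathcal{P}_tf\rangle|\,dt\), is bounded by Cauchy--Schwarz in \((x,t)\) by \(\Vert\mathcal{A}(W_2f)\Vert_{L^2}\Vert\mathcal{A}(\mathcal{P}_tf)\Vert_{L^2}\)-type quantities, hence, using the \(p=2\) case of \refcor{cor:AreaFctBoundsInLpW2} together with \refprop{prop:L2boundednessOfSemigroupOperators}, by \(\Vert\nabla f\Vert_{L^2}^2\).

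The main obstacle is genuinely the transversal variation of the coefficients: when \(\partial_t\AP\equiv0\) one has \(W_2\equiv0\) and the whole argument collapses to classical semigroup and Kato square function estimates for the ellipticized heat semigroup, whereas for \(\partial_t\AP\neq0\) the commutator-type error \(W_2\) has to be absorbed — this is precisely why \refprop{prop:L^2SquareFctBoundNablaP} is placed after Section~\ref{section:W2}, since the bound \(\Vert\mathcal{A}(W_2f)\Vert_{L^2}\lesssim\Vert\nabla f\Vert_{L^2}\) (itself resting on the pointwise Carleson estimate \reflemma{lemma:PointwiseCarlesonFunctionBoundW_2} and the Carleson-measure hypothesis on \(\sup_{B(x,t,t/2)}|\partial_tA|\)) is exactly what makes \(W_2\) a bounded perturbation rather than an obstruction. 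A robust alternative I would also keep in mind, mirroring the derivation of the \(W_1/W_2\) splitting, is to compare \(\nabla_{||}\mathcal{P}_tf\) directly with the gradient of the frozen semigroup \(\nabla_{||}e^{-t^2L^s_{||}}f\big|_{s=t}\), estimate the difference by a Duhamel and off-diagonal argument of the type used in \reflemma{lemma:HardyNormBoundsW_2} (invoking \refprop{prop:Off-diagonalEstimates} and \refprop{prop:PROP11}), and then quote the \(t\)-independent square function bound for the frozen operator; both routes need the same essential input.
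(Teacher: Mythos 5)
Your algebraic skeleton (ellipticity plus integration by parts to reach \(\langle L^t_{||}\mathcal{P}_tf,\mathcal{P}_tf\rangle\), then the splitting \(\partial_t\mathcal{P}_tf=W_1f+W_2f\)) is indeed the same germ as the computation in the paper's proof, but the paper runs it only \emph{after} truncating at a height \(a\) and localizing the datum, and carried out globally, as you propose, the argument breaks at the \(t\to0\) endpoint; this is a genuine gap, not a technicality. Concretely, your ``\(\partial_t\mathcal{P}_t\) piece'' is (up to constants) \(\int_0^\infty t^{-2}\partial_t\Vert\mathcal{P}_tf\Vert_{L^2}^2\,dt\); integrating by parts in \(t\) on \((a,\infty)\) produces either the boundary term \(a^{-2}\Vert\mathcal{P}_af\Vert_{L^2}^2\approx a^{-2}\Vert f\Vert_{L^2}^2\) or the volume term \(\int t^{-3}\Vert\mathcal{P}_tf\Vert_{L^2}^2\,dt\), which diverges because \(\Vert\mathcal{P}_tf\Vert_{L^2}\to\Vert f\Vert_{L^2}\neq0\). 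Neither is \(\lesssim\Vert\nabla f\Vert_{L^2}^2\), and no bound involving \(\Vert f\Vert_{L^2}\) can be correct here, since \(\nabla_{||}\mathcal{P}_t(f+c)=\nabla_{||}\mathcal{P}_tf\) (constants are preserved), so the left-hand side sees \(f\) only modulo constants. The same defect sits in your \(W_2\) piece: Cauchy--Schwarz in \((x,t)\) forces the second factor \(\big(\int_0^\infty t^{-3}\Vert\mathcal{P}_tf\Vert_{L^2}^2dt\big)^{1/2}\), i.e.\ an area function of \(\mathcal{P}_tf/t\), which is again divergent; the paper never pairs the \(W_2\)-type term against \(\mathcal{P}_tf\) this crudely, but instead moves the divergence inside \(W_2\) by parts onto the gradient of the other factor and then invokes the Carleson hypothesis on \(\partial_tA\) through the duality \eqref{eq:DualityCarelsonNormNontangential} with a truncated nontangential maximal function. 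Your claim that \(\partial_t\mathcal{P}_tf\) ``vanishes at \(t=0\) because of the chain-rule factor \(t\)'' is not quantitative enough: at best \(\Vert\partial_t\mathcal{P}_tf\Vert_{L^2}=O(1)\Vert\nabla f\Vert_{L^2}\), which does not make \(t^{-2}\langle\partial_t\mathcal{P}_tf,\mathcal{P}_tf\rangle\) integrable near \(0\).

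What is missing is precisely the device the paper's proof is built on: fix \(a>0\), work with \(\int_a^\infty\Vert\nabla_{||}\mathcal{P}_sf\Vert_{L^2}^2\,ds/s\), decompose \(\partial\Omega\) into dyadic balls \(Q\) of the matching scale \(l(Q)\approx a\), replace \(f\) by \(f-(f)_{2Q}\) on each \(Q\), split into the local piece \(f^Q=\chi_{2Q}(f-(f)_{2Q})\) and a far piece controlled by kernel/off-diagonal bounds (\refprop{Prop:KernelBounds}, \refprop{prop:Off-diagonalEstimates}) together with Poincar\'e, and run the ellipticity--Duhamel identity only on the localized data; then the dangerous boundary-type term becomes \(a^{-2}\sum_Q\Vert f-(f)_{2Q}\Vert_{L^2(2Q)}^2\lesssim\Vert\nabla f\Vert_{L^2}^2\) by Poincar\'e at scale \(a\), and the \(W_2\)-type error for each \(f^Q\) is absorbed via the Carleson condition and the (away-truncated) nontangential maximal function, not via \refcor{cor:AreaFctBoundsInLpW2}. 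It is this localization with subtraction of averages that converts \(\Vert f\Vert\)-type quantities into \(\Vert\nabla f\Vert\)-type quantities; soft limiting statements cannot substitute for it, because the small-\(t\) region is where the entire difficulty is concentrated (already for the Laplacian the untruncated integral \(\int_0^1\Vert\nabla e^{t^2\Delta}f\Vert_{L^2}^2\,dt/t\) diverges logarithmically, so any viable argument must produce \(a\)-uniform estimates for the truncated quantity rather than argue at \(t=0\) directly). Finally, your fallback of comparing \(\nabla_{||}\mathcal{P}_tf\) with ``the frozen semigroup \(\nabla_{||}e^{-t^2L^s_{||}}f\big|_{s=t}\)'' is vacuous, since that expression \emph{is} \(\nabla_{||}\mathcal{P}_tf\).
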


The main idea of the proof is inspired by Proposition 5.1 in \cite{hofmann_lp_2022}, but since our operator has additional dependencies in the transversal \(t-\)direction, we require completely new ideas.

\begin{proof}
    Before we commence with the proof, we observe the following using only the kernel estimates (\refprop{Prop:KernelBounds}) of the operators \(e^{-tL},\partial_te^{-tL}\): Let \(f\) be a function with \(\mathrm{supp}(f)\subset E\) and let \(E, F\subset\mathbb{R}^n\) be two disjoint sets with \(d:=\mathrm{dist}(E,F)>0\). We call \(\tilde{F}:=F+B(0,d/2)\) an enlargement of \(F\) and we choose a cut-off function \(\psi\in C_0(\tilde{F})\) such that \(\psi\equiv 1\) on \(F\) and \(|\nabla \psi|\lesssim \frac{1}{d}\). Then
\begin{align*}
    \Vert \nabla e^{-s^2L^s}f\Vert_{L^2(F)}^2&\lesssim \int_{\tilde{F}} \AP(x,s)\nabla e^{-s^2L^s}f(x)\cdot \nabla e^{-s^2L^s}f(x)\psi^2(x) dx
    \\
    &\lesssim \int_{\tilde{F}} L^s e^{-s^2L^s}f(x) e^{-s^2L^s}f(x)\psi^2(x)
    \\
    &\qquad + 2\psi(x)\nabla e^{-s^2L^s}f(x)\cdot \nabla \psi(x)  e^{-s^2L^s}f(x) dx
    \\
    &\lesssim \Vert L^s e^{-s^2L^s}f\Vert_{L^2(\tilde{F})} \Vert e^{-s^2L^s}f\Vert_{L^2(\tilde{F})}
    \\
    &\qquad + \sigma \Vert \psi\nabla e^{-s^2L^s}f \Vert^2_{L^2(\mathbb{R}^n)} + \frac{1}{\sigma}\Vert \frac{1}{d}e^{-s^2L^s}f\Vert_{L^2(\tilde{F})}^2.
\end{align*}
For a sufficiently small choice of \(\sigma\) we can hide the third term on the left hand side and use the kernel estimates (\refprop{Prop:KernelBounds}) to conclude for \(x\in \tilde{F}\)
\begin{align*}
    e^{-s^2L^s}f(x)=\int_{E}K_{s^2}(x,y)f(y)dy\lesssim \frac{1}{s^n}e^{-c\frac{d^2}{s^2}}\int_{E}f(y)dy=\frac{1}{s^n}e^{-c\frac{d^2}{s^2}}\Vert f\Vert_{L^1},
\end{align*}
and similarly
\[\Vert L^s e^{-s^2L^s}(f)\Vert_{L^2(\tilde{F})}\lesssim \frac{\sqrt{\sigma(\tilde{F})}}{s^{n+2}}e^{-c\frac{d^2}{s^2}}\Vert f\Vert_{L^1},\]
whence in total
\begin{align}
    \Vert \nabla e^{-s^2L^s}(f)\Vert_{L^2(F)}^2\lesssim \big(\frac{\sigma(\tilde{F})}{s^{2n+2}}+\frac{\sigma(\tilde{F})}{s^{2n}d^2}\big)e^{-c\frac{d^2}{s^2}}\Vert f\Vert_{L^1}^2\label{observation:KernelIntoL1}.
\end{align}
\\
    Now, we us turn to the proof of \eqref{eq:L2BoundAreaNablaP}. Let us fix a small \(a>0\) and consider 
    \[\int_a^\infty \int_{\mathbb{R}^n}\frac{|\nabla_{||}\mathcal{P}_sf|^2}{s} ds=:\int_a^\infty I ds.\]
    For this choice of \(a\) there exists a scale \(k\) with \(2^{-k}\approx a\) and the collection \(\mathcal{D}_k\) consists of boundary balls \(Q\) with \(l(Q)\approx 2^{-k}\approx a\) that cover \(\partial\Omega\) in such a way that the collection of \(2Q\) have finite overlap, i.e. \(1\leq |\sum_{Q\in \mathcal{D}_k} \chi_{Q}|\leq N\) for some \(N\in \mathbb{N}\). Note that this \(N\) is independent of \(a\). Then for \(s\geq a\) we have
\begin{align*}
s^2I&=\sum_{Q\in\mathcal{D}_k} s\Vert\nabla e^{-s^2 L^s}f\Vert_{L^2(Q)}^2=\sum_{Q\in\mathcal{D}_k}s\Vert\nabla e^{-s^2 L^s}(f-(f)_{2Q})\Vert_{L^2(Q)}^2
\\
&\lesssim\sum_{Q\in\mathcal{D}_k}s\Vert\nabla e^{-s^2 L^s}(\chi_{2Q}(f-(f)_{2Q}))\Vert_{L^2(Q)}^2
\\
&\qquad + \sum_{Q\in\mathcal{D}_k}s\Vert\nabla e^{-s^2 L^s}(\chi_{\mathbb{R}^n\setminus 2Q}(f-(f)_{2Q}))\Vert_{L^2(Q)}^2:=J+K.
\end{align*}

By observation \eqref{observation:KernelIntoL1} we obtain for the second term
\begin{align*}
     K\lesssim &\sum_{Q\in\mathcal{D}_k}\Big(\sum_{l\geq 1} \sqrt{s}\Vert\nabla e^{-s^2 L^s}(\chi_{2^{l+1}Q\setminus 2^lQ}(f-(f)_{2Q}))\Vert_{L^2(Q)}\Big)^2
     \\
     &\lesssim \sum_{Q\in\mathcal{D}_k}\Big(\sum_{l\geq 1}\Big(\frac{a^{n/2}}{s^{n+1/2}}+\frac{a^{n/2-1}}{s^{n-1/2}}\Big)e^{-c\frac{2^{2l}a^2}{s^2}}\Vert f-(f)_{2Q}\Vert_{L^1(2^{l+1}Q\setminus 2^lQ)}\Big)^2
\end{align*}
Analogously to previously in \eqref{observationf-MVonLargerDomain} but now for \(L^1\) norms, we have by Poincar\'{e} inequality
\[\Vert f-(f)_{2Q}\Vert_{L^1(2^{l+1}Q\setminus 2^lQ)}\lesssim (l+1)2^{(l+1)(n+1)}a^{n+1}\inf_{x\in Q}M[|\nabla f|](x),\]
and hence
\begin{align*}
     K&\lesssim \sum_{Q\in\mathcal{D}_k}\Big(\sum_{l\geq 1}\frac{(l+1)2^{(n+1)(l+1)}a^{n}}{s^{n-1/2}}e^{-c\frac{2^{2l}a^2}{s^{2}}}\sqrt{\int_{Q}M[\nabla f]^2(x)dx}\Big)^2
     \\
     &\lesssim \sum_{Q\in\mathcal{D}_k}\Big(\sum_{l\geq 1}\frac{(l+1)2^{(n+1)(l+1)}a^{n}}{s^{n-1/2}}e^{-c\frac{2^{2l}a^2}{s^{2}}}\Big)
     \\
     &\hspace{20mm}\cdot\Big(\sum_{l\geq 1}\frac{(l+1)2^{(n+1)(l+1)}a^{n}}{s^{n-1/2}}e^{-c\frac{2^{2l}a^2}{s^{2}}}\int_{Q}M[\nabla f]^2(x)dx\Big)
     \\
     &\lesssim \Big(\sum_{l\geq 1}\frac{(l+1)2^{(n+1)(l+1)}a^{n}}{s^{n-1/2}}e^{-c\frac{2^{2l}a^2}{s^{2}}}\Big)
     \\
     &\hspace{20mm}\cdot\Big(\sum_{l\geq 1}\frac{(l+1)2^{(n+1)(l+1)}a^{n}}{s^{n-1/2}}e^{-c\frac{2^{2l}a^2}{s^{2}}}\sum_{Q\in\mathcal{D}_k}\int_{Q}M[\nabla f]^2(x)dx\Big)
     \\
     &\lesssim \Big(\sum_{l\geq 1}\frac{(l+1)2^{(n+1)(l+1)}a^{n}}{s^{n-1/2}}e^{-c\frac{2^{2l}a^2}{s^2}}\Big)^2\Vert M[\nabla f]\Vert_{L^2(\mathbb{R}^n)}^2.
\end{align*}
We can bound \((l+1)\leq 2^l\) and, as previously in the proof of \reflemma{lemma:HardyNormBoundsW_2}, we can consider the sum over \(l\) as Riemann sum of the integral
\[\frac{a^{n}}{s^{n-1/2}}\int_1^\infty y^{n+2}e^{-y^2\frac{a^2}{s^2}}dy=\frac{s^{5/2}}{a^2}\int_{a/s}^\infty z^{n+2}e^{-z^2}dz.\]
If \(n\) is odd, then \(\int_{a/s}^\infty z^{n+2}e^{-z^2}dz=P(a/s)e^{-\frac{a^2}{s^2}}\), where \(P\) is a polynomial of degree \(n-1\). If \(n\) is even, we can reduce to the odd case by \(\int_{a/s}^\infty z^{n+2}e^{-z^2}dz\leq \frac{s}{a}\int_{a/s}^\infty z^{n+3}e^{-z^2}dz\leq \frac{s}{a}P(a/s)e^{-\frac{a^2}{s^2}}\). Hence for some polynomial \(P\) we have
\[\frac{s^{5/2}}{a^2}\int_{a/s}^\infty z^{n+2}e^{-z^2}dz\lesssim \frac{s^{7/2}}{a^3}P(a/s)e^{-\frac{a^2}{s^2}}.\]

Thus, we obtain that
\begin{align*}
    \int_a^\infty \frac{1}{s^2}K ds&\lesssim \int_a^\infty \frac{1}{s^2}\Big(\frac{s^{7/2}}{a^3}P(a/s)e^{-\frac{a^2}{s^2}}\Big)^2\Vert M[\nabla_{||} f]\Vert_{L^2(\mathbb{R}^n)}^2
    \\
    &\lesssim \Vert\nabla f\Vert_{L^2}^2 \int_a^\infty \frac{s^5}{a^6}P(a/s)e^{-\frac{a^2}{s^2}} ds.
\end{align*}
Since \(\frac{a}{s}\) stays in the interval \((0,1)\), the polynomial is bounded and we can bound the integral expression independently of the choice of \(a\), whence the same bound remains valid when \(a\) tends to \(0\). Hence this term is bounded above by \(\Vert\nabla f\Vert_{L^2}^2\).
\medskip

For the first term \(J\), we abbreviate notation by setting \(f^{Q}:=\chi_{2Q}(f-(f)_{2Q})\) and continue with
\begin{align*}
J&\lesssim \sum_{Q\in\mathcal{D}_k}s\Vert\nabla e^{-s^2 L^s}f^{Q}\Vert_{L^2(Q)}^2=\sum_{Q\in\mathcal{D}_k}\int_{\mathbb{R}^n}\AP \nabla e^{-s^2 L^{s}}f^{Q}  \cdot \nabla e^{-s^2 L^{s}}f^{Q} s dx 
\\
&=\sum_{Q\in\mathcal{D}_k}\int_{\mathbb{R}^n}L^{s}e^{-s^2 L^{s}}f^{Q}  \cdot e^{-s^2 L^{s}}f^{Q} s dx 
\\
&=\sum_{Q\in\mathcal{D}_k}\Big[\int_{\mathbb{R}^n}\partial_s\big(e^{-s^2 L^{s}}f^{Q}\big)  \cdot e^{-s^2 L^{s}}f^{Q} dx
\\
&\qquad -\int_{\mathbb{R}^n}\Big(\int_0^s2\tau e^{-(s^2-\tau^2) L^{s}}\mathrm{div}(\partial_s\AP\nabla e^{-\tau^2L^s}f^{Q})d\tau\Big) e^{-s^2 L^s}f^{Q} dx
\\
&=\sum_{Q\in\mathcal{D}_k} J^{s,Q}_1+J^{s,Q}_2.
\end{align*}
Here we used that \(\partial_s\big(e^{-s^2 L^{s}}f_{Q}\big)\) can be computed like \(\partial_s\big(e^{-s^2 L^{s}}f\big)\) in Section \ref{section:rho}.

For \(J_2^{s,Q}\) we have by integration by parts and \eqref{eq:DualityCarelsonNormNontangential}
\begin{align}
    &\int_a^\infty J_2^{s,Q}ds
    \\
    &=\int_a^\infty \frac{1}{s^2}\int_{\mathbb{R}^n}\Big(\int_0^s2\tau e^{-(s^2-\tau^2) L^{s}}\mathrm{div}(\partial_s\AP\nabla e^{-\tau^2L^s}f^{Q})d\tau\Big) e^{-s^2 L^s}f^{Q} dx ds\nonumber
    \\
    &=\int_{\mathbb{R}^n}\int_a^\infty \frac{1}{s^2}\int_0^s2\tau \partial_s\AP\nabla e^{-\tau^2L^s}f^{Q} \cdot\nabla e^{-(s^2-\tau^2) L^{s}}e^{-s^2 L^s}f^{Q} d\tau ds dx\nonumber
    \\
    &\lesssim \Vert\sup_{B(x,t,t/2)} |\partial_s\AP|\Vert_{\mathcal{C}} \int_{\mathbb{R}^n}\tilde{N}_{1,a}\Big( \frac{1}{s^2}\int_0^s2\tau |\nabla e^{-\tau^2L^s}f^{Q}||\nabla e^{-(s^2-\tau^2) (L^{s})^*}e^{-s^2 L^s}f^{Q}| d\tau\Big) dx\label{eq:nontangnetialexpresssionbound1}
\end{align}
where \(\tilde{N}_{1,a}\) the the away truncated version of the nontangential maximal function (cf. \eqref{def:NontangentialMaximalFucntionTruncatedAway}). Let us discuss the appearing nontangential maximal function. For \(t\geq a\) and \(x\in \mathbb{R}^n\), Hölder yields
\begin{align}
    &\fint_{B(x,t,t/2)}\Big(\frac{1}{s^2}\int_0^s2\tau |\nabla e^{-\tau^2L^s}f^{Q}| |\nabla e^{-(s^2-\tau^2) (L^{s})^*}e^{-s^2 L^s}f^{Q}| d\tau\Big) dyds\nonumber
    \\
    &\lesssim \fint_{t/2}^{3t/2}\frac{1}{s^2}\int_0^s2\tau \big(\fint_{\Delta(x,t/2)}|\nabla e^{-\tau^2L^s}f^{Q}|^2dy\big)^{1/2} 
    \\
    &\hspace{40mm}\cdot\big(\fint_{\Delta(x,t/2)}|\nabla e^{-(s^2-\tau^2) (L^{s})^*}e^{-s^2 L^s}f^{Q}|^2dy\big)^{1/2} d\tau ds.\label{eq:nontangentialneed1}
\end{align}
We can now distinguish two cases: First, we assume that \(x\in 8Q\), then by \refprop{prop:L2boundednessOfSemigroupOperators}
\begin{align*}
    \big(\fint_{\Delta(x,t/2)}|\nabla e^{-\tau^2L^s}f^{Q}|^2dy\big)^{1/2}\leq\frac{1}{t^{n/2}}\big(\int_{\mathbb{R}^n}|\nabla e^{-\tau^2L^s}f^{Q}|^2dy\big)^{1/2}\lesssim \frac{1}{t^{n/2}}\Vert \nabla f\Vert_{L^2(2Q)}
\end{align*}
and
\begin{align*}
\big(\fint_{\Delta(x,t/2)}|\nabla e^{-(s^2-\tau^2) (L^{s})^*}e^{-s^2 L^s}f^{Q}|^2dy\big)^{1/2} \lesssim \frac{1}{t^{n/2}}\Vert \nabla f\Vert_{L^2(2Q)}.
\end{align*}
Hence for \eqref{eq:nontangentialneed1} we obtain in this case
\begin{align*}
     &\fint_{B(x,t,t/2)}\Big(\frac{1}{s^2}\int_0^s2\tau |\nabla e^{-\tau^2L^s}f^{Q}| |\nabla e^{-(s^2-\tau^2) (L^{s})^*}e^{-s^2 L^s}f^{Q}| d\tau\Big) dyds
     \\
     &\lesssim \frac{1}{t^{n}}\Vert \nabla f\Vert_{L^2(2Q)}^2\fint_{t/2}^{3t/2}\int_0^s\frac{2\tau}{s^2}d\tau ds\lesssim \frac{1}{t^{n}}\Vert \nabla f\Vert_{L^2(2Q)}^2.
\end{align*}

Second, we assume that \(x\in \mathbb{R}^n\setminus 8 Q\). Let us assume that \[2^{j+1}a\approx 2^{j+1}l(Q)\geq\mathrm{dist}(x, Q)\geq 2^jl(Q)\approx 2^ja.\] We distinguish two more sub cases depending on whether \(t\) is greater or smaller than \(2^{j-1}a\). If we assume that \(a\leq t\leq 2^{j-1}a\), then \(\mathrm{dist}(\Delta(x,t/2),Q)\approx 2^{j-1}a\) and we obtain by off-diagonal estimates (\refprop{prop:Off-diagonalEstimates}) and Poincar\'{e}'s inequality
\begin{align*}
    \big(\fint_{\Delta(x,t/2)}|\nabla e^{-\tau^2L^s}f^{Q}|^2dy\big)^{1/2}&\leq\frac{e^{-\frac{2^{2(j-1)}a^2}{\tau^2}}}{\tau t^{n/2}}\Vert f-(f)_{2Q}\Vert_{L^2(2Q)}
    \\
    &\lesssim \frac{ae^{-\frac{2^{2(j-1)}a^2}{\tau^2}}}{\tau t^{n/2}}\Vert \nabla f\Vert_{L^2(2Q)},
\end{align*}
and
\begin{align*}
&\big(\fint_{\Delta(x,t/2)}|\nabla e^{-(s^2-\tau^2) (L^{s})^*}e^{-s^2 L^s}f^{Q}|^2dy\big)^{1/2} 
\\
&\lesssim \big(\fint_{\Delta(x,t/2)}|\nabla e^{-(s^2-\tau^2) (L^{s})^*}(\chi_{2^{j-2}Q} e^{-s^2 L^s}f^{Q})|^2dy\big)^{1/2}
\\
&\qquad + \big(\fint_{\Delta(x,t/2)}|\nabla e^{-(s^2-\tau^2) (L^{s})^*}(\chi_{\mathbb{R}^n\setminus 2^{j-2}Q} e^{-s^2 L^s}f^{Q})|^2dy\big)^{1/2}
\\
&\lesssim \big(\frac{e^{-\frac{2^{2(j-2)}a^2}{s^2-\tau^2}}}{\sqrt{s^2-\tau^2} t^{n/2}} + \frac{e^{-\frac{2^{2(j-2)}a^2}{s^2}}}{s t^{n/2}}\big)\Vert f-(f)_{2Q}\Vert_{L^2(2Q)}
\\
&\lesssim a\big(\frac{e^{-\frac{2^{2(j-2)}a^2}{s^2-\tau^2}}}{\sqrt{s^2-\tau^2} t^{n/2}} + \frac{e^{-\frac{2^{2(j-2)}a^2}{s^2}}}{s t^{n/2}}\big)\Vert \nabla f\Vert_{L^2(2Q)}.
\end{align*}
Hence for \eqref{eq:nontangentialneed1} we obtain in this case the bound
\begin{align*}
     \fint_{t/2}^{3t/2} \frac{1}{s^2}\int_0^s2\tau a^2\frac{e^{-\frac{2^{2(j-1)}a^2}{\tau^2}}}{\tau t^{n/2}}\big(\frac{e^{-\frac{2^{2(j-2)}a^2}{s^2-\tau^2}}}{\sqrt{s^2-\tau^2} t^{n/2}} + \frac{e^{-\frac{2^{2(j-2)}a^2}{s^2}}}{s t^{n/2}}\big)\Vert \nabla f\Vert_{L^2(2Q)}^2d\tau ds.
\end{align*}
Since \(0\leq \tau\leq s\approx t\) and the function \(\rho\mapsto e^{-\frac{2^{2(j-1)}a^2}{\rho^2}}\) is monotonically increasing, we can bound each of the exponential functions by some constant multiple of \(e^{-\frac{2^{2(j-2)}a^2}{t^2}}\). Next, we observe that the function \(t\mapsto \frac{1}{t^n}e^{-\frac{2^{2(j-1)}a^2}{t^2}}\) is maximized for \(t=2^{j-1}a\). Hence we can continue with
\begin{align*}
     &\lesssim \fint_{t/2}^{3t/2}\int_0^s\big(a^2\frac{1}{\sqrt{s^2-\tau^2} t^{n+2}} + \frac{1}{ st^{n+2}}\big)e^{-\frac{2^{2(j-2)}a^2}{t^2}}\Vert \nabla f\Vert_{L^2(2Q)}^2d\tau ds
     \\
     &\lesssim \fint_{t/2}^{3t/2}\int_0^s\big(a^2\frac{1}{\sqrt{s^2-\tau^2} 2^{(j-2)(n+3)}a^{n+2}} + \frac{1}{ s2^{(j-2)(n+2)}a^{n+2}}\big)\Vert \nabla f\Vert_{L^2(2Q)}^2d\tau ds
     \\
     &\lesssim 2^{-(j-2)(n+2)}a^{-n}\Vert \nabla f\Vert_{L^2(2Q)}^2.
\end{align*}

For the second sub case, where we assume \(t\geq 2^{j-1}a\), we proceed similarly to the case of \(x\) close to \(Q\). Here, we have for \eqref{eq:nontangentialneed1} by Hölder and \refprop{prop:L2boundednessOfSemigroupOperators}
\begin{align*}
    &\fint_{t/2}^{3t/2}\frac{1}{s^2}\int_0^s2\tau \big(\fint_{\Delta(x,t/2)}|\nabla e^{-\tau^2L^s}f^{Q}|^2dy\big)^{1/2} 
    \\
    &\hspace{40mm}\cdot\big(\fint_{\Delta(x,t/2)}|\nabla e^{-(s^2-\tau^2) (L^{s})^*}e^{-s^2 L^s}f^{Q}|^2dy\big)^{1/2} d\tau ds
    \\
    &\lesssim \fint_{t/2}^{3t/2}\frac{1}{s^2}\int_0^s\frac{2}{\sqrt{s^2-\tau^2}t^n}\Vert f-(f)_{2Q}\Vert_{L^2(2Q)}^2 d\tau ds
    \\
    &\lesssim \frac{a^2}{t^{n+2}}\Vert \nabla f\Vert_{L^2(2Q)}^2 d\tau ds\lesssim 2^{-(j-2)(n+2)}a^{-n}\Vert \nabla f\Vert_{L^2(2Q)}^2.
\end{align*}

In total, we obtain now for the integral over the nontangential maximal function in \eqref{eq:nontangnetialexpresssionbound1}
\begin{align*}
    &\int_{8Q}\tilde{N}_{1,a}\Big( \frac{1}{s^2}\int_0^s2\tau |\nabla e^{-\tau^2L^s}f^{Q}||\nabla e^{-(s^2-\tau^2) (L^{s})^*}e^{-s^2 L^s}f^{Q}| d\tau\Big) dx
    \\
    &\qquad + \sum_{j\geq 3} \int_{2^{j+1}Q\setminus 2^{j}Q}\tilde{N}_a\Big( \frac{1}{s^2}\int_0^s2\tau |\nabla e^{-\tau^2L^s}f^{Q}||\nabla e^{-(s^2-\tau^2) (L^{s})^*}e^{-s^2 L^s}f^{Q}| d\tau\Big) dx
    \\
    &\lesssim 8|Q| a^{-n}\Vert\nabla f\Vert_{L^2(2Q)}^2 + \sum_{j\geq 3}|2^{j+1}Q| 2^{-(j-2)(n+2)}a^{-n} \Vert\nabla f\Vert_{L^2(2Q)}^2\lesssim \Vert\nabla f\Vert_{L^2(2Q)}^2.
\end{align*}

Hence, we obtain in total for the sum over all \(Q\)
\begin{align*}
     \int_a^\infty \sum_{Q\in\mathcal{D}_k} \frac{1}{s^2}J_2^{s,Q}&= \sum_{Q\in\mathcal{D}_k}\Vert\nabla f\Vert_{L^2(2Q)}^2\lesssim \Vert\nabla f\Vert_{L^2(\mathbb{R}^n)}^2.
\end{align*}

Lastly, for \(J_1^{s,Q}\) we have
\begin{align*}
     \int_a^\infty \frac{1}{s^2}\sum_{Q\in\mathcal{D}_k} J_1^{s,Q}=-\sum_{Q\in\mathcal{D}_k}\int_a^\infty \frac{1}{s^2}\partial_s\big(\Vert e^{-s^2L^s}f_{s,Q}\Vert_{L^2(\mathbb{R}^n)}^2\big)ds.
\end{align*}
Since the integrand is nonnegative, we can again use \refprop{prop:L2boundednessOfSemigroupOperators} and Poincar\'{e}'s inequality to bound this above by
\begin{align*}
     \int_a^\infty \frac{1}{s^2}\sum_{Q\in\mathcal{D}_k} J_1^{s,Q}&\lesssim -\frac{1}{a^2}\sum_{Q\in\mathcal{D}_k}\int_a^\infty \partial_s\big(\Vert e^{-s^2L^s}f^{Q}\Vert_{L^2(\mathbb{R}^n)}^2\big)ds 
     \\
     &\lesssim \frac{1}{a^2}\sum_{Q\in\mathcal{D}_k}\Vert e^{-a^2L^a}f^{Q}\Vert_{L^2(\mathbb{R}^n)}^2
     \\
     &\lesssim \frac{1}{a^2}\sum_{Q\in\mathcal{D}_k}\Vert f-(f)_{2Q}\Vert_{L^2(2Q)}^2\lesssim \Vert \nabla f\Vert_{L^2(\mathbb{R}^n)}^2.
\end{align*}
Since these upper bounds are all independent of \(a\), if we take the limit when \(a\) tends to \(0\), we obtain that \(\int_0^\infty I ds\lesssim \Vert \nabla_{||}f\Vert_{L^2}^2\).
\end{proof}

\subsection{Area function of \(W_1f\) in \(L^2\)}

The following proposition is the \(L^2\) to \(L^2\) area function bound on \(W_1\). Lemma 7.7 in \cite{ulmer_solvability_2025} proves this bound under a stronger condition on the coefficients on \(A\), but the adaptation to the \(L^1\) Carleson condition on \(\partial_t A\) needs significantly more delicate handling of all terms and new ideas.
\begin{prop}\label{prop:L^2SquareFctBoundW_1}
For \(f\in W^{1,2}(\mathbb{R}^n)\) it holds that
\begin{align}
    \Vert \mathcal{A}(W_1f)\Vert_{L^2(\mathbb{R}^n)}\lesssim \Vert \nabla f\Vert_{L^2(\mathbb{R}^n)}\label{eq:L2BoundAreaW1}
\end{align}
and
\begin{align}
    \Vert \mathcal{A}(t\nabla W_1f)\Vert_{L^2(\mathbb{R}^n)}\lesssim \Vert \nabla f\Vert_{L^2(\mathbb{R}^n)}.\label{eq:L2BoundAreaNablaW1}
\end{align}
\end{prop}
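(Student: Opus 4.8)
The plan is to deduce both \eqref{eq:L2BoundAreaW1} and \eqref{eq:L2BoundAreaNablaW1} from the area function bound for $\nabla_{||}\mathcal{P}_t$ already obtained in \refprop{prop:L^2SquareFctBoundNablaP}, together with the Cacciopolli inequality \eqref{lemma:w_tSqFctBound1} for $W_1f$, by an integration by parts followed by a self‑improvement argument — thereby avoiding a second run of the dyadic localization used for $\nabla_{||}\mathcal{P}_t$. I will use the elementary identity $\Vert\mathcal{A}(F)\Vert_{L^2(\mathbb{R}^n)}^2\approx\int_0^\infty\int_{\mathbb{R}^n}|F(y,t)|^2\,t^{-1}\,dy\,dt$, and abbreviate $T_a:=\int_a^{1/a}\int_{\mathbb{R}^n}|W_1f(y,t)|^2\,t^{-1}\,dy\,dt$ for $0<a<1$. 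Evaluating \refprop{prop:PROP11}(i) at $x=y$ gives $|W_1f(y,t)|\lesssim M[\nabla_{||}f](y)$ pointwise, so $\Vert W_1f(\cdot,t)\Vert_{L^2(\mathbb{R}^n)}\lesssim\Vert\nabla_{||}f\Vert_{L^2}$ uniformly in $t$, hence $T_a\lesssim\Vert\nabla_{||}f\Vert_{L^2}^2\log(1/a)<\infty$ for each $a$; this logarithmic a priori bound is all the finiteness the argument needs.

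\emph{Step 1: the tangential-gradient estimate follows from \eqref{eq:L2BoundAreaW1}.} Fix a Whitney-type covering of $\Omega$ by balls $B_j=B((z_j,s_j),s_j/16)$ of bounded overlap whose dilates $B((z_j,s_j),s_j/8)$ also have bounded overlap. Since $B((z_j,s_j),s_j/4)\subset\Omega$, the Cacciopolli inequality \eqref{lemma:w_tSqFctBound1} gives $\int_{B_j}|\nabla_{||}W_1f|^2\lesssim s_j^{-2}\int_{B((z_j,s_j),s_j/8)}|W_1f|^2$; summing over $j$ and using $t\approx s_j$ on the relevant balls yields $\Vert\mathcal{A}(t\nabla_{||}W_1f)\Vert_{L^2}\lesssim\Vert\mathcal{A}(W_1f)\Vert_{L^2}$, and the same estimate restricted to $t\in[a,1/a]$ (where the enlargement stays inside $[a/2,2/a]$) gives $\int_a^{1/a}\int_{\mathbb{R}^n}t\,|\nabla_{||}W_1f|^2\lesssim T_{a/2}$.

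\emph{Step 2: integration by parts and the recursion.} Writing $W_1f(x,t)=2tL_{||}^t\mathcal{P}_tf(x)=2t\,\mathrm{div}_{||}(\AP(x,t)\nabla_{||}\mathcal{P}_tf(x))$ and integrating by parts in the tangential variables at fixed $t$ (no boundary term; all pairings lie in $L^1(\mathbb{R}^n)$ since $\nabla_{||}\mathcal{P}_tf(\cdot,t)$ and $\nabla_{||}W_1f(\cdot,t)$ are in $L^2$ by \refprop{prop:L2boundednessOfSemigroupOperators}) gives
\[\int_{\mathbb{R}^n}\frac{|W_1f(y,t)|^2}{t}\,dy=-2\int_{\mathbb{R}^n}\AP(y,t)\nabla_{||}\mathcal{P}_tf(y)\cdot\nabla_{||}W_1f(y,t)\,dy.\]
Integrating over $t\in[a,1/a]$, taking absolute values, and then using boundedness of $\AP$, Cauchy--Schwarz with weight $t$, Step 1, and \refprop{prop:L^2SquareFctBoundNablaP},
\[T_a\lesssim\Big(\int_a^{1/a}\int_{\mathbb{R}^n}\frac{|\nabla_{||}\mathcal{P}_tf|^2}{t}\Big)^{1/2}\Big(\int_a^{1/a}\int_{\mathbb{R}^n}t\,|\nabla_{||}W_1f|^2\Big)^{1/2}\lesssim\Vert\mathcal{A}(\nabla_{||}\mathcal{P}_tf)\Vert_{L^2}\,T_{a/2}^{1/2}\lesssim\Vert\nabla_{||}f\Vert_{L^2}\,T_{a/2}^{1/2}.\]

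\emph{Step 3: conclusion.} Iterating $T_a\lesssim\Vert\nabla_{||}f\Vert_{L^2}\,T_{a/2}^{1/2}$ and feeding in the logarithmic a priori bound $T_b\lesssim\Vert\nabla_{||}f\Vert_{L^2}^2\log(1/b)$ to absorb the surviving tail, one obtains after $k$ iterations a bound of the form $T_a\lesssim\Vert\nabla_{||}f\Vert_{L^2}^{2}\,c_k\,(\log(2^k/a))^{2^{-k}}$ with $c_k\to1$, which tends to $\Vert\nabla_{||}f\Vert_{L^2}^2$ as $k\to\infty$; hence $T_a\lesssim\Vert\nabla_{||}f\Vert_{L^2}^2$ uniformly in $a$, and letting $a\to0$ yields \eqref{eq:L2BoundAreaW1}. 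Then \eqref{eq:L2BoundAreaNablaW1} follows from Step 1. The main obstacle is making this bootstrap legitimate: the Cacciopolli step necessarily widens the region of integration, so one cannot simply divide $\Vert\mathcal{A}(W_1f)\Vert_{L^2}$ out of $T_a\lesssim\Vert\nabla_{||}f\Vert_{L^2}T_{a/2}^{1/2}$, and the logarithmic bound is only barely strong enough to close the iteration; one must also be careful that the integration by parts in Step 2 produces no boundary contribution. A more hands-on alternative that avoids the bootstrap is to imitate, for $W_1f=2tL_{||}^te^{-t^2L_{||}^t}f$, the dyadic decomposition and localization of the proof of \refprop{prop:L^2SquareFctBoundNablaP}; in that route the genuinely delicate point, exactly as there for the term $J_2^{s,Q}$, is to absorb the contribution of $\mathrm{div}_{||}(\partial_t\AP\,\nabla_{||}e^{-\tau^2L_{||}^t}f)$ produced by $\partial_t(e^{-t^2L_{||}^t})$ by means of the Carleson-measure duality \eqref{eq:DualityCarelsonNormNontangential} and the mean-valued nontangential maximal function.
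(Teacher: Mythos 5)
Your proof is correct, and after the shared opening move it takes a genuinely different route from the paper. Both arguments begin with the same tangential integration by parts, $\int_{\mathbb{R}^n}|W_1f|^2t^{-1}\,dy=-2\int_{\mathbb{R}^n}\AP\nabla_{||}\mathcal{P}_tf\cdot\nabla_{||}W_1f\,dy$, and both lean on \refprop{prop:L^2SquareFctBoundNablaP} for the $\nabla_{||}\mathcal{P}_t$ factor. The paper then uses Young's inequality with a small parameter $\sigma$ and is left with $\int_0^\infty s\Vert\nabla_{||}e^{-s^2L^s}(sL^sf)\Vert_{L^2}^2\,ds$, i.e.\ essentially $\Vert\mathcal{A}(t\nabla_{||}W_1f)\Vert_{L^2}^2$, which it estimates directly by differentiating $\Vert sL^se^{-s^2L^s}f\Vert_{L^2}^2$ in $s$: this produces a total-derivative term, a term hidden by $\sigma$, and two terms carrying $\partial_s\AP$ that are controlled via the Carleson duality \eqref{eq:DualityCarelsonNormNontangential}, \refprop{prop:PROP11}, and the previously established $L^2$ area bound for $W_2$ (\refcor{cor:AreaFctBoundsInLpW2}); in particular the paper's estimate of that term is exactly \eqref{eq:L2BoundAreaNablaW1}, so both inequalities come out of one computation. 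You instead place the weight $t$ on $\nabla_{||}W_1f$ in Cauchy--Schwarz, convert $\int t|\nabla_{||}W_1f|^2$ back into $\int|W_1f|^2/t$ over a slightly enlarged slab via the Cacciopolli inequality \eqref{lemma:w_tSqFctBound1} and a Whitney covering, and close the recursion $T_a\lesssim\Vert\nabla f\Vert_{L^2}T_{a/2}^{1/2}$ using the logarithmic a priori bound furnished by \refprop{prop:PROP11}; the iteration does close as you compute (the exponent $2^{-k}$ would in fact absorb even polynomial growth of the a priori bound, so the logarithm is more than enough), and \eqref{eq:L2BoundAreaNablaW1} then follows from the same Cacciopolli step, with no circularity since the proof of \refprop{prop:L^2SquareFctBoundNablaP} does not use the $W_1$ bound. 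What your route buys is a shorter argument in which the $L^1$ Carleson condition and the operator $W_2$ enter only through \refprop{prop:L^2SquareFctBoundNablaP}; what the paper's route buys is a bootstrap-free, direct bound on $\int_0^\infty s\Vert\nabla_{||}W_1f\Vert_{L^2}^2\,ds$ whose intermediate objects are reused elsewhere. In a final write-up you should make explicit the two points you already flag: the justification of the integration by parts at fixed $t$ (semigroup smoothing gives $\nabla_{||}L^te^{-t^2L^t}f\in L^2$, the same level of rigor as the paper's own first display) and the uniformity in $a$ of all constants in the recursion, which is what legitimizes letting $k\to\infty$ and then $a\to0$.
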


\begin{proof}
Now, let us begin with proving \eqref{eq:L2BoundAreaW1}. By ellipticity of \(A\), integration by parts, and Hölder's inequality we have
\begin{align*}
    &\int_\Omega\frac{|sL^se^{-s^2L^s}f|^2}{s}dxds 
    =-\int_0^\infty\int_{\mathbb{R}^n}\AP(x,s)\nabla e^{-s^2 L^s}f(x) \cdot \nabla L^te^{-s^2 L^s}f(x) s dx ds
    \\
    & \lesssim\int_0^\infty C_\sigma\frac{1}{s}\Vert\nabla e^{-s^2 L^s}f\Vert_{L^2(\mathbb{R}^n)}^2 +  \sigma s^3\Vert\nabla L^se^{-s^2 L^s}f\Vert_{L^2(\mathbb{R}^n)}^2  dx ds
    \\
    &\lesssim\int_0^\infty C_\sigma\frac{1}{s}\Vert\nabla e^{-s^2 L^s}f\Vert_{L^2(\mathbb{R}^n)}^2 +  \sigma s\Vert\nabla e^{-s^2 L^s}(sL^sf)\Vert_{L^2(\mathbb{R}^n)}^2  dx ds
    \\
    & =:\int_0^\infty (C_\sigma I+\sigma II) ds.
\end{align*}
Here \(\sigma\) is a small constant which later will allow us to hide the integral \(II_3\) appearing in the estimate of \(II\) on the left hand side. We can see that the term \(\int_0^\infty I ds=\Vert \mathcal{A}(\nabla_{||}\mathcal{P}_tf)\Vert_{L^2}\) and this has already been dealt with in \refprop{prop:L^2SquareFctBoundNablaP}. Hence it is enough to bound \(\int_0^\infty IIds\).
\smallskip

Similarly to the term \(J\) in the proof of \refprop{prop:L^2SquareFctBoundNablaP}, we have
\begin{align*}
&II=s\Vert\nabla e^{-s^2 L^s}(sL^s f)\Vert_{L^2(\mathbb{R}^n)}^2
\\
&\qquad=\int_{\mathbb{R}^n}L^{s}e^{-s^2 L^{s}}(sL^sf)  \cdot e^{-s^2 L^{s}}(sL^sf) t dx
\\
&\qquad=\int_{\mathbb{R}^n}\partial_s(sL^se^{-s^2 L^{t}}f) \cdot sL^se^{-t^2 L^t}f dx
\\
&\qquad=\partial_s\int_{\mathbb{R}^n}(sL^se^{-s^2 L^{s}}f)^2 dx -\int_{\mathbb{R}^n}\partial_s (sL^se^{-t^2L^s}f)|_{t=s} sL^se^{-s^2 L^s}f dx
\\
&\qquad=\partial_s\int_{\mathbb{R}^n}(sL^se^{-s^2 L^{s}}f)^2 dx -\int_{\mathbb{R}^n}s\mathrm{div}(\partial_s\AP\nabla e^{-s^2L^s}f)\cdot sL^se^{-s^2 L^s}f dx
\\
&\qquad\qquad-\int_{\mathbb{R}^n}L^s e^{-s^2L^s}f\cdot sL^se^{-s^2 L^s}f dx
\\
&\qquad\qquad- \int_{\mathbb{R}^n}sL^s\Big(\int_0^s2\tau e^{-(s^2-\tau^2) L^{s}}\mathrm{div}(\partial_s\AP\nabla e^{-\tau^2L^s}f)d\tau\Big) sL^se^{-s^2 L^s}f dx
\\
&\qquad=:II_1+II_2+II_3+II_4.
\end{align*}
First, we note that \(II_3\) can be hidden on the left hand side since the whole integral term \(II\) is multiplied by a small constant \(\sigma\) which we can choose sufficiently small. Furthermore we obtain by integration by parts for \(II_4\)
\begin{align*}
    II_4&=\int_{\mathbb{R}^n}sL^sW_2(x,s) \cdot sL^se^{-s^2 L^s}f(x) dx
    \\
    &=-\int_{\mathbb{R}^n}s\AP(x,s)\nabla_{||}W_2(x,s) \cdot \nabla_{||}sL^s e^{-s^2 L^s}f(x) dx
    \\
    &\lesssim s\Vert\nabla W_2(\cdot,s)\Vert_{L^2}^2 +  \sigma s\Vert\nabla L^se^{-s^2 L^s}f\Vert_{L^2}^2.
\end{align*}

We can hide the second term on the left hand side. The first term integrated in \(s\), i.e. \(\int_0^\infty s\Vert\nabla W_2(\cdot,s)\Vert_{L^2}^2 ds\), is the \(L^2\) area function bound \refcor{cor:AreaFctBoundsInLpW2} and hence bounded by \(\Vert \nabla f\Vert_{L^2}^2\).

Next, we have by integration by parts and \eqref{eq:DualityCarelsonNormNontangential}
\begin{align*}
    \int_0^\infty II_2 ds&=\int_0^\infty\int_{\mathbb{R}^n}s\partial_s\AP(x,s)\nabla e^{-s^2L^s}f(x)\cdot s\nabla_{||}L^se^{-s^2 L^s}f(x) dx ds
    \\
    &\lesssim \Vert \sup_{B(x,t,t/2)}|\partial_s \AP|\Vert_{\mathcal{C}}\int_{\mathbb{R}^n}\tilde{N}(|\nabla e^{-s^2L^s}f(x)||s^2\nabla_{||}L^se^{-s^2 L^s}f|)(x) dx.
\end{align*}
For the nontangential maximal function we have a pointwise bound in \(x\in\partial\Omega\) by taking the supremum over expressions of the form
\begin{align*}
    &\fint_{B(x,t,t/2)}|\nabla e^{-s^2L^s}f(x)||s^2\nabla_{||}L^se^{-s^2 L^s}f| dxds
    \\
    &\lesssim \Big(\fint_{B(x,t,t/2)}|\nabla e^{-s^2L^s}f(x)|^2dxds\Big)^{1/2}\Big(\fint_{B(x,t,t/2)}|s^2\nabla_{||}L^se^{-s^2 L^s}f|^2 dxds\Big)^{1/2}
    \\
    &\lesssim M[\nabla_{||} f]^2(x).
\end{align*}
Here, again, we used \refprop{prop:PROP11}.
\smallskip

Together, we obtain for any \(a>0\)
\begin{align*}
\int_a^\infty IIds&\lesssim\int_a^\infty II_1 ds +\Vert \nabla f\Vert_{L^2(\mathbb{R}^n)}^2 +s\Vert\sup|\partial_s \AP|\Vert_{\mathcal{C}}\Vert M[\nabla f]\Vert_{L^2(\mathbb{R}^n)}^2
\end{align*}
where the second term comes from \(II_4\). Using the Fundamental Theorem of Calculus, the first term is bounded by 
\begin{align*}
\int_a^\infty II_1 ds&=\int_a^\infty\partial_s\Big(\int_{\mathbb{R}^n}(sL^se^{-s^2 L^{s}}f)^2 dx\Big)
= \Vert aL^ae^{-a^2 L^{a}}f\Vert_{L^2(\mathbb{R}^n)}^2
\lesssim \Vert \nabla f\Vert_{L^2(\mathbb{R}^n)}^2.
\end{align*}
Here we also used \refprop{prop:PROP11}. Since all bounds are independent of \(a\), in total we get by boundedness of the Hardy-Littlewood maximal function and the \(L^1\)-Carelson condtion \eqref{cond:L1CarlesonCondOnPartialtA}, that \(\int_0^\infty IIds \lesssim \Vert \nabla f\Vert_{L^2(\mathbb{R}^n)}\).
\end{proof}

\subsection{Carleson function bound (Proof of \reflemma{lemma:CarlesonFunctionBoundW_1})}

\begin{proof}[Proof of \reflemma{lemma:CarlesonFunctionBoundW_1}]
    The proof for \eqref{item:LocalSquareBoundNablaP} and \eqref{item:LocalSquareBoundW1} work completely analogously, since we only use off-diagonal estimates (\refprop{prop:Off-diagonalEstimates}) and the \(L^2-L^2\) boundedness of the area functions (\refprop{prop:L^2SquareFctBoundNablaP} and \refprop{prop:L^2SquareFctBoundW_1}). Thus, it suffices to only present the proof of \eqref{item:LocalSquareBoundW1} in the following.
    \smallskip
    
    We introduce a smooth cut-off function \(\eta\in C_0^\infty(3\Delta)\) with \(\eta\equiv 1\) on \(2\Delta\) and \(|\nabla \eta|\lesssim \frac{1}{l(\Delta)}\) and split the data \(f\) into a local and far-away part. We obtain
    \begin{align*}
        &\int_{T(\Delta)}\frac{|W_1f(x,t)|^2}{t}dxdt=\int_{T(\Delta)}\frac{|W_1(f-(f)_{3\Delta})(x,t)|^2}{t}dxdt
        \\
        &\qquad \leq \int_{T(\Delta)}\frac{|W_1((f-(f)_{3\Delta})\eta)(x,t)|^2}{t}dxdt
        \\
        &\qquad \qquad + \int_{T(\Delta)}\frac{|W_1((f-(f)_{3\Delta})(1-\eta))(x,t)|^2}{t}dxdt.
    \end{align*}
    By \(L^2-L^2\) boundedness of the area functions (\refprop{prop:L^2SquareFctBoundNablaP} or \refprop{prop:L^2SquareFctBoundW_1}) and Poincar\'{e}'s inequality, the first integral is bounded by 
    \[\Vert\nabla( (f-(f)_{3\Delta})\eta)\Vert_{L^2(\mathbb{R}^n)}\lesssim \frac{1}{l(\Delta)}\Vert (f-(f)_{3\Delta}) \Vert_{L^2(3\Delta)} + \Vert \nabla_{||} f\Vert_{L^2(3\Delta)}\lesssim \Vert \nabla_{||} f\Vert_{L^2(3\Delta)}.\]
    Thus we only need to deal with the second integral, the away part.

    \smallskip
    We have for a fixed \(t\in (0,l(\Delta))\) with off-diagonal estimates (\refprop{prop:Off-diagonalEstimates}) for \(W_1g(x,t)=t\partial_\tau e^{-\tau L^s}g(x)|_{s=t, \tau=t^2}\)
    \begin{align*}
        &\Vert W_1 ((f-(f)_{3\Delta})(1-\eta))\Vert_{L^2(\Delta)}
        \\
        &\lesssim \Vert W_1 (\chi_{4\Delta}(f-(f)_{3\Delta})(1-\eta))\Vert_{L^2(\Delta)}+\sum_{k\geq 2}\Vert W_1 (\chi_{2^{k+1}\Delta\setminus 2^{k}\Delta}(f-(f)_{3\Delta}))\Vert_{L^2(\Delta)}
        \\
        &\lesssim \frac{1}{t}e^{-c\frac{l(\Delta)^2}{t^2}}\Vert f-(f)_{3\Delta} \Vert_{L^2(4\Delta\setminus 2\Delta)} +  \sum_{k\geq 2}\frac{1}{t}e^{-c\frac{2^{2k}l(\Delta)^2}{t^2}}\Vert f-(f)_{3\Delta}\Vert_{L^2(2^{k+1}\Delta\setminus 2^k\Delta)}.
    \end{align*}
    As in \eqref{observationf-MVonLargerDomain} or in the proof of \refprop{prop:L^2SquareFctBoundNablaP}, Poincar\'{e}'s inequality yields
    \begin{align*}\Vert f-(f)_{3\Delta}\Vert_{L^2(2^{k+1}\Delta\setminus 2^k\Delta)}&\lesssim (k+1)2^{k+1}l(\Delta)\Vert \nabla_{||}f\Vert_{L^2(\mathbb{R}^n)}
    \\
    &\lesssim 2^{(k+1)(\frac{n}{2}+2)}l(\Delta)^{\frac{n}{2}+1}\Vert \nabla_{||} f\Vert_{L^\infty(\mathbb{R}^n)}.
    \end{align*}

    Hence, we obtain
    \begin{align*}
        &\frac{1}{t}e^{-c\frac{l(\Delta)^2}{t^2}}\Vert f-(f)_{3\Delta} \Vert_{L^2(4\Delta\setminus 2\Delta)} +  \sum_{k\geq 2}\frac{1}{t}e^{-c\frac{2^{2k}l(\Delta)^2}{t^2}}\Vert f-(f)_{3\Delta}\Vert_{L^2(2^{k+1}\Delta\setminus 2^k\Delta)}
        \\
        &\lesssim \frac{l(\Delta)^{\frac{n}{2}+1}}{t}\Big(e^{-c\frac{l(\Delta)^2}{t^2}} + \sum_{k\geq 2} (k+1)2^{(k+1)(\frac{n}{2}+1)}e^{-c\frac{2^{2k}l(\Delta)^2}{t^2}}\Big)\Vert \nabla_{||} f\Vert_{\infty}
        \\
        &\lesssim \frac{l(\Delta)^{\frac{n}{2}+1}}{t}\Big(\sum_{k\geq 1} (k+1)2^{(k+1)(\frac{n}{2}+1)}e^{-c\frac{2^{2k}l(\Delta)^2}{t^2}}\Big)\Vert \nabla_{||} f\Vert_{\infty}
        \\
        &\lesssim \frac{l(\Delta)^{\frac{n}{2}+1}}{t}\Big(\sum_{k\geq 1} 2^{(k+1)(\frac{n}{2}+2)}e^{-c\frac{2^{2k}l(\Delta)^2}{t^2}}\Big)\Vert \nabla_{||} f\Vert_{\infty}.
    \end{align*}

    With this in hand, we return now to 
    \begin{align*}
    &\int_{T(\Delta)}\frac{|W_1((f-(f)_{3\Delta})(1-\eta))(x,t)|^2}{t}dxdt
    \\
    &\qquad\lesssim \int_0^{l(\Delta)}\frac{l(\Delta)^{n+2}}{t^3}\Big(\sum_{k\geq 1} 2^{(k+1)(\frac{n}{2}+2)}e^{-c\frac{2^{2k}l(\Delta)^2}{t^2}}\Big)^2\Vert \nabla_{||} f\Vert_{\infty}^2dt
    \\
    &\qquad \lesssim l(\Delta)^{n+2}\Vert \nabla_{||} f\Vert_{\infty}^2\int_0^{l(\Delta)}\Big(\sum_{k\geq 1} \frac{1}{t^{3/2}}2^{(k+1)(\frac{n}{2}+2)}e^{-c\frac{2^{2k}l(\Delta)^2}{t^2}}\Big)^2dt.
    \end{align*}

    Similar to previous arguments in \reflemma{lemma:HardyNormBoundsW_2} and \reflemma{prop:L^2SquareFctBoundNablaP}, we can study the function \(t\mapsto \frac{1}{t^{3/2}}e^{-c\frac{2^{2k}l(\Delta)^2}{t^2}}\) for \(t\in (0,l(\Delta))\) and observe that it attains its maximum if \(t=l(\Delta)\), whence
    \[\sum_{k\geq 1} \frac{1}{t^{3/2}}2^{(k+1)(\frac{n}{2}+2)}e^{-c\frac{2^{2k}l(\Delta)^2}{t^2}}\lesssim \frac{1}{l(\Delta)^{3/2}}\sum_{k\geq 1} 2^{(k+1)(\frac{n}{2}+2)}e^{-c2^{2k}}.\]
    We consider the sum as Riemann sums of the corresponding integral
    \[\int_0^\infty x^{\frac{n}{2}+1}e^{-cx^2}dx,\]
    which converges. Hence we obtain in total
    
    \begin{align*}
    &\int_{T(\Delta)}\frac{|W_1((f-(f)_{3\Delta})(1-\eta))(x,t)|^2}{t}dxdt
    \\
    &\qquad \lesssim l(\Delta)^{n+2}\Vert \nabla_{||} f\Vert_{\infty}^2\int_0^{l(\Delta)}\frac{1}{l(\Delta)^3}dt=|\Delta| \Vert \nabla_{||} f\Vert_{\infty}^2.
    \end{align*}
\end{proof}

\subsection{Hardy-Sobolev bounds (Proof of \reflemma{lemma:HardyNormBoundsW_1})}
\begin{proof}[Proof of \reflemma{lemma:HardyNormBoundsW_1}]
    First, we note that it is enough to show
    \[\Vert\mathcal{A}(\nabla_{||}\mathcal{P}_tf)\Vert_{L^1(\mathbb{R}^n)},\Vert\mathcal{A}(W_1f)\Vert_{L^1(\mathbb{R}^n)}, \Vert\mathcal{A}(t\nabla W_1f)\Vert_{L^1(\mathbb{R}^n)}\leq C\]
    for all homogeneous Hardy-Sobolev \(1/2\)-atoms \(f\) associated with \(\Delta\), whence we assume that \(f\) is such an atom going forward.
    
    We begin with showing \eqref{item:HardySquareBoundW1}. We split the integral into a local and a far away part 
    \begin{align*}
        \Vert\mathcal{A}(W_1f)\Vert_{L^1(\mathbb{R}^n)}=\Vert\mathcal{A}(W_1f)\Vert_{L^1(\mathbb{R}^n\setminus 5\Delta)} + \Vert\mathcal{A}(W_1f)\Vert_{L^1(5\Delta)}.
    \end{align*}
    For the local part we have by Hölder's inequality, \refprop{prop:L^2SquareFctBoundW_1}, and the properties of the Hardy-Sobolev space that
    \begin{align*}
        \Vert\mathcal{A}(W_1f)\Vert_{L^1(5\Delta)}\lesssim \Vert\mathcal{A}(W_1f)\Vert_{L^2(5\Delta)}|\Delta|^{1/2}\lesssim \Vert\nabla_{||} f\Vert_{L^2(\mathbb{R}^n)}|\Delta|^{1/2}\lesssim 1.
    \end{align*}
    For the away part, 
    we make use of observation \eqref{eq:CasesForL^1BoundofTermInProof}. We divide \(\mathbb{R}^n\) into annuli \(2^{j+1}\Delta\setminus 2^j\Delta\) for \(j\geq 2\). If we have \(y\in 2^{j+1}\Delta\setminus 2^j\Delta\) then
    \begin{align*}
        &\int_{\Gamma(y)}\frac{|W_1(f)|^2}{t^{n+1}}dxdt=\int_0^\infty \frac{1}{t^{n+1}}\int_{\Delta_{t/2}(y)}|tL^te^{-t^2L^t}f(x,t)|^2dx dt
        \\
        &\qquad=\int_0^{2^{j-1}l(\Delta)} \frac{e^{-2\frac{|2^{j}l(\Delta)-\frac{t}{2}|^2}{t^2}}}{t^{2n+3}}\Vert f\Vert_{L^1(\mathbb{R}^n)}^2dx dt + \int_{2^{j-1}l(\Delta)}^\infty \frac{1}{t^{2n+3}}\Vert f\Vert_{L^1(\mathbb{R}^n)}^2 dt
        \\
        &\qquad\lesssim\int_0^{2^{j-1}l(\Delta)} \frac{l(\Delta)^{2}e^{-\frac{2^{2(j-1)}l(\Delta)^2}{t^2}}}{t^{2n+3}}dx dt
        \\
        &\qquad\qquad + \int_{2^{j-1}l(\Delta)}^\infty \frac{l(\Delta)^{2}}{t^{2n+3}} dt
        \\
        &\qquad \lesssim \frac{2^{-j(2n+2)}}{l(\Delta)^{2n}}.
    \end{align*}
    Hence 
    \begin{align*}
        \Vert\mathcal{A}(W_1f)\Vert_{L^1(\mathbb{R}^n\setminus 5\Delta)}&\lesssim \sum_{j\geq 2}\int_{2^{j+1}\Delta\setminus 2^j\Delta}\Big(\int_{\Gamma(y)}\frac{|W_1(f)|^2}{t^{n+1}}dxdt\Big)^{1/2}dy
        \\
        &\lesssim \sum_{j\geq 2}\frac{2^{jn}l(\Delta)^n}{2^{j(n+1)}l(\Delta)^n}\leq C,
    \end{align*}
    and \eqref{item:HardySquareBoundW1} follows. Next, \eqref{item:HardySquareBoundNablaW1} follows from \eqref{item:HardySquareBoundW1} and \refprop{prop:CacciopolliTypeInequality}. 
    \smallskip
    
    Lastly, for \eqref{item:HardySquareBoundNablaP} we can follow the idea of part \eqref{item:HardySquareBoundW1} to split
    \begin{align*}
        \Vert\mathcal{A}(\nabla_{||}\mathcal{P}_tf)\Vert_{L^1(\mathbb{R}^n)}=\Vert\mathcal{A}(\nabla_{||}\mathcal{P}_tf)\Vert_{L^1(\mathbb{R}^n\setminus 5\Delta)} + \Vert\mathcal{A}(\nabla_{||}\mathcal{P}_tf)\Vert_{L^1(5\Delta)}
    \end{align*}
    into a local and far away part. As before, the local part is bounded with Hölder and the \(L^2-L^2\) area function bound \refprop{prop:L^2SquareFctBoundNablaP}.

    For the away part, again, we want to make use of observation \eqref{eq:CasesForL^1BoundofTermInProof}. First, we can note that \eqref{eq:CasesForL^1BoundofTermInProof} also holds true for the operator \(\frac{\mathcal{P}_t}{t}\), since it satisfies the same kernel bounds as \(W_1=tL^t\mathcal{P}_t\) (see \refprop{Prop:KernelBounds}).
    Hence on each cone \(\Gamma_\alpha(x)\), by introducing a cut-off function \(\eta\in C_0^\infty(\Gamma_{2\alpha}(x))\) with \(0\leq \eta \leq 1\), \(\eta\equiv 1\) on \(\Gamma_{\alpha}(x)\) and \(|\nabla \eta|\lesssim \frac{1}{t}\), we obtain
    \begin{align*}
    \int_{\Gamma_\alpha(x)}\frac{|\nabla_{||}\mathcal{P}_tf|^2}{t^{n+1}}dxdt&\lesssim \int_{\Gamma_{2\alpha}(x)}\frac{A\nabla_{||}\mathcal{P}_tf\cdot \nabla_{||} \mathcal{P}_t f \eta^2}{t^{n+1}}dxdt
    \\
    &\lesssim \int_{\Gamma_{2\alpha}(x)}\frac{W_1f\, \frac{\mathcal{P}_t f}{t} \eta^2}{t^{n+1}} + \frac{|\nabla_{||}\mathcal{P}_tf|\eta\, \frac{|\mathcal{P}_t f|}{t}}{t^{n+1}} dxdt.
    \end{align*}
    Using Hölder's inequality yields
    \begin{align*}\mathcal{A}_\alpha(\nabla\mathcal{P}_tf)^2(x)\lesssim \mathcal{A}_{2\alpha}(W_1 f)(x)\mathcal{A}_{2\alpha}\Big(\frac{\mathcal{P}_t}{t}\Big)(x) + \mathcal{A}_{2\alpha}(\eta\nabla \mathcal{P}_t f)(x)\mathcal{A}_{2\alpha}\Big(\frac{\mathcal{P}_t}{t}\Big)(x)
    \\
    \lesssim \mathcal{A}_{2\alpha}(W_1 f)^2(x) + \mathcal{A}_{2\alpha}\Big(\frac{\mathcal{P}_t}{t}\Big)^2(x) + \sigma\mathcal{A}_{2\alpha}(\eta\nabla \mathcal{P}_t f)^2(x) + \frac{1}{\sigma}\mathcal{A}_{2\alpha}\Big(\frac{\mathcal{P}_t}{t}\Big)^2(x)
    \end{align*}
    For a sufficiently small choice of \(\sigma>0\) we can hide the third term on the left hand side, and obtain for the far away part
    \[\Vert\mathcal{A}(\nabla\mathcal{P}_tf)\Vert_{L^1(\mathbb{R}^n\setminus 5\Delta)}\lesssim \Vert\mathcal{A}(W_1 f)\Vert_{L^1(\mathbb{R}^n\setminus 5\Delta)} + \Vert\mathcal{A}\Big(\frac{\mathcal{P}_tf}{t}\Big)\Vert_{L^1(\mathbb{R}^n\setminus 5\Delta)}\]
    Now, we can proceed with both terms as for the away part in \eqref{item:HardySquareBoundW1} which only required observation \eqref{eq:CasesForL^1BoundofTermInProof}. The increased apertures do not cause any problems, since we can just show \(\Vert\mathcal{A}_{\alpha/2}(\nabla \mathcal{P}_tf)\Vert_{L^1}\lesssim \Vert\nabla _{||}f\Vert_{\dot{HS}_{atom}^{1,2}(\mathbb{R}^n)}\) for half the aperture and then use Proposition 4.5 in \cite{milakis_harmonic_2013}. This result provides comparability of the \(L^p\)-norms of area functions with different apertures, and hence allows us to conclude \(\Vert\mathcal{A}_{\alpha}(\nabla \mathcal{P}_tf)\Vert_{L^1}\lesssim \Vert\nabla _{||}f\Vert_{\dot{HS}_{atom}^{1,2}(\mathbb{R}^n)}\) for the original aperture.
\end{proof}

\bibliographystyle{alpha}
\bibliography{references} 
\end{document}